\newtheorem{theorem}{Theorem}[section]
\newtheorem*{theorem*}{Theorem}
\newtheorem{remark}[theorem]{Remark}
\newtheorem{definition}[theorem]{Definition}
\newtheorem*{definition*}{Definition}
\newtheorem{lemma}[theorem]{Lemma}
\newtheorem*{lemma*}{Lemma}
\newtheorem{corollary}[theorem]{Corollary}
\newtheorem*{corollary*}{Corollary}
\newtheoremstyle{boldremark}
    {\dimexpr\topsep/2\relax} % space above
    {\dimexpr\topsep/2\relax} % space below
    {}          % body font
    {}          % indent amount
    {\bfseries} % theorem head font
    {.}         % punctuation after theorem head
    {.5em}      % space after theorem head
    {}          % theorem hed spec. (empty = "normal")
\theoremstyle{boldremark}
\newtheorem{assumption}[theorem]{Assumption}
\newtheorem*{assumption*}{Assumption}
\newcommand{\R}{\mathbb{R}}
\newcommand{\N}{\mathbb{N}}
\newcommand{\W}{\mathbb{W}}
\renewcommand{\P}{\mathbb{P}}
\newcommand{\E}{\mathbb{E}}
\newcommand{\A}{\mathbb{A}}
\newcommand{\cF}{\mathcal{F}}
\newcommand{\cH}{\mathcal{H}}
\newcommand{\cN}{\mathcal{N}}
\newcommand{\cP}{\mathcal{P}}
\newcommand{\cS}{\mathcal{S}}
\newcommand{\cL}{\mathcal{L}}
\newcommand{\cY}{\mathcal{Y}}
\newcommand{\refforappendixone}{\ref{appendix:General Skorokhod and Application}}
\newcommand{\refforappendixtwo}{\ref{appendix:topologies on general function spaces}}
\begin{document}

% \Header

\title{Pontryagin Maximum Principle for McKean-Vlasov Stochastic Reaction-Diffusion Equations}
\date{\today}
	
\author[1]{Johan Benedikt Spille}
\author[1]{Wilhelm Stannat}
	
\affil[1]{Technische Universität Berlin, Berlin, Germany}
	
%\affil{\small Technische Universit\"at Berlin, Berlin, Germany}
	
\maketitle

\unmarkedfntext{\textit{Mathematics Subject Classification (2020) --- 93E20, 49K45, 49N80, 60H15, 35K57} }
	
\unmarkedfntext{\textit{Keywords and phrases --- Pontryagin maximum principle, McKean-Vlasov stochastic reaction-diffusion equation, backward McKean-Vlasov stochastic partial differential equations, Lions derivative}}
	
\unmarkedfntext{\textit{Mail}: \textbullet \href{mailto:spille@math.tu-berlin.de}{spille@math.tu-berlin.de},\,\textbullet \href{mailto:stannat@math.tu-berlin.de}{stannat@math.tu-berlin.de}}

\begin{abstract}
    We consider the stochastic control of a semi-linear stochastic partial differential equations (SPDE) of McKean-Vlasov type. 
    Based on a recent novel approach to the Lions derivative for Banach space valued functions, we prove the Gateaux differentiability of the control to state map and, using adjoint calculus, we derive explicit representations of the gradient of the cost functional and a Pontryagin maximum principle. On the way, we also prove a novel existence and uniqueness result for linear McKean-Vlasov backward SPDE.\\
    Furthermore, for deterministic controls, we prove the existence of optimal controls using a martingale approach and a novel compactness method. This result is complemented in the appendix with a rigorous proof of folklore results on the compactness method in the variational approach to SPDE.\\
    Our setting uses the variational approach to SPDE with monotone coefficients, allowing for a polynomial perturbation and allowing the drift and diffusion coefficients to depend on the state, the distribution of the state and the control.
\end{abstract}

\tableofcontents

\section{Introduction}
In this paper, we are concerned with the control of the McKean-Vlasov SPDE
\begin{equation}
    \begin{aligned}
        dX^\alpha_t& 
        =LX^\alpha_t+F\left(t, X^\alpha_t, \mathcal{L}\left(X^\alpha_t\right), \alpha_t\right) d t+B\left(t, X^\alpha_t, \mathcal{L}\left(X^\alpha_t\right), \alpha_t\right) d W_t,\\
        X_0& \in L^2\left(\Omega, \mathcal{F}_0, \mathbb{P} ; H\right),
    \end{aligned} 
    \label{eq:TheBase-State-SPDE}
\end{equation}
on some separable Hilbert space $H$, where $\cL(X^\alpha_t)$ denotes the law of $X^\alpha_t$. We want to optimize with respect to the cost functional
\begin{equation}
    J: \mathbb{A} \rightarrow \mathbb{R}, \quad \alpha \mapsto \mathbb{E}\left[\int_0^T f\left(t, X^\alpha_t, \mathcal{L}\left(X^\alpha_t\right), \alpha_t\right) d t+g\left(X^\alpha_T, \mathcal{L}\left(X^\alpha_T\right)\right)\right], \label{eq:Cost_Functional}
\end{equation}
where the controls take values in the separable Hilbert space $U$ and fulfill the following integrability condition 
\begin{equation*}
    \alpha\in \mathbb{A}:=\left\{\alpha\in L^q(\Omega\times [0,T],U)\,\middle|\,\alpha\text{ progr. meas., }\int_0^T\left\| \alpha(t)\right\|_U^q d t \leq K \quad\P\text{ a.s.}\right\},
\end{equation*}
for some $K>0$ and some later specified $q\geq 2$.
Using a new extension of the Lions derivative to infinite dimensions from  \cite{vogler2024lionsderivativeinfinitedimensions}, we will develop, as our main result, a Pontryagin maximum principle in Theorem \ref{theorem:Pontryagin Maximum Principle}. On the way, as a secondary result in Theorem \ref{theorem:adjoint existence and uniqueness}, we prove existence and uniqueness for a type of backward McKean-Vlasov SPDE. Finally, as our second main result in Theorem \ref{theorem: existence of an optimal control}, we will show the existence of an optimal control, when the controls are deterministic. Precise assumptions on the coefficients will be given in section \ref{section:Preliminaries and Assumptions}. In particular, our results include the standard linear-quadratic case and allow for a semi-linear equation with polynomial perturbation.

The optimal control of SPDEs has been a topic of significant research over the past decades, motivated by applications in distributed systems, stochastic physics, and mathematical finance. When the underlying dynamics also include interactions through the law of the state, called McKean-Vlasov SPDEs, the complexity increases substantially. These equations describe the evolution of spatially distributed random fields influenced both by local randomness and by their own statistical distribution, and are thus suitable for modeling large-scale interacting systems under uncertainty.

Classical approaches to the stochastic optimal control of finite-dimensional equations are well developed, with the Pontryagin maximum principle providing necessary conditions for optimality \cite{Bismut1978,Peng1990}. Extensions to McKean-Vlasov stochastic differential equations have gained significant traction due to their relevance in mean-field games and collective behavior modeling \cite{LasryLions2007,CarmonaDelarue2018}. Optimal control in this context involves handling distributional derivatives, typically using the Lions derivative on the Wasserstein space of probability measures \cite{CarmonaDelarue2018,Buckdahn2014,PhamWei2020}.

Simultaneously, SPDEs have been rigorously studied in the context of infinite-dimensional stochastic analysis \cite{DaPrato2014,Liu2015,Pardoux2021}, and optimal control in this setting has been approached via both the dynamic programming principle and the maximum principle. The stochastic maximum principle in distributed parameter systems was originally developed in \cite{BENSOUSSAN1983}, laying the foundation for further developments in infinite-dimensional control. This variational approach has proven to be an effective tool to derive necessary conditions for optimality in the SPDE setting \cite{Fuhrman2013,Lu2021}. However, results that combine the McKean-Vlasov structure with SPDE dynamics remain limited, due to the analytical difficulties posed by the combination of infinite-dimensionality and nonlocal dependence on the distribution of the state.  \cite{Ahmed2016} used a functional derivative in a mild setting, to derive a maximum principle.  \cite{Dumitrescu2018}, similar to  \cite{Tang2019}, used a special form of the distribution dependence, to avoid explicitly needing the generalized Lions derivative.  \cite{CossoGozziFausto2023} have treated the dynamic programming approach in infinite dimension, though only for real-valued distribution dependent functions, due to the lack of a general Lions-derivative.

In this paper, we address these gaps by studying the optimal control of a class of semi-linear SPDEs of McKean-Vlasov type, where the coefficients may depend on the state, the control, and the law of the state. We employ a variational approach to SPDEs in the framework of monotone operators (cf. \cite{Liu2015}, \cite{Gao_2022}, \cite{HongHuLiu2022}). The key technical novelty is the use of a recent extension of the Lions derivative to Banach space valued functions \cite{vogler2024lionsderivativeinfinitedimensions}, which allows us to rigorously define the adjoint equation in our infinite-dimensional setting.

Our work generalizes two recent lines of research: the deterministic control of SPDEs with monotone coefficients developed in \cite{StannatWessels2021}, and the control of mean-field equations in finite dimensions developed in \cite{HocquetVogler2020}. We integrate these perspectives into a unified infinite-dimensional, mean-field control framework.

The paper is structured in the following way. 
In section \ref{section:Preliminaries and Assumptions} we introduce our setting, give a short introduction to the Lions derivative and list our assumptions needed for our results. 
In section \ref{section:Well-Posedness of the Control Problem} we show some preliminary results, that lead to the well-posedness of our problem. 
Then, in section \ref{section:Regularity of Control-to-State-Map and Cost Functional} we establish the Gâteaux differentiability of the control-to-state map and the cost functional. 
In section \ref{section:Adjoint Calculus and the Pontryagin Maximum Principle} we establish a, to our knowledge, novel existence and uniqueness result for linear backward McKean-Vlasov SPDE arising in the adjoint equation. Here, the coefficients are not Lipschitz uniformly in $\Omega$, in comparison to the results by \cite{Tang2019}. Using the solution to the adjoint equation, we derive an explicit expression for the gradient of the cost functional. This leads to a stochastic Pontryagin maximum principle for McKean-Vlasov SPDEs with distributional dependence in both drift and diffusion coefficients. 
In section \ref{section:existence of an optimal control}, we prove the existence of optimal controls in the special case, that the controls are deterministic. We do this, by adapting a compactness method from  \cite{Pardoux2021}, where it is used to prove existence of solutions to SPDE. Since this technique is attributed to an unpublished thesis by M.~Viot, we provide a complete proof of this method in the appendix for the readers convenience.
In the last section \ref{section:examples}, we show, that our results are applicable to the linear-quadratic case and give an example of a reaction-diffusion equation, where the results are applicable.
\section{Preliminaries and Assumptions}\label{section:Preliminaries and Assumptions}
    We fix a final time $T>0$ and let $H$ and $U$ be separable Hilbert spaces with inner products $\langle\cdot, \cdot\rangle_H$ and $\langle\cdot, \cdot\rangle_U$ and norms $\|\cdot\|_H$ and $\|\cdot\|_U$. Let $\left(\Omega, \mathcal{F},\{\mathcal{F}_t\}_{t>0}, \mathbb{P}\right)$ be an atomless filtered probability space, where $\cF_t$ is the augmented natural filtration generated by a cylindrical Wiener process $(W_t)$ on $H$. Further, let $V$ be a separable, reflexive Banach space with norm $\|\cdot\|_V$ such that $V \subset H \simeq H^* \subset V^*$ is a Gelfand triple. The dualization of $V^*$ and $V$ will be denoted $\langle\cdot,\cdot\rangle_V$. $\cP_2(H)$ will denote the $2$-Wasserstein space and $\W_2(\mu,\nu)=\inf_{\pi\in\Pi(\mu,\nu)} \int_{H} \|x-y\|d\pi(x,y)$ the Wasserstein-$2$-distance, where in $\Pi(\mu,\nu)$ are all the measures on $H\times H$ with marginals $\mu$ and $\nu$. Note, that here, for a bounded sequence of measures $(\mu_n)_{n\in\N}\subset \cP_2(H)$, it is equivalent, that $\mu_n$ converges weakly (in the probabilistic sense) to $\mu\in \cP_2(H)$ and $\W_2(\mu_n,\mu)\to 0$ (cf. \cite{Villani2009} Theorem 6.9).\\
    Our state equation \eqref{eq:TheBase-State-SPDE} will be seen as a variational SPDE with law dependence. We will later specify the precise notion of a solution in Definition \ref{def:Solution to SPDE}. Its coefficients are a linear, bounded operator $L:V\to V^*$ and
    \begin{equation*}
        F:[0, T] \times V \times \mathcal{P}_2(H) \times U\rightarrow V^* \quad \text{and}\quad
        B:[0, T] \times H \times \mathcal{P}_2(H) \times U\rightarrow L_2(H),
    \end{equation*}
    which for now are just measurable mappings, where $L_2(H)$ is the space of Hilbert-Schmidt operators on $H$. In the same vein, the coefficients of the cost functional \eqref{eq:Cost_Functional}
    \begin{equation*}
        f:[0, T] \times H \times \mathcal{P}_2\left(H\right) \times U \rightarrow \mathbb{R} \quad\text{and}\quad  g: H \times \mathcal{P}_2\left(H\right) \rightarrow \mathbb{R}
    \end{equation*}
    are for now measurable. Further properties will be specified later.
    For brevity sake, we will sometimes be using the shorthand notation $\theta_t=(t,X^\alpha_t,\cL(X^\alpha_t),\alpha_t)$ or versions of it, that should be specified locally. Finally, we will be using $A_i$ as constants from assumptions, $C$ as constants coming from a theorem, lemma etc. and $c_i$ as constants inside a proof, that are only uniquely determined inside this proof.
\subsection{The Lions Derivative in Infinite Dimensions} \label{subsection:Lions derivative}
    Since only a generalization of the Lions derivative to mappings with infinite dimensional co-domains from  \cite{vogler2024lionsderivativeinfinitedimensions} makes this paper possible, we will give a short introduction to the idea. For the finite dimensional case, we refer to  \cite{CarmonaDelarue2018} for a detailed introduction.\\
    Assume, we have a mapping $h:\cP_2(H)\to V^*$.
    Then, look at the so-called lift
    \begin{equation*}
        \hat{h}: L^2(\Omega,H)\to V^*, X\mapsto h(\cL(X))
    \end{equation*}
    and the Fréchet derivative $D\hat{h}(X)$ of this mapping. 
    The important fact here is, that it can be shown, that the derivative $D\hat{h}(X)$ does not depend on the chosen random variable $X$, but only on its distribution, or more precisely $D\hat{h}(X)Y$ only depends on $\cL(X,Y)$. If we had $V^*=\R$ now, we could identify $D\hat{h}(X)\in L(L^2(\Omega,\R),\R)=(L^2(\Omega, \R))^*$, so using Riesz' representation theorem, we get
    \begin{equation*}
        D\hat{h}(X)Y=\E\left[Z Y\right]
    \end{equation*}
    for some $Z\in L^2(\Omega,\R)$ and, by showing the measurability of $Z$ with respect to $\sigma(X)$, the factorization theorem gives us a function $\partial_\mu h(\mu) : \R\to\R$, which is then called the Lions derivative, such that
    \begin{equation*}
        D\hat{h}(X)Y=\E\left[\partial_\mu h(\mu)(X) Y\right].
    \end{equation*}
    This method does not work in infinite dimensions, as the Riesz representation theorem does not apply anymore. The key observation made by \cite{vogler2024lionsderivativeinfinitedimensions} is, that $\partial_\mu h(\mu)$ can also be seen, as a Radon-Nikodym derivative (or density) of $\mu_{D\hat{h}(X)}\circ X^{-1}$ with respect to $\mu=\cL(X)$, where 
    \begin{equation}
        \mu_{D\hat{h}(X)}(A)=D\hat{h}(X)\mathbf{1}_A. \label{eq:Relation vector measures and Linear Operators}
    \end{equation}
    If $V^*=\R$, this is a real-valued measure and it leads (by using the standard Radon-Nikodym theorem) to the same factorization as before. In infinite dimensions, though, we have vector measures and can use a Radon-Nikodym theorem for vector measures (cf. \cite{Ahmed2013}), to get our desired factorization
    \begin{equation}
        D\hat{h}(X)\mathbf{1}_A=\E\left[\frac{d\mu_{D\hat{h}(X)}\circ X^{-1}}{d\mu}(X)\mathbf{1}_A\right]=:\E\left[\partial_\mu h(\mu)(X) \mathbf{1}_A\right]. \label{eq:Radon Nikodym Representation of Lions derivative}
    \end{equation}
    The relation of vector measures and linear operators, as given in  \eqref{eq:Relation vector measures and Linear Operators}, has long been known and studied (cf. \cite{dinculeanu1967vector}, \cite{diestel1977vector}) and leads to a condition, such that $\partial_\mu h(\mu)(X)\in L^2(\Omega,L(H,V^*))$. This makes the extension from \eqref{eq:Radon Nikodym Representation of Lions derivative} for only indicator functions to general $L^2(\Omega, H)$-random variables possible. Namely, if we define
    \begin{equation*}
        \Lambda_2^{\mathbb{P}}(H):=\left\{S \in L\left(L^2(\Omega, H, \mathbb{P}), V^*\right)\,\middle|\, \|S\|_{2, \mathbb{P}}<\infty\right\},
    \end{equation*}
    where 
    \begin{equation*}
        \|S\|_{2, \mathrm{P}}:=\sup \left\{\sum_{i=1}^n\left\|S\left(\mathbf{1}_{A_i} x_i\right)\right\|_{V^*}\middle| Y=\sum_{i=1}^n \mathbf{1}_{A_i} x_i, A_i \text { disj., } \mathbb{E}\left[\|Y\|_H^2\right] \leq 1\right\},
    \end{equation*}
    the Fréchet derivative being in this space, is the sufficient condition. Thus, we arrive at our definition of Lions differentiability.
    \begin{definition}\label{definition:Lambda-continuously L-differentiable}
        We say that a map $h: \cP_2(H) \rightarrow V^*$ is $\Lambda$-continuously L-differentiable if the lift is continuously Fréchet differentiable, $D \hat{h}(X) \in \Lambda_2^{\mathbb{P}}(H, V^*)$ for all $X \in L^2(\Omega, H)$ and $D \hat{h}: L^2(\Omega, H) \rightarrow \Lambda_2^{\mathbb{P}}(H, V^*)$ is continuous.
    \end{definition}
    We add some measurability properties, that we will need for our coefficients of the adjoint equation to be measurable. 
    \begin{lemma}\label{lemma:joint measurability}
        If $h:V \times \cP_2(H)\to V^*$ is continuously Fréchet differentiable in its first component and $\Lambda$-continuously Fréchet differentiable in its second component, then there exists a jointly measurable version of
        \begin{equation*}
            V\times \cP_2(H)\times H\times H\to V^*,\quad (x,\mu,y,z)\mapsto \partial_\mu h(x,\mu)(y)(z).
        \end{equation*}
    \end{lemma}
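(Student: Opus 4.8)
The plan is to reduce the statement to two independent measurability problems: joint measurability in the two ``spatial'' variables $(y,z)$ for a \emph{fixed} pair $(x,\mu)$, and a measurable organisation of these versions over $(x,\mu)$. Throughout, write $E:=L(H,V^*)$, so that by the construction recalled around \eqref{eq:Radon Nikodym Representation of Lions derivative} the object $\partial_\mu h(x,\mu)(\cdot)$ is, for each $(x,\mu)$, an $E$-valued function on $H$ defined up to a $\mu$-null set, with $\partial_\mu h(x,\mu)(y)(z)\in V^*$.

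For the fixed-pair case I would argue as follows. Fix $(x,\mu)$ and pick $X\in L^2(\Omega,H)$ with $\cL(X)=\mu$; by Definition \ref{definition:Lambda-continuously L-differentiable} we have $D\hat h(x,X)\in\Lambda_2^{\mathbb P}(H,V^*)$, and by \eqref{eq:Relation vector measures and Linear Operators}--\eqref{eq:Radon Nikodym Representation of Lions derivative} the map $y\mapsto\partial_\mu h(x,\mu)(y)$ is the Radon--Nikodym density, in the vector-measure sense of \cite{Ahmed2013}, of $A\mapsto\mu_{D\hat h(x,X)}(X^{-1}A)$ with respect to $\mu$. Such a density can be taken strongly $\cB(H)$-measurable (a $\mu$-a.e. limit of $E$-valued simple functions), and composing it with the norm-continuous evaluation $E\times H\to V^*$, $(S,z)\mapsto Sz$, yields a $\cB(H)\otimes\cB(H)$-measurable version of $(y,z)\mapsto\partial_\mu h(x,\mu)(y)(z)$. (If $V^*$ is not separable one instead uses that this map is linear and bounded, hence norm-continuous, in $z$ and Borel in $y$ for each $z$, and concludes joint measurability by testing $z$ over a countable dense set.)

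For the $(x,\mu)$-dependence I would transport the problem onto a fixed probability space. Using that $(\Omega,\cF,\mathbb P)$ is atomless, fix a jointly measurable family of realizations, i.e. a Borel map $\Xi:\cP_2(H)\times\Omega\to H$ with $\cL(\Xi(\mu,\cdot))=\mu$ for all $\mu$ (for instance by transporting the one-dimensional quantile construction through a Borel isomorphism $H\cong\R$), and set $X_\mu:=\Xi(\mu,\cdot)$. Testing \eqref{eq:Radon Nikodym Representation of Lions derivative} against indicator functions, $\partial_\mu h(x,\mu)(X_\mu)(z)$ is, for each $z$, the Radon--Nikodym density with respect to $\mathbb P$ of the $V^*$-valued measure $A\mapsto D\hat h(x,X_\mu)(\mathbf 1_A z)$ on $(\Omega,\cF)$. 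Since $(x,X)\mapsto D\hat h(x,X)$ is separately continuous (hence Borel) and $\mu\mapsto X_\mu$ is Borel, this family of vector measures depends Borel-measurably on $(x,\mu,z)$, and its densities can be chosen jointly Borel in $(x,\mu,\omega,z)$ as limits of conditional ratios along a fixed refining sequence of partitions of $\Omega$. This gives a jointly measurable $\Phi(x,\mu,\omega,z)$ that equals $\partial_\mu h(x,\mu)(X_\mu(\omega))(z)$ for $\mathbb P$-a.e.\ $\omega$ and all $z$; factoring $\Phi(x,\mu,\cdot,z)$ through $X_\mu=\Xi(\mu,\cdot)$ by a parameter-dependent Doob--Dynkin argument then produces the desired jointly measurable version, which by uniqueness of the density agrees $\mu$-a.e.\ in $y$ with the one from the fixed-pair case. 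The construction adapts that of \cite{vogler2024lionsderivativeinfinitedimensions}.

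The hard part will be this last step: the regularity hypotheses only control the \emph{lifted} derivative $D\hat h$ on the fixed space $L^2(\Omega,H)$, whereas the density $\partial_\mu h(x,\mu)$ lives over the moving base measure $\mu$, so one really is forced to fix a measurably parametrized family $\Xi$ of realizations and then carry out the factorization through $\Xi$ uniformly in the parameter. Making both the infinite-dimensional construction of $\Xi$ and the parametrized Doob--Dynkin factorization rigorous, together with the strong-measurability bookkeeping needed when $V^*$ fails to be separable, is where the bulk of the work lies; the rest is routine.
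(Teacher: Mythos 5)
Your overall strategy matches the paper's: a measurable selection $\mu\mapsto X_\mu$ of representative random variables (your $\Xi$ is exactly the paper's use of the Blackwell--Dubins construction) combined with a realization of the Lions derivative as a limit of explicitly measurable ratios. The fixed-$(x,\mu)$ part of your argument and the continuity of $(x,X)\mapsto D\hat h(x,X)$ are fine. However, there is a genuine gap exactly where you say the bulk of the work lies, and you do not close it. By forming conditional ratios along a refining sequence of partitions of $\Omega$, you produce a density $\Phi(x,\mu,\omega,z)$ as a function of $\omega$, and you then need to factor it through $X_\mu$ to obtain a function of $y\in H$, \emph{jointly measurably in} $(x,\mu,y,z)$. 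The classical Doob--Dynkin lemma only gives, for each fixed $(x,\mu,z)$, \emph{some} Borel factorization, with no control on how it varies with the parameters; asserting a ``parameter-dependent Doob--Dynkin argument'' is precisely restating the lemma to be proved, not proving it.

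The paper avoids creating this factorization problem in the first place: instead of conditioning on partitions of $\Omega$, it conditions on the metric balls $\{\|X-y\|_H\le\varepsilon\}$ in the state space, setting
\begin{equation*}
  \psi_\varepsilon(x,X,y,z)=\frac{\partial_X\hat h(x,X)\bigl(z\,\mathbf 1_{\{\|X-y\|_H\le\varepsilon\}}\bigr)}{\mathbb P(\|X-y\|_H\le\varepsilon)}\,\mathbf 1_{\{\mathbb P(\|X-y\|_H\le\varepsilon)>0\}},
\end{equation*}
which is already a jointly measurable function of $(x,X,y,z)$ with $y$ ranging over $H$; the differentiation theorem then identifies the pointwise limit as $\partial_\mu h(x,\cL(X))(y)(z)$, and composing with $\mu\mapsto X^\mu$ finishes the proof. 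In effect, the only rigorous way to carry out your ``parametrized Doob--Dynkin'' step is to differentiate the pushforward measure on $H$ along shrinking balls around $y$ --- which is the paper's entire construction. To repair your proposal, replace the $\Omega$-partition ratios by the ball-conditioned ratios above (or by conditional expectations with respect to a refining sequence of countable Borel partitions of $H$ pulled back through $X_\mu$, if one wants to avoid differentiation-basis issues in infinite dimensions); the remaining steps of your argument then go through.
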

    \begin{proof}
        We proceed as  \cite{CarmonaDelarue2018} in Proposition 5.33. Due to the continuous partial Fréchet differentiability, the lift $\hat{h}:V\times L^2(\Omega,H)\to V^*$ is continuously Fréchet differentiable (\cite{Pan2023} Theorem 9.24). Thus, 
        \begin{equation*}
            V\times L^2(\Omega, H)\times L^2(\Omega,H)\to V^*,\quad (x,X,Y)\mapsto \partial_X \hat{h}(x,X)(Y)
        \end{equation*}
        is measurable (even continuous). Further, for every $\epsilon>0$,
        \begin{equation*}
            L^2(\Omega,H)\times H\times H\to L^2(\Omega,H),\quad (X,y,z)\mapsto z\mathbf{1}_{\{\|X-y\|_H\leq \epsilon\}}
        \end{equation*}
        is measurable. Thus, $\psi_\epsilon:V\times L^2(\Omega,H)\times H\times H\to V^*$ defined by
        \begin{equation*}
            \psi_\epsilon(x,X,y,z)= \frac{\partial_X \hat{h}(x,X)(z\mathbf{1}_{\{\|X-y\|_H\leq \epsilon\}})}{\mathbb{P}(\|X-y\|_H \leq \varepsilon)} \mathbf{1}_{\{\mathbb{P}(\|X-y\|_H \leq \varepsilon)>0\}}
        \end{equation*}
        is a measurable mapping. Now, by the definition of the L-derivative and Lebesgue's differentiation theorem, we see, that 
        \begin{align*}
            \psi_\epsilon(x,X,y,z)
            =&\frac{\E\left[\partial_\mu h(x,\cL(X))(X)(z\mathbf{1}_{\{\|X-y\|_H\leq \epsilon\}})\right]}{\mathbb{P}(\|X-y\|_H \leq \varepsilon)} \mathbf{1}_{\{\mathbb{P}(\|X-y\|_H \leq \varepsilon)>0\}}\\
            = &\frac{\int_{\{v\in H:\|v-y\|_H\leq \epsilon\}}\partial_\mu h(x,\cL(X))(v)(z) d\cL(X)(v)}{\cL(X)(\{e\in H:\|e-y\|_H \leq \varepsilon\})} \mathbf{1}_{\{\mathbb{P}(\|X-y\|_H \leq \varepsilon)>0\}}\\
            \to &\partial_\mu h(x,\cL(X))(h)(z).
        \end{align*}
        Using the result from  \cite{BlackwellDubinsSkorokhodExstension1983}, that gives the existence of a jointly measurable 
        \begin{equation*}
            \varphi:[0,1)\times \cP_2(H) \to H,
        \end{equation*}
        such that $\varphi(\cdot,\mu)\sim \mu$, we get a measurable map $\cP_2(H)\to L^2(\Omega,H),\mu\mapsto X^\mu$ such that $\cL(X^\mu)=\mu$, which together with the above gives the result.
    \end{proof}
\subsection{Assumptions}\label{subsection:Assumptions}
    We need a number of assumptions for our results. For the readers convenience, we list them here for later reference. 
    \begin{remark}
        The first three assumptions \ref{assumption:Linear operator}, \ref{Assumption:Standard Assumptions for Existence and Uniqueness of State Equation} and \ref{Assumption:f and g are bounded below and quadratically} are important for the well-posedness of the control problem and will be assumed throughout the whole paper. They give strong existence and uniqueness of a solution to the state equation, an a-priori bound on this solution, Lipschitz continuity of the control-to-state-map and finiteness of the control problem.
        Assumptions \ref{Assumptions:DifferentiabilityOfCoefficients}, \ref{Assumption: f,g diff. and moment bounds} and \ref{Assumption:Convexity of F,B,f} are used for the Pontryagin maximum principle, but will not be needed for the existence result. Assumption \ref{Assumptions:DifferentiabilityOfCoefficients} mainly leads to the Gateaux differentiability of the cost functional and together with Assumption \ref{Assumption: f,g diff. and moment bounds} to the existence and uniqueness of a solution to the adjoint equation, while Assumption \ref{Assumption:Convexity of F,B,f} makes the Hamiltonian convex to allow for the maximum principle.\\
        Later on, Assumption \ref{assumption:V compactly in H} and \ref{assumption:Condition for existence of controls} will be used to show the existence of an optimal (deterministic) control but are not needed for the Pontryagin maximum principle.
    \end{remark}
    \begin{assumption}
    \label{assumption:Linear operator}
        Let $L:H\supset D(H)\to H$ be a linear, densely defined, self-adjoint operator with discrete spectrum, such that there exists $l_1,l_2>0$ such that for all $v\in V$
        \begin{equation*}
            \langle Lv, v\rangle_V \leq l_1\|v\|^2_H-l_2\|v\|^2_V.
        \end{equation*}
    \end{assumption}
    \begin{assumption}
    \label{Assumption:Standard Assumptions for Existence and Uniqueness of State Equation}
        There are constants, $A_1\geq 0$, $\delta>0$, $p\geq 0$ and a
        $c_\cdot \in L^1([0,T],\R)$ such that $q>p+2$, $X_0\in L^q(\Omega,H)$ and the following conditions hold for all $t \in[0, T]$.
        \begin{enumerate}[(H1)]
            \item[(H1)] (Demicontinuity)  For all $\alpha\in U$ and $v \in V$, the map
            \begin{equation*}
                V \times \mathcal{P}_2(H) \ni(u, \mu) \mapsto \langle F(t, u, \mu,\alpha), v\rangle_V
            \end{equation*}
            is continuous. 
            \item[(H2)] (Coercivity) For all $u \in V$, $\mu \in \mathcal{P}_2(H)$ and $\alpha\in U$
            \begin{equation*}
                2\langle Lu+ F(t, u, \mu,\alpha), u\rangle_V
                \leq A_1\left(\|u\|_H^2+\mu\left(\|\cdot\|_H^2\right)+\|\alpha\|_U^2\right)-\delta\|u\|_V^2+c_t.
            \end{equation*}
            \item[(H3a)] (Monotonicity) For all $u, v \in V$, $\mu, \nu \in \mathcal{P}_2(H)$, $\alpha,\beta\in U$ and $t\in[0,T]$
            \begin{align*}
                 &2\langle L(u-v)+F(t, u, \mu,\alpha)-F(t, v, \nu,\beta), u-v\rangle_V\\
                \leq & A_1(\|u-v\|_H^2+\W_2(\mu, \nu)^2 +\|\alpha-\beta\|_U^2)
                -\delta\|u-v\|_V^2.
            \end{align*}
            \item[(H3b)] (Lipschitz-continuity of diffusion) For all $u, v \in V$, $\mu, \nu \in \mathcal{P}_2(H)$, $\alpha,\beta\in U$
            \begin{equation*}
                \|B(t, u, \mu,\alpha)-B(t, v, \nu,\beta)\|_{L_2(H)}^2
                \leq A_1(\|u-v\|_H^2+\W_2(\mu, \nu)^2 +\|\alpha-\beta|_U^2).
            \end{equation*}
            \item[(H4)] (Growth) For all $u \in V, \mu \in \mathcal{P}_2(H)$, $\alpha\in U$ and $t \in[0, T]$,
            \begin{align*}
                \|F(t, u, \mu,\alpha)\|_{V^*}^{2} 
                & \leq (c_t+\|\alpha\|_U^2+\|u\|_V^2+\mu(\|\cdot\|_H^2))(A_1+\|u\|_H^p+\mu(\|\cdot\|_H^2)).
            \end{align*}
        \end{enumerate}
    \end{assumption}
    \begin{assumption}
    \label{Assumption:f and g are bounded below and quadratically}
        \begin{enumerate}[(i)]
            \item $f$ and $g$ are lower bounded and 
            \item there exists $A_2\geq 0$ such that for all $t \in[0, T]$, $x \in H, \alpha \in U$ and $\mu \in \mathcal{P}_q\left(H\right)$
            \begin{align*}
                |f(t, x, \mu, \alpha)| & \leq A_2(1+\|x\|_H^q+\mu(\|\cdot\|_H^q)+\|\alpha\|_U^q) \\
                |g(x, \mu)| & \leq A_2(1+\|x\|_H^q+\mu(\|\cdot\|_H^q)).
            \end{align*}
        \end{enumerate}
    \end{assumption}\noindent
    We already introduced the notation for the $L$-derivative and the lift.
    Further, we write 
    $\hat{F}_x(t,x,\hat{Y},\alpha)=\frac{\partial \hat{F}(t,x,\hat{Y},\alpha)}{\partial x},$ 
    $\hat{F}_\alpha(t,x,\hat{Y},\alpha)=\frac{\partial \hat{F}(t,x,\hat{Y},\alpha)}{\partial \alpha}$, 
    $\hat{F}_Y(t,x,\hat{Y},\alpha)=\frac{\partial \hat{F}(t,x,\hat{Y},\alpha)}{\partial \hat{Y}}$, 
    where $\frac{\partial}{\partial \cdot} $ is a Fréchet derivative in the respective argument, and similarly for $B$, $f$, $g$.
    \begin{assumption}
    \label{Assumptions:DifferentiabilityOfCoefficients} 
        The coefficients $F$ and $B$ are continuously Fréchet differentiable with respect to $(x, \alpha)$ and $\Lambda$-continuously $\mathrm{L}$-differentiable with respect to $\mu \in \mathcal{P}_2\left(H\right)$. Furthermore there exist $\delta^\prime >0$, $p^\prime\geq 0$, $A_3\geq 0$ and a
        $c^\prime_\cdot \in L^1([0,T],\R)$ such that $q>p^\prime+2$ and the following conditions hold for all $t\in[0,T]$.
        \begin{itemize}
            \item[(H2$^\prime$) (Coercivity)] For every $(x, \mu,X, \alpha) \in V \times \mathcal{P}_q\left(H\right) \times L^q\left(\Omega,H\right)\times U$, and every $(z,Z,\beta)\in V\times L^2\left(\Omega,H\right)\times U$
            \begin{align*}
                &2\langle Lz+ F_x(t, x, \mu, \alpha)(z) 
                +\hat{F}_Y(t, x, X, \alpha)(Z), z\rangle_V \\
                \leq& A_3(\|z\|_H^2+\|x\|_H^2+ \|\alpha\|_U^2+\E\left[\|Z\|_H^2+\|X\|_H^2\right]+\mu(\|\cdot\|_H^2))+ c^\prime_t-\delta^\prime \|z\|^{2}_V 
            \end{align*}
            and
            \begin{align*}
                2\langle F_\alpha(t, x, \mu, \alpha)(\beta), z\rangle_V
                \leq A_3(\|z\|_H^2+\|x\|_H^2+ \|\alpha\|_U^2+\|\beta\|_U^2+\mu(\|\cdot\|_H^2))+ c^\prime_t.
            \end{align*}
            \item[(H3$^\prime$)] (Monotonicity) For every $(x, \mu,X, \alpha) \in V \times \mathcal{P}_q\left(H\right) \times L^q\left(\Omega,H\right)\times U$, and every $z_1,z_2\in V$ and $Z_1,Z_2\in  L^2\left(\Omega,H\right)$
            \begin{align*}
                &2\langle (L+ F_x(t, x, \mu, \alpha)) (z_1-z_2),z_1-z_2\rangle_V 
                \leq A_3 \left\| z_1-z_2 \right\|^2_H -\delta^\prime \|z_1-z_2\|_V^2
            \end{align*}
            and
            \begin{align*}
                &2\langle \hat{F}_Y(t,x,X,\alpha)(Z_1-Z_2),z_1-z_2\rangle_V \\
                \leq& A_3 \left(1+\E\left[\|X\|_H^q\right]\right)\left(\E\left[\|Z_1-Z_2\|^2_H\right]\right)+A_3\left\| z_1-z_2 \right\|^2_H -\delta^\prime \|z_1-z_2\|_V^2
            \end{align*}
            \item[(H4$^\prime$)] (Growth) For every $(x, \mu,X, \alpha) \in V \times \mathcal{P}_q\left(H\right) \times L^q\left(\Omega,H\right)\times U$, and every $(z,Z,\beta)\in V\times L^2\left(\Omega,H\right)\times U$:
            \begin{align*}
                \|F_x(t, x, \mu, \alpha)(z)\|^2_{V^*} 
                \leq &  (c^\prime_t+\|\alpha\|_U^2+\|x\|_V^2+\mu(\|\cdot\|_H^2))\\
                \cdot & (A_3+\|x\|_H^{p^\prime}+\mu(\|\cdot\|_H^2))
                +(c^\prime_t+\|z\|^2_V)(A_3+\|z\|^{p^\prime}_H)\\
                \|F_\alpha(t, x, \mu, \alpha)(\beta)\|^2_{V^*} 
                \leq  & (c^\prime_t+\|\beta\|_U^2+\|\alpha\|_U^2+\|x\|_V^2+\mu(\|\cdot\|_H^2))\\
                \cdot&(A_3+\|x\|_H^{p^\prime}+\mu(\|\cdot\|_H^2))\\
                \|\hat{F}_Y(t, x, X, \alpha)(Z)\|^2_{V^*} 
                \leq & (c^\prime_t+\|\alpha\|_U^2+\|x\|_V^2+\E[\|X\|_H^2])\\
                \cdot & (A_3+\|x\|_H^{p^\prime}+\E[\|Z\|_H^2]).
            \end{align*} 
        \end{itemize}
    \end{assumption}    
    \begin{lemma}\label{Lemma:Boundedness of Diffusion derivatives}
        If Assumption \ref{Assumption:Standard Assumptions for Existence and Uniqueness of State Equation} and \ref{Assumptions:DifferentiabilityOfCoefficients} hold, then there is a constant $A_4\geq 0$ such that for every $(x, \mu,X, \alpha) \in V \times \mathcal{P}_q\left(H\right) \times L^q\left(\Omega,H\right)\times U$,
        \begin{align}
            \left\|B_x(t, x, \mu, \alpha)\right\|_{L(H,L_2(H))} & \leq A_4 \label{eq:Boundedness of Diffusion x-derivative}\\
            \left\|B_\alpha(t, x, \mu, \alpha)\right\|_{L(A,L_2(H))} & \leq A_4 \label{eq:Boundedness of Diffusion alpha-derivative}\\
            \left\|\hat{B}_Y(t, x, X, \alpha)\right\|_{L(L^2(\Omega,H),L_2(H))} & \leq A_4
            . \label{eq:Boundedness of Diffusion Y-derivative}
        \end{align}
    \end{lemma}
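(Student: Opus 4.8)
The plan is to derive all three bounds from one elementary principle: a Fréchet-differentiable map between Banach spaces which is globally Lipschitz has derivative bounded in operator norm by its Lipschitz constant. The Lipschitz continuity will come from (H3b) in Assumption~\ref{Assumption:Standard Assumptions for Existence and Uniqueness of State Equation}, and the differentiability from Assumption~\ref{Assumptions:DifferentiabilityOfCoefficients}; it will turn out that $A_4=\sqrt{A_1}$ works.

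For \eqref{eq:Boundedness of Diffusion x-derivative}, fix $t\in[0,T]$, $\mu\in\cP_q(H)$ and $\alpha\in U$. Applying (H3b) with $\mu=\nu$ and $\alpha=\beta$ shows that $x\mapsto B(t,x,\mu,\alpha)$ is Lipschitz from $H$ to $L_2(H)$ with constant $\sqrt{A_1}$, uniformly in $(t,\mu,\alpha)$. Since by Assumption~\ref{Assumptions:DifferentiabilityOfCoefficients} this map is Fréchet differentiable, for every $h\in H$ one has
\[
B_x(t,x,\mu,\alpha)h=\lim_{s\to 0}\frac{1}{s}\bigl(B(t,x+sh,\mu,\alpha)-B(t,x,\mu,\alpha)\bigr)\quad\text{in }L_2(H),
\]
so that $\|B_x(t,x,\mu,\alpha)h\|_{L_2(H)}\le \sqrt{A_1}\,\|h\|_H$, and taking the supremum over $\|h\|_H\le 1$ gives \eqref{eq:Boundedness of Diffusion x-derivative}. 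The argument for \eqref{eq:Boundedness of Diffusion alpha-derivative} is verbatim the same, with the role of $x\in H$ played by $\alpha\in U$.

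For \eqref{eq:Boundedness of Diffusion Y-derivative}, I would first observe that for $Y_1,Y_2\in L^2(\Omega,H)$ the joint law $\cL(Y_1,Y_2)$ is a coupling of $\cL(Y_1)$ and $\cL(Y_2)$, hence $\W_2(\cL(Y_1),\cL(Y_2))^2\le \E[\|Y_1-Y_2\|_H^2]$. Combining this with (H3b) (taking $u=v$ and $\alpha=\beta$) shows that the lift $Y\mapsto \hat B(t,x,Y,\alpha)=B(t,x,\cL(Y),\alpha)$ is Lipschitz from $L^2(\Omega,H)$ to $L_2(H)$ with constant $\sqrt{A_1}$. Since $B$ is $\Lambda$-continuously L-differentiable in $\mu$ by Assumption~\ref{Assumptions:DifferentiabilityOfCoefficients}, this lift is in particular Fréchet differentiable, and the same limiting argument as above yields $\|\hat B_Y(t,x,X,\alpha)\|_{L(L^2(\Omega,H),L_2(H))}\le \sqrt{A_1}$. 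Setting $A_4:=\sqrt{A_1}$ completes the proof.

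I do not expect a genuine obstacle here; the only points deserving a moment's care are that the quantity to be bounded is an ordinary operator norm (so that the finer $\Lambda_2^{\mathbb{P}}$-structure and Riesz-type representations play no role), and the elementary coupling inequality $\W_2(\cL(Y_1),\cL(Y_2))\le (\E[\|Y_1-Y_2\|_H^2])^{1/2}$ which bridges the distributional Lipschitz bound in (H3b) and the Fréchet derivative of the lift.
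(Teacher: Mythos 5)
Your proof is correct and is exactly the argument the paper intends: the paper's own proof is a one-line remark that the bounds follow from the Lipschitz continuity in (H3b) and the Fréchet differentiability from Assumption \ref{Assumptions:DifferentiabilityOfCoefficients}, which is precisely what you carry out (including the coupling inequality $\W_2(\cL(Y_1),\cL(Y_2))^2\le \E[\|Y_1-Y_2\|_H^2]$ needed to transfer the Wasserstein Lipschitz bound to the lift). Your write-up simply makes explicit the details the paper leaves to the reader.
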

    \begin{proof}
        This is clear, by the Lipschitz continuity of $B$ from \ref{Assumption:Standard Assumptions for Existence and Uniqueness of State Equation} (H3b) and the Fréchet differentiability of $B$ from Assumption \ref{Assumptions:DifferentiabilityOfCoefficients}.
    \end{proof}
    \begin{assumption}
    % [Existence/Uniqueness]  
    \label{Assumption: f,g diff. and moment bounds}
        The coefficients $f$ and $g$ are continuously Fréchet differentiable with respect to $(x, \alpha)$ and $\Lambda$-continuously $\mathrm{L}$-differentiable with respect to $\mu \in \mathcal{P}_2\left(H\right)$ and there exists $A_5\geq 0$ such that for every $(x, \mu,\alpha) \in V \times \mathcal{P}_q\left(H\right) \times U$, $t\in[0,T]$ and $\mu$-almost every $y\in H$
        \begin{align*}
            \|f_x(t, x, \mu, \alpha)\|^2_H & \leq A_5\left(1+\|x\|_H^q+\mu\left(\|\cdot\|_H^q\right)+\|\alpha\|_U^q\right) \\
            \|f_\alpha(t, x, \mu, \alpha)\|^2_U & \leq A_5\left(1+\|x\|_H^q+\mu\left(\|\cdot\|_H^q\right)+\|\alpha\|_U^q\right) \\
            \|\partial_\mu f(t, x, \mu, \alpha)(y)\|^2_H & \leq A_5\left(1+\|x\|_H^q+\mu\left(\|\cdot\|_H^q\right)+\|\alpha\|_U^q+\|y\|^q_H\right) \\
            \|g_x(x, \mu)\|^2_H & \leq A_5\left(1+\|x\|_H^q+\mu\left(\|\cdot\|_H^q\right)\right)\\
            \|\partial_\mu g(x, \mu)(y)\|^2_H & \leq A_5\left(1+\|x\|_H^q+\mu\left(\|\cdot\|_H^q\right)+\|y\|^q_H\right).
        \end{align*}
    \end{assumption}
    \begin{assumption}
    \label{Assumption:Convexity of F,B,f}
        For any $t \in[0, T]$, $x \in V$ and $\mu \in \mathcal{P}_2\left(H\right)$, the functions $U \rightarrow V^* \times L_2(H) \times \mathbb{R}, \alpha \mapsto(F, B,f)(t, x, \mu, \alpha)$ are convex.
    \end{assumption}
    \begin{remark}
        One can restrict the co-domain of the controls further, to some convex (and bounded) $U_1\subset U$. In this case, the assumptions only have to hold for $\alpha,\beta\in U_1$, which might in some cases be the only way to make these work. Further, if one can show, that the state equation only takes values in a certain $H_1\subset H$, then it is sufficient to only make assumptions on this $H_1$, making the assumptions less restrictive. We have omitted these technicalities to make the results more readable, but they can be added without issues.
    \end{remark}
\section{Well-Posedness of the Control Problem}\label{section:Well-Posedness of the Control Problem}
    First of all, we need to show, that a solution to the state equation \eqref{eq:TheBase-State-SPDE} exists and is unique (in the strong sense) for every control $\alpha\in\A$. We define a solution in the following way. 
    \begin{definition} \label{def:Solution to SPDE}
        A continuous $H$-valued $\left(\mathcal{F}_t\right)_{t \geq 0}$-adapted process $\{X_t\}_{t \in[0, T]}$ is a solution of Eq. \eqref{eq:TheBase-State-SPDE}, if for its $\mathbb{P}\otimes dt$-equivalence class $\hat{X}$ and $\bar{X}$ a $V$-valued progressively measurable $d t \otimes \mathbb{P}$-version of $\hat{X}$, it holds
        \begin{equation*}
            \hat{X} \in L^2([0, T] \times \Omega, d t \times \mathbb{P} ; V) \cap L^2([0, T] \times \Omega, d t \times \mathbb{P} ; H),
        \end{equation*}
        and $\mathbb{P}$-a.s. for all $t \in[0, T]$
        \begin{equation*}
            X_t=X_0+\int_0^t L\bar{X}_s+F\left(s, \bar{X}_s, \cL(\bar{X}_s),\alpha_s\right) d s+\int_0^t B\left(s, \bar{X}_s, \cL(\bar{X}_s),\alpha_s\right) d W_s.
        \end{equation*}
    \end{definition}\noindent
    From now on, we will write $X^\alpha$ for a solution to \eqref{eq:TheBase-State-SPDE} to emphasize the dependence on the control $\alpha$. The following theorem gives the existence and uniqueness of a solution.
    \begin{theorem}[\cite{HongHuLiu2022} Theorem 3.1] \label{theorem:Existence of solution (Hong2022)}
        Suppose that Assumption \ref{assumption:Linear operator} and \ref{Assumption:Standard Assumptions for Existence and Uniqueness of State Equation} hold. Then for every control $\alpha\in\A$, \eqref{eq:TheBase-State-SPDE} has a unique solution in the sense of Definition \ref{def:Solution to SPDE} and there exists $C>0$ such that
        \begin{equation*}
            \mathbb{E}\left[\sup _{s \in[0, t]}\|X^\alpha_s\|_H^2+\int_0^t\|X^\alpha_s\|_V^2 d s\right]
            \leq C\left(1+\mathbb{E}\left\|X_0\right\|_H^2\right).
        \end{equation*}
    \end{theorem}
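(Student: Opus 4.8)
The statement is cited from \cite{HongHuLiu2022}, so in the paper the proof will simply be a pointer to that reference; here I sketch the argument one would carry out to establish it within the variational framework. The plan is to decouple the McKean--Vlasov dependence through a fixed-point iteration over measure flows, and for each frozen flow to invoke the classical variational existence theory for monotone coefficients of polynomial growth.

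First I would freeze the law. Given a flow $\mu = (\mu_t)_{t\in[0,T]} \in C([0,T];\cP_2(H))$, put $F^\mu(t,u,\alpha) := F(t,u,\mu_t,\alpha)$ and $B^\mu(t,u,\alpha) := B(t,u,\mu_t,\alpha)$. Since $t \mapsto \mu_t(\|\cdot\|_H^2)$ is continuous, hence bounded, the $\mu$-dependent terms in (H1)--(H4) of Assumption \ref{Assumption:Standard Assumptions for Existence and Uniqueness of State Equation} can be absorbed into the inhomogeneity $c_\cdot$ (still in $L^1([0,T])$) and the constants, so the frozen equation $dX_t = (LX_t + F^\mu(t,X_t,\alpha_t))\,dt + B^\mu(t,X_t,\alpha_t)\,dW_t$ is a standard variational SPDE with demicontinuous, coercive, monotone coefficients of polynomial growth of order $p$. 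For it I would run the Galerkin scheme: project onto the eigenbasis of $L$, solve the finite-dimensional SDEs, derive a priori bounds in $L^q(\Omega;C([0,T];H)) \cap L^2(\Omega\times[0,T];V)$ from the Itô formula, (H2) and the Burkholder--Davis--Gundy inequality --- here the condition $q > p+2$ together with $X_0 \in L^q(\Omega,H)$ is exactly what keeps the superlinear $V^*$-growth in (H4) integrable --- then extract weak and weak-$*$ limits and identify the nonlinearity by the Minty--Browder monotonicity argument built on (H3a) and (H3b). This produces the unique solution $X^\mu$ in the sense of Definition \ref{def:Solution to SPDE} and, along the way, the a priori estimate
\begin{equation*}
    \E\Big[\sup_{s\in[0,t]}\|X^\mu_s\|_H^2 + \int_0^t \|X^\mu_s\|_V^2\,ds\Big] \leq C\Big(1 + \E\|X_0\|_H^2 + \int_0^T \mu_s(\|\cdot\|_H^2)\,ds\Big),
\end{equation*}
where the control contribution $\E\int_0^T\|\alpha_s\|_U^2\,ds$ is bounded via the definition of $\A$.

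Next I would close the loop by a contraction. Setting $\Phi(\mu)_t := \cL(X^\mu_t)$, the Itô formula applied to $\|X^\mu_t - X^\nu_t\|_H^2$, combined with the monotonicity (H3a) and the Lipschitz bound (H3b), gives $\frac{d}{dt}\,\E\|X^\mu_t - X^\nu_t\|_H^2 \le A_1\,\E\|X^\mu_t - X^\nu_t\|_H^2 + c\,\W_2(\mu_t,\nu_t)^2$; by Gronwall and $\W_2(\cL(Y),\cL(Z))^2 \le \E\|Y-Z\|_H^2$ this yields $\sup_{t\le T_0}\W_2(\Phi(\mu)_t,\Phi(\nu)_t)^2 \le C\,T_0\,e^{CT_0}\,\sup_{t\le T_0}\W_2(\mu_t,\nu_t)^2$, which is a contraction on $[0,T_0]$ for $T_0$ small enough (with $T_0$ depending only on the structural constants). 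The a priori estimate, which also propagates the $L^q$-moment of the state, lets one restart the argument on $[T_0,2T_0]$, and so on, covering $[0,T]$ in finitely many steps and producing a fixed point, i.e.\ a solution of \eqref{eq:TheBase-State-SPDE} with $\mu_t = \cL(X_t)$. Inserting $\mu_s(\|\cdot\|_H^2) = \E\|X_s\|_H^2$ into the displayed estimate and applying Gronwall once more gives the claimed bound, and uniqueness follows from the same monotonicity computation.

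The hard part will be the interaction between the polynomial perturbation and the measure dependence: one must ensure that the Galerkin a priori bounds, and in particular the higher $L^q$-moments of the state that give meaning to $F(s,X_s,\cL(X_s),\alpha_s) \in V^*$, hold uniformly along the fixed-point iteration. This is precisely what forces the careful bookkeeping of all the $\mu(\|\cdot\|_H^2)$ and $\mu(\|\cdot\|_H^q)$ terms and the moment conditions $q>p+2$ and $X_0\in L^q(\Omega,H)$ built into Assumption \ref{Assumption:Standard Assumptions for Existence and Uniqueness of State Equation}.
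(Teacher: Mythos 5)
The paper gives no proof of this statement at all: it is imported verbatim as Theorem~3.1 of \cite{HongHuLiu2022}, so the only ``proof'' in the paper is the citation, exactly as you anticipated. Your sketch (freeze the measure flow, solve the resulting monotone variational SPDE by Galerkin and Minty--Browder, then close with a Wasserstein contraction via It\^o, (H3a)--(H3b) and Gronwall, restarting on short subintervals) is the standard and correct route to this result and matches the strategy of the cited reference, including the correct identification of where $q>p+2$ and $X_0\in L^q(\Omega,H)$ are needed.
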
\noindent
    We get a stronger a priori estimate by using the higher integrability of the controls.
    \begin{lemma}\label{Lemma:a priori-Estimate}
        If Assumption \ref{assumption:Linear operator} and \ref{Assumption:Standard Assumptions for Existence and Uniqueness of State Equation} hold, there is a constant $C$ such that
        \begin{equation*}
            \mathbb{E}\left[\sup _{t \in[0, T]}\left\|X^\alpha_t\right\|_{H}^q+\left(\int_0^T\left\|X^\alpha_t\right\|_{V}^2 \mathrm{~d} t\right)^{q/2}\right] \leq C \E\left[1+\int_0^T\|\alpha(t)\|_U^q \mathrm{~d} t\right]
            \leq C(1+K).
        \end{equation*}
    \end{lemma}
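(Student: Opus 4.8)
The plan is to apply Itô's formula to $t\mapsto\|X^\alpha_t\|_H^q=(\|X^\alpha_t\|_H^2)^{q/2}$ and then use the Burkholder--Davis--Gundy inequality together with a Gronwall argument. Since $X^\alpha$ is already known to be a solution (Theorem \ref{theorem:Existence of solution (Hong2022)}), the $H$-valued Itô formula of the variational framework (cf.\ \cite{Liu2015}) gives
\begin{equation*}
    \|X^\alpha_t\|_H^q=\|X_0\|_H^q+\frac{q}{2}\int_0^t\|X^\alpha_s\|_H^{q-2}\bigl(2\langle L\bar X_s+F(\theta_s),\bar X_s\rangle_V+\|B(\theta_s)\|_{L_2(H)}^2\bigr)\,ds+R_t+M_t,
\end{equation*}
where $M_t$ is the stochastic integral and $R_t$ collects the lower-order terms from $(q/2)(q/2-1)\|X^\alpha_s\|_H^{q-4}\|B(\theta_s)^*X^\alpha_s\|_H^2$. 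First I would bound the drift bracket using the coercivity estimate (H2), which gives $2\langle L\bar X_s+F(\theta_s),\bar X_s\rangle_V+\|B(\theta_s)\|_{L_2(H)}^2\le A_1(\|\bar X_s\|_H^2+\cL(\bar X_s)(\|\cdot\|_H^2)+\|\alpha_s\|_U^2)-\delta\|\bar X_s\|_V^2+c_s+\|B(\theta_s)\|_{L_2(H)}^2$; here one should absorb $\|B\|_{L_2(H)}^2$ either into the coercivity estimate or estimate it via (H3b) plus $B(t,0,\delta_0,0)$ growth, producing another term of the same form.

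The key manipulations are then standard: multiply by $\|X^\alpha_s\|_H^{q-2}$ and use Young's inequality in the form $\|X^\alpha_s\|_H^{q-2}\|\alpha_s\|_U^2\le \varepsilon\|X^\alpha_s\|_H^q+C_\varepsilon\|\alpha_s\|_U^q$ and likewise $\|X^\alpha_s\|_H^{q-2}c_s\le \varepsilon\|X^\alpha_s\|_H^q+C_\varepsilon c_s^{q/2}$, while the law term satisfies $\cL(\bar X_s)(\|\cdot\|_H^2)=\E\|X^\alpha_s\|_H^2\le\E\|X^\alpha_s\|_H^q$ by Jensen. The $-\delta\|X^\alpha_s\|_H^{q-2}\|\bar X_s\|_V^2$ term is kept with a good sign and, after taking expectations, yields $\E\bigl[(\int_0^T\|X^\alpha_s\|_V^2\,ds)^{q/2}\bigr]$ control after one more application of Hölder (turning $\int\|X\|_H^{q-2}\|X\|_V^2$ into a bound on $\sup\|X\|_H^{q-2}\cdot\int\|X\|_V^2$, then Young). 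For the martingale term, BDG gives $\E\sup_{t\le T}|M_t|\le C\,\E\bigl(\int_0^T\|X^\alpha_s\|_H^{2(q-1)}\|B(\theta_s)^*X^\alpha_s\|_H^2\,ds\bigr)^{1/2}\le C\,\E\bigl(\sup_{s\le T}\|X^\alpha_s\|_H^q\int_0^T\|X^\alpha_s\|_H^{q-2}\|B(\theta_s)\|_{L_2(H)}^2\,ds\bigr)^{1/2}$, and Young's inequality splits off a small multiple of $\E\sup_{s\le T}\|X^\alpha_s\|_H^q$ (absorbed on the left) plus $C\,\E\int_0^T\|X^\alpha_s\|_H^{q-2}\|B(\theta_s)\|_{L_2(H)}^2\,ds$, which is again handled by the $B$-growth bound. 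Collecting everything and writing $\phi(t):=\E\sup_{s\le t}\|X^\alpha_s\|_H^q$, one arrives at $\phi(t)\le C\E\|X_0\|_H^q+C\int_0^T\|c_s\|\,ds+C\,\E\int_0^T\|\alpha_s\|_U^q\,ds+C\int_0^t\phi(s)\,ds$, whence Gronwall gives the first bound; since $\alpha\in\A$ means $\int_0^T\|\alpha_s\|_U^q\,ds\le K$ a.s.\ and $X_0\in L^q(\Omega,H)$, the second bound $\le C(1+K)$ follows.

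The main obstacle I anticipate is a technical one rather than conceptual: justifying the application of Itô's formula to the non-smooth function $x\mapsto\|x\|_H^q$ at $q$ possibly non-even and controlling the behaviour near $\|X^\alpha_t\|_H=0$ (the factor $\|X^\alpha_s\|_H^{q-2}$ and especially $\|X^\alpha_s\|_H^{q-4}$ if $q<4$). The standard remedy is to work with $(\varepsilon+\|x\|_H^2)^{q/2}$, derive all estimates uniformly in $\varepsilon>0$, and pass to the limit $\varepsilon\downarrow0$ by monotone/dominated convergence; one checks that the $R_t$-term, which in the smoothed version is bounded by $C(\varepsilon+\|X_s\|_H^2)^{q/2-1}\|B(\theta_s)\|_{L_2(H)}^2\le C(\varepsilon+\|X_s\|_H^2)^{q/2-1}(1+\|X_s\|_H^2+\E\|X_s\|_H^2+\|\alpha_s\|_U^2+c_s)$, is dominated by the same types of terms already present, so it does not affect the final Gronwall inequality. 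A secondary bookkeeping point is ensuring all intermediate expectations are finite so that the absorption steps are legitimate; this is guaranteed by first running the argument with $X^\alpha$ replaced by the stopped process $X^\alpha_{t\wedge\tau_N}$ for $\tau_N=\inf\{t:\|X^\alpha_t\|_H\ge N\}$, obtaining bounds independent of $N$, and then letting $N\to\infty$ by Fatou — the a priori $L^2$-bound from Theorem \ref{theorem:Existence of solution (Hong2022)} ensures $\tau_N\to T$ a.s.
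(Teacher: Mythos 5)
Your proposal is correct in substance, but it takes a genuinely different technical route from the paper at the one point where the argument is delicate. You apply Itô's formula directly to $x\mapsto\|x\|_H^q$ (via the regularization $(\varepsilon+\|x\|_H^2)^{q/2}$ and a stopping-time localization), which forces you to justify a higher-order Itô formula in the variational framework and to control the second-order correction term. The paper instead applies the standard variational Itô formula only to $\|X^\alpha_t\|_H^2$, uses (H2) to obtain a pathwise \emph{inequality} for $\|X^\alpha_t\|_H^2$, and only then raises that inequality to the power $q/2$ before taking the supremum and expectation; the Burkholder--Davis--Gundy inequality is then applied to $\sup_t|M_t|^{q/2}$ for the original martingale $M_t=2\int_0^t\langle X_s^\alpha,B(\theta_s^\alpha)\,dW_s\rangle_H$ rather than to a new martingale with integrand $\|X\|_H^{q-2}\langle X,B\,dW\rangle_H$. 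This completely sidesteps the non-smoothness of $\|\cdot\|_H^q$, the $\varepsilon$-regularization, and the $\|X\|_H^{q-4}$ singularity you rightly worry about, at no cost in generality; your route buys nothing extra here but is also viable once the regularization and localization are carried out (and your explicit treatment of absorbing $\|B\|_{L_2(H)}^2$ via (H3b) is actually more careful than the paper's, which does this silently). Two small points to fix if you keep your route: the quadratic variation of your martingale term carries the factor $\|X_s^\alpha\|_H^{2(q-2)}$, not $\|X_s^\alpha\|_H^{2(q-1)}$ (your subsequent bound is consistent with the correct exponent, so this is a typo), and the $V$-norm half of the claim still requires, as in the paper, moving the retained $-\delta\,\|X\|_V^2$ contribution to the left-hand side and rerunning the estimate rather than extracting it from the Gronwall bound alone.
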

    \begin{proof}
        We will denote $\theta^\alpha_s=(s, X_s^\alpha, \cL(X^\alpha_s),\alpha_s)$. We have, using Itô's formula (\cite{Liu2015} Theorem 4.2.5) and (H2),
        \begin{align}
            \left\|X^\alpha_t\right\|_{H}^2
            =&\left\|X_0\right\|_H^2+\int_0^t2\langle L X_s^\alpha +F\left(\theta^\alpha_s\right), X_s^\alpha\rangle_V+\|B\left(\theta^\alpha_s\right)\|_{L_2(H)}^2 d s\nonumber\\
            &+2 \int_0^t\langle X_s^\alpha, B\left(\theta^\alpha_s\right) d W_s\rangle_H\nonumber\\
            \leq& \left\|X_0\right\|_H^2+\int_0^tA_1(\|X_s^\alpha\|_H^2+ \cL(X_s^\alpha)\left(\|\cdot\|_H^2\right)+\|\alpha_s\|_U^2)-\delta\|X_s^\alpha\|_V^2+c_s d s \nonumber\\
            & +2 \int_0^t\langle X_s^\alpha, B\left(\theta^\alpha_s\right) d W_s\rangle_H \label{eq:StartingItoInequality}.
        \end{align}
        Leaving out the $\delta\|X^\alpha_s\|_V^2$-term, taking both sides to the power $q/2$, taking the supremum with respect to $t \in[0, T]$ and taking expectations yields
        \begin{align*}
            \mathbb{E}\left[\sup _{t \in[0, T]}\left\|X^\alpha_t\right\|_{H}^q\right]
            &\leq  c_1\E\left[\left\|X_0\right\|_H^q+\int_0^T \sup_{s\in[0,t]} \|X_s^\alpha\|_H^q d s + \int_0^T\|\alpha_s\|_U^q ds\right] \\
            & +c_1\left(\int_0^T|c_s| d s\right)^{q/2}  
            +c_1\E\left[ \sup _{t \in[0, T]}
            2 \left| \int_0^t\langle X_s^\alpha, B\left(\theta^\alpha_s\right) d W_s\rangle_H\right|^{q/2}\right].
        \end{align*}
        Using the Burkholder-Davis-Gundy inequality, (H3b), the Young-inequality and $q\geq 2$, we get
        \begin{align*}
            \E\left[ \sup _{t \in[0, T]}
            \left| \int_0^t\langle X_s^\alpha, B\left(\theta^\alpha_s\right) d W_s\rangle_H\right|^{\frac{q}{2}}\right]
            \leq & c_2 \mathbb{E}\left[1+\int_0^T \sup_{s\in[0,T]}\|X_s^\alpha\|_H^q+\|\alpha_s\|_U^q ds\right].
        \end{align*}
        Putting both inequalities together and using Fubini results in 
        \begin{align*}
            \mathbb{E}\left[\sup _{t \in[0, T]}\left\|X^\alpha_t\right\|_{H}^q\right]
            \leq & c_3\E\left[\left\|X_0\right\|_H^q
            + \int_0^T \sup_{s\in[0,t]} \|X_s^\alpha\|_H^q +\|\alpha_s\|_U^q ds +\left(\int_0^t|c_s| ds \right)^{q/2}\right]\\
            \leq & c_4
            +c_3\E\left[ \int_0^T \|\alpha_s\|_U^qd s
            +\int_0^T\sup_{s\in[0,t]} \|X_s^\alpha\|_H^q d s\right]
        \end{align*}
        and, since $c_4+c_3\E[\int_0^t\|\alpha_s\|_U^qd s]$ is monotone increasing in $t$, using Gronwall gives 
        \begin{equation*}
            \mathbb{E}\left[\sup _{t \in[0, T]}\left\|X^\alpha_t\right\|_{H}^q\right] \leq c_5 e^{Tc_3}\left(1 +\int_0^T\E\left[\|\alpha_s\|_U^q\right]d s\right) .
        \end{equation*}
        Pulling $-\delta\|X_s^\alpha\|_V^2$ in \eqref{eq:StartingItoInequality} to the other side, leaving out $\|X_s^\alpha\|_H^2$ on the left hand side and repeating the arguments yields the same estimation for $\E[(\int_0^T\|X^\alpha_t\|_{V}^2 \mathrm{~d} t)^{q/2}]$.
    \end{proof}\noindent
    We will now show, that the control-to-state-map is Lipschitz continuous. 
    \begin{lemma} \label{lemma:Lipschitz of Control-to-State-Map}
        If Assumption \ref{assumption:Linear operator} and \ref{Assumption:Standard Assumptions for Existence and Uniqueness of State Equation} hold, then the control-to-state-map
        \begin{equation*}
            G: L^2(\Omega\times [0, T] ,U) \supset \A\rightarrow L^2\left(\Omega,C([0,T],H)\right)\cap L^2\left(\Omega,L^2([0,T],V)\right),\quad \alpha \mapsto X^\alpha,
        \end{equation*}
        is well-defined and Lipschitz continuous from $L^2(\Omega\times [0, T],U)$ to $L^2(\Omega,C([0,T],H))$ and to $L^2(\Omega,L^2([0,T],V))$, i.e. there exists a constant $C$
        \begin{equation*}
            \E\left[\sup_{t\in[0,T]}\left\|X^\alpha_t-X^\beta_t\right\|_H^2
            +\int_0^T \left\|X^\alpha_t-X^\beta_t\right\|_V^2dt
            \right]
            \leq C \E\left[\int_0^T\left\|\alpha_t-\beta_t\right\|_U^2 \mathrm{~d} s\right].
        \end{equation*}
    \end{lemma}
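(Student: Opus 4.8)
The plan is to first observe that well-definedness of $G$ is an immediate consequence of Theorem \ref{theorem:Existence of solution (Hong2022)}: for each $\alpha \in \A$ it produces a unique solution $X^\alpha$ with $\E[\sup_{t\in[0,T]}\|X^\alpha_t\|_H^2 + \int_0^T\|X^\alpha_s\|_V^2\,ds] < \infty$, so that $X^\alpha$ indeed lies in $L^2(\Omega,C([0,T],H))\cap L^2(\Omega,L^2([0,T],V))$.

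For the quantitative estimate, I would fix $\alpha,\beta \in \A$, abbreviate $\theta^\alpha_s = (s,X^\alpha_s,\cL(X^\alpha_s),\alpha_s)$ and $\theta^\beta_s$ analogously, and set $Z_s := X^\alpha_s - X^\beta_s$. Then $Z$ solves, in the Gelfand triple $V\subset H\subset V^*$, the (random) linear equation $dZ_s = \big(LZ_s + F(\theta^\alpha_s) - F(\theta^\beta_s)\big)\,ds + \big(B(\theta^\alpha_s) - B(\theta^\beta_s)\big)\,dW_s$ with $Z_0 = 0$, and since $Z \in L^2([0,T]\times\Omega;V)$ the Itô formula of \cite{Liu2015}, Theorem 4.2.5, applies to $\|Z_s\|_H^2$:
\begin{align*}
\|Z_t\|_H^2 =& \int_0^t 2\langle LZ_s + F(\theta^\alpha_s) - F(\theta^\beta_s), Z_s\rangle_V + \|B(\theta^\alpha_s) - B(\theta^\beta_s)\|_{L_2(H)}^2\,ds \\
& + 2\int_0^t \langle Z_s, (B(\theta^\alpha_s) - B(\theta^\beta_s))\,dW_s\rangle_H.
\end{align*}
I would then apply the monotonicity assumption (H3a) to the drift term and the Lipschitz assumption (H3b) to the diffusion term, in both cases with $\mu = \cL(X^\alpha_s)$ and $\nu = \cL(X^\beta_s)$; the key point is that, $(X^\alpha_s,X^\beta_s)$ being a coupling of these two laws, one has $\W_2(\cL(X^\alpha_s),\cL(X^\beta_s))^2 \leq \E[\|Z_s\|_H^2]$, so every distributional term is controlled by $\E[\|Z_s\|_H^2]$. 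Dropping the favorable $-\delta\|Z_s\|_V^2$ term, taking expectations — which kills the stochastic integral after a routine localization based on the a priori bound of Theorem \ref{theorem:Existence of solution (Hong2022)} — and using Fubini, I expect to arrive at
\begin{equation*}
\E[\|Z_t\|_H^2] \leq c_1\int_0^t \E[\|Z_s\|_H^2]\,ds + c_1\,\E\Big[\int_0^T \|\alpha_s - \beta_s\|_U^2\,ds\Big],
\end{equation*}
and Gronwall's lemma then gives $\sup_{t\in[0,T]}\E[\|Z_t\|_H^2] \leq C\,\E[\int_0^T\|\alpha_s-\beta_s\|_U^2\,ds]$. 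Reinserting this bound on the right-hand side of the Itô identity, this time keeping $-\delta\|Z_s\|_V^2$ and moving it to the left, yields the analogous bound for $\E[\int_0^T\|Z_s\|_V^2\,ds]$.

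The last step is to upgrade $\sup_t\E[\,\cdot\,]$ to $\E[\sup_t\,\cdot\,]$. Here I would take the supremum over $t\in[0,T]$ inside the Itô identity before taking expectations; the only term requiring care is the stochastic integral, which by the Burkholder–Davis–Gundy inequality and (H3b) is bounded in expectation by
\begin{equation*}
c_2\,\E\Big[\Big(\sup_{s\in[0,T]}\|Z_s\|_H^2 \int_0^T \big(\|Z_s\|_H^2 + \E[\|Z_s\|_H^2] + \|\alpha_s-\beta_s\|_U^2\big)\,ds\Big)^{1/2}\Big].
\end{equation*}
Young's inequality absorbs $\tfrac{1}{2}\E[\sup_{s\in[0,T]}\|Z_s\|_H^2]$ into the left-hand side, and what remains is controlled by the already-established bounds on $\sup_t\E[\|Z_t\|_H^2]$ and $\E[\int_0^T\|Z_s\|_V^2\,ds]$, giving the claim.

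I do not anticipate a genuine obstacle — this is a standard energy estimate for monotone SPDE — but the points that need care are: (i) bounding the Wasserstein terms by $\E[\|Z_s\|_H^2]$ via the natural coupling, which is exactly what makes the mean-field dependence harmless; (ii) justifying the vanishing of the stochastic-integral expectation by localization, relying on the a priori estimate of Theorem \ref{theorem:Existence of solution (Hong2022)}; and (iii) the Young-type absorption of the supremum term in the BDG step, which requires pulling the factor $\|Z_s\|_H$ out as $\sup_{s\in[0,T]}\|Z_s\|_H$ before estimating the diffusion coefficient.
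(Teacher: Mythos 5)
Your proposal is correct and follows essentially the same route as the paper: Itô's formula for $\|X^\alpha_t-X^\beta_t\|_H^2$, the monotonicity (H3a) and Lipschitz (H3b) assumptions with the Wasserstein distance bounded via the natural coupling, Burkholder--Davis--Gundy plus Young to handle the stochastic integral, Gronwall, and finally reinserting the $-\delta\|\cdot\|_V^2$ term for the $V$-norm bound. The only (immaterial) difference is that you first close a Gronwall argument for $\sup_t\E[\cdot]$ and then upgrade to $\E[\sup_t\cdot]$, whereas the paper takes the supremum before the expectation in a single pass.
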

    \begin{proof}
        We again use $\theta^\alpha_s=(s, X_s^\alpha, \cL(X_s^\alpha),\alpha_s)$. Firstly, since we have a unique strong solution for every $\alpha\in\A$ by Theorem \ref{theorem:Existence of solution (Hong2022)}, the map is well-defined. Towards the second claim, we get, using Itô's formula (\cite{Liu2015} Theorem 4.2.5) and (H3a-b),
        \begin{align}
            \left\|X^\alpha_t-X^\beta_t\right\|_{H}^2
            =&\int_0^t2_{V^*}\langle L(X^\alpha_s-LX^\beta_s)+ F\left(\theta^\alpha_s\right)-F(\theta^\beta_s), X_s^\alpha-X_s^\beta\rangle_V \nonumber\\
            +&\|B(\theta^\alpha_s)-B(\theta^\beta_s)\|_{L_2(H)}^2 d s+2 \int_0^t\langle X_s^\alpha-X_s^\beta, (B(\theta^\alpha_s)-B(\theta^\beta_s)) d W_s\rangle_H \nonumber\\
            \leq&  \int_0^t A_1\left\|X_s^\alpha-X_s^\beta\right\|_H^2+A_1 \W_2\left(\cL(X_s^\alpha), \cL(X_s^\beta)\right)^2 +A_1\left\|\alpha_s-\beta_s\right\|_U^2   \label{eq:StartingItoInequalityLipschitzContinuity1}\\
            -& \delta\|X^\alpha_s-X^\beta_s\|^2_V ds +2 \int_0^t\langle X_s^\alpha-X_s^\beta, (B(\theta^\alpha_s)-B(\theta^\beta_s)) d W_s\rangle_H \label{eq:StartingItoInequalityLipschitzContinuity2}.
        \end{align}
        Leaving out $- \delta\|X^\alpha_s-X^\beta_s\|^2_V$ in \eqref{eq:StartingItoInequalityLipschitzContinuity1}-\eqref{eq:StartingItoInequalityLipschitzContinuity2}, taking the supremum with respect to $t \in[0, T]$, and then taking expectations yields
        \begin{align*}
            \mathbb{E}\left[\sup _{t \in[0, T]}\left\|X^\alpha_t-X^\beta_t\right\|_{H}^2\right]
            \leq & c_1\E\left[\int_0^T \sup_{s\in[0,t]} \|X_s^\alpha-X_s^\beta\|_H^2 d t +\int_0^T\|\alpha_s-\beta_s\|_U^2 d s\right]\\
            &+ c_1\E\left[ \sup_{t\in[0,T]}\left|\int_0^t\langle X_s^\alpha-X_s^\beta, (B(\theta^\alpha_s)-B(\theta^\beta_s)) d W_s\rangle_H\right|\right].
        \end{align*}
        Using the Burkholder-Davis-Gundy inequality, (H3b) and the Young inequality, we get
        \begin{align*}
            &\E\left[ \sup_{t\in[0,T]}\left|\int_0^t\langle X_s^\alpha-X_s^\beta, (B(\theta^\alpha_s)-B(\theta^\beta_s)) d W_s\rangle_H\right|\right]\\
            \leq & \frac{1}{2}\mathbb{E}\left[\sup_{s \in[0,T]}\left\|X_s^\alpha-X_s^\beta\right\|^2_{H}\right]
            +c_2\E\left[\int_0^T \sup_{t \in[0,s]}\left\|X^\alpha_t-X^\beta_t\right\|_H^2+\|\alpha_s-\beta_s\|_U^2 \mathrm{~d} s\right]
        \end{align*}
        Now, the Gronwall inequality gives an estimate on $\E[\sup_{t\in[0,T]}\|X^\alpha_t-X^\alpha_t\|_H^2]$. Pulling $- \delta\|X^\alpha_s-X^\beta_s\|^2_V$ in \eqref{eq:StartingItoInequalityLipschitzContinuity2} to the other side gives the result.
    \end{proof}\noindent
    Lastly, we notice, that the optimization problem is finite.
    \begin{lemma} \label{lemma:well-posedness of J}
        If Assumption \ref{assumption:Linear operator}, \ref{Assumption:Standard Assumptions for Existence and Uniqueness of State Equation} and \ref{Assumption:f and g are bounded below and quadratically} hold, then $J:\A\to \R$ is well-defined.
    \end{lemma}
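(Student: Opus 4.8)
The plan is to verify that, for each fixed $\alpha\in\A$, the random variable inside the expectation defining $J(\alpha)$ is $\P$-integrable; finiteness of $J(\alpha)\in\R$ then follows, and well-definedness (independence of the chosen version) is immediate since, by Theorem \ref{theorem:Existence of solution (Hong2022)}, $X^\alpha$ is the unique continuous $H$-valued solution. Throughout I would write $\theta^\alpha_t=(t,X^\alpha_t,\cL(X^\alpha_t),\alpha_t)$.

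First I would extract from the a priori estimate exactly what Assumption \ref{Assumption:f and g are bounded below and quadratically}(ii) needs in order to be applicable along the state process. Lemma \ref{Lemma:a priori-Estimate} gives $\E\bigl[\sup_{t\in[0,T]}\|X^\alpha_t\|_H^q\bigr]\le C(1+K)$. In particular $\E[\|X^\alpha_t\|_H^q]<\infty$ for every $t\in[0,T]$, hence $\cL(X^\alpha_t)\in\cP_q(H)$ with $\cL(X^\alpha_t)(\|\cdot\|_H^q)=\E[\|X^\alpha_t\|_H^q]\le C(1+K)$, and moreover $\E\bigl[\int_0^T\|X^\alpha_t\|_H^q\,dt\bigr]\le TC(1+K)$. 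Thus the moment condition in Assumption \ref{Assumption:f and g are bounded below and quadratically}(ii) may legitimately be evaluated at $x=X^\alpha_t$, $\mu=\cL(X^\alpha_t)$ and control $\alpha_t$.

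Doing so pointwise in $(\omega,t)$ yields
\begin{equation*}
    |f(\theta^\alpha_t)|\le A_2\bigl(1+\|X^\alpha_t\|_H^q+\E[\|X^\alpha_t\|_H^q]+\|\alpha_t\|_U^q\bigr),\qquad |g(X^\alpha_T,\cL(X^\alpha_T))|\le A_2\bigl(1+2\,\E[\|X^\alpha_T\|_H^q]\bigr).
\end{equation*}
Integrating the first inequality over $[0,T]$, taking expectations, applying Tonelli (the integrand being nonnegative), and using $\int_0^T\|\alpha_t\|_U^q\,dt\le K$ $\P$-a.s.\ together with the bounds from the previous step, I obtain $\E\bigl[\int_0^T|f(\theta^\alpha_t)|\,dt\bigr]\le A_2\bigl(T+2TC(1+K)+K\bigr)<\infty$, and likewise $\E\bigl[|g(X^\alpha_T,\cL(X^\alpha_T))|\bigr]\le A_2\bigl(1+2C(1+K)\bigr)<\infty$. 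Hence the integrand of $J(\alpha)$ lies in $L^1(\Omega)$ and $J(\alpha)$ is a well-defined real number. Assumption \ref{Assumption:f and g are bounded below and quadratically}(i) is not strictly needed for this finiteness, but it is recorded since it additionally makes $J$ bounded below on $\A$, which is used later.

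The only other point to address is measurability of the integrand, so that the expectation makes sense: $f$ and $g$ are measurable by hypothesis, $(t,\omega)\mapsto X^\alpha_t(\omega)$ and $(t,\omega)\mapsto\alpha_t(\omega)$ are progressively measurable, and $t\mapsto\cL(X^\alpha_t)$ is Borel measurable into $\cP_2(H)$ because $t\mapsto X^\alpha_t$ is $\P$-a.s.\ continuous in $H$ (hence $t\mapsto\cL(X^\alpha_t)$ is weakly continuous, and, being a bounded family of laws, $\W_2$-continuous by the fact cited in Section \ref{section:Preliminaries and Assumptions}); composing gives measurability of $t\mapsto f(\theta^\alpha_t)$ and of $g(X^\alpha_T,\cL(X^\alpha_T))$. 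There is no genuine obstacle in this lemma: the entire statement reduces to the $q$-th moment a priori bound of Lemma \ref{Lemma:a priori-Estimate} combined with the at-most-$q$-th-order growth of $f$ and $g$, and the only step needing a little care is confirming that $\cL(X^\alpha_t)\in\cP_q(H)$ so that Assumption \ref{Assumption:f and g are bounded below and quadratically}(ii) can be invoked.
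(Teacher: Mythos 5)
Your proposal is correct and follows essentially the same route as the paper: the growth bounds of Assumption \ref{Assumption:f and g are bounded below and quadratically}, the $q$-th moment a priori estimate of Lemma \ref{Lemma:a priori-Estimate}, and the boundedness of $\A$ together give finiteness of $J(\alpha)$. You merely spell out the measurability and the $\cP_q(H)$ membership more explicitly than the paper does, and you correctly observe that part (i) of the assumption is not strictly needed once the two-sided bound in part (ii) is available.
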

    \begin{proof}
        First, by Assumption \ref{Assumption:f and g are bounded below and quadratically} (i) $J$ is lower bounded. Secondly, by Assumption \ref{Assumption:f and g are bounded below and quadratically} (ii) the a priori-estimate in Lemma \ref{Lemma:a priori-Estimate} on $X^\alpha_t$ and the boundedness of $\mathbb{A}$, $J$ is upper bounded.
    \end{proof}

\section{Regularity of Control-to-State-Map and Cost Functional}\label{section:Regularity of Control-to-State-Map and Cost Functional}
    As we have already seen in our short introduction to the Lions derivative in section \ref{subsection:Lions derivative}, the lift of a function $h:\cP_2(H)\to H$ has a very important role. Since we assumed our probability space to be atomless, we can always lift to this very same probability space without trouble. For every $\mu\in \cP_2(H)$ we will find an $X\in L^2(\Omega,H)$, such that $\mu=\cL(X)$. We will write
    \begin{equation*}
        \hat{F}:[0,T]\times H \times L^2(\Omega,H)\times U\to V^*, (t,h,\hat{Y},\alpha)\mapsto F(t,h,\cL(\hat{Y}),\alpha)
    \end{equation*}
    for the lift of $F$ and $B,f,g$ will be treated in the same way. Also, we will adopt the notation, that if we have some random variables $X,Y,Z$ and write $\hat{X},\hat{Y}$ and $\hat{Z}$, we mean copies on the same probability space, such that $\cL(X,Y,Z)=\cL(\hat{X},\hat{Y},\hat{Z})$. Mostly, we will be using this, to make clear, that we have lifted a function, by using a copy in the argument, i.e. if $X\sim \mu\in\cP_2(H)$
    \begin{equation*}
        h(\mu)=\hat{h}(X)=:\hat{h}(\hat{X}).
    \end{equation*}
    This makes it easier to distinguish between functions, that take the whole random variable as an argument (distribution dependence) and functions, that take the random variable evaluated at some point $\omega\in\Omega$, e.g. for $X\in L^2(\Omega,H)$ with $\cL(X)=\mu$
    \begin{equation*}
        (F(t,X,\mu,\alpha))(\omega)
        % =(\hat{F}(t,X,\hat{X},\alpha))(\omega)
        =\hat{F}(t,X(\omega),\hat{X},\alpha(\omega))
        :=\hat{F}(t,X(\omega),X,\alpha(\omega)).
    \end{equation*}
    We will also adapt the notation $\theta^\alpha_t(\omega)=(t, X^\alpha_t(\omega), \cL(X^\alpha_t), \alpha_t(\omega))$ to the lifting by writing $\hat{\theta}^\alpha_t(\omega)=(t,X^\alpha_t(\omega),\hat{X}^\alpha_t,\alpha_t(\omega))$.
    \subsection{Gateaux Derivative of the Control-to-State-Map}\label{section:Gateaux Derivative of the Control-to-State-Map}
        \begin{theorem}
            If Assumptions \ref{assumption:Linear operator}, \ref{Assumption:Standard Assumptions for Existence and Uniqueness of State Equation}, \ref{Assumption:f and g are bounded below and quadratically} and \ref{Assumptions:DifferentiabilityOfCoefficients} hold, the control-to-state map
            \begin{equation*}
                G: L^2(\Omega\times [0, T] ,U) \supset \A\rightarrow L^2\left(\Omega,C([0,T],H)\right),\quad \alpha \mapsto X^\alpha,
            \end{equation*}
            is Gâteaux-differentiable and its derivative at $\alpha \in \mathbb{A}$ in direction $\beta \in \mathbb{A}$ is given by
            \begin{equation*}
                d G(\alpha) \cdot \beta=Z^{\alpha, \beta},
            \end{equation*}
            where the process $Z=Z^{\alpha, \beta}$ is characterized as the unique solution (in the sense of Definition \ref{def:Solution to SPDE}) to
            \begin{equation}
                \begin{aligned}
                d Z_t= & LZ_t+ F_x(\theta^\alpha_t) (Z_t)+F_\alpha(\theta^\alpha_t)  (\beta_t)+\hat{F}_Y(\hat{\theta}^\alpha_t)(\hat{Z}_t) d t \\
                & +B_x(\theta^\alpha_t) (Z_t)+B_\alpha(\theta^\alpha_t) (\beta_t)+\hat{B}_Y(\hat{\theta}^\alpha_t)(\hat{Z}_t) d W_t  \\
                Z_0= & 0
            \end{aligned} \label{eq:Gateaux derivative SPDE}
            \end{equation}
        \end{theorem}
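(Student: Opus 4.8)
The plan is to follow the standard route for differentiating a control-to-state map through the linearised equation. First I would verify that \eqref{eq:Gateaux derivative SPDE} is itself well posed in the sense of Definition \ref{def:Solution to SPDE}. This is a McKean--Vlasov variational SPDE that is \emph{affine} in $(Z,\cL(Z))$: its drift operator is $z\mapsto Lz+F_x(\theta^\alpha_t)(z)$, perturbed by the law-linear term $\hat Z\mapsto\hat F_Y(\hat\theta^\alpha_t)(\hat Z)$ and the inhomogeneity $F_\alpha(\theta^\alpha_t)(\beta_t)$, and analogously for the diffusion. The coercivity, monotonicity and growth conditions of Assumption \ref{Assumptions:DifferentiabilityOfCoefficients} together with Lemma \ref{Lemma:Boundedness of Diffusion derivatives} are exactly the hypotheses required of these operators, and (H4$^\prime$), Lemma \ref{Lemma:Boundedness of Diffusion derivatives}, the a priori bound of Lemma \ref{Lemma:a priori-Estimate} and the boundedness of $\A$ place the inhomogeneous terms in $L^2(\Omega\times[0,T];V^*)$, resp.\ $L^2(\Omega\times[0,T];L_2(H))$. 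Existence, uniqueness and the a priori bound $\E[\sup_t\|Z_t\|_H^2+\int_0^T\|Z_t\|_V^2\,dt]\le C\,\E[\int_0^T\|\beta_t\|_U^2\,dt]$ would then follow either by invoking the variational McKean--Vlasov theory (as in Theorem \ref{theorem:Existence of solution (Hong2022)}, now with affine coefficients) or by a Picard iteration on the marginal flow $t\mapsto\cL(Z_t)$, where freezing the marginals reduces \eqref{eq:Gateaux derivative SPDE} to a classical linear variational SPDE and the induced self-map of $C([0,T];\cP_2(H))$ is a contraction in an exponentially weighted metric.

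Next I would fix $\alpha\in\A$ and a direction $\beta$ with $\alpha+\e\beta\in\A$ for small $\e>0$ (always arranged by convexity of $\A$, replacing $\beta$ by $\beta-\alpha$ if needed), write $\Delta^\e:=X^{\alpha+\e\beta}-X^\alpha$ and $R^\e:=\e^{-1}\Delta^\e-Z$, and perform a first-order Taylor expansion. Applying the fundamental theorem of calculus to $\lambda\mapsto\hat F(t,X^\alpha_t+\lambda\Delta^\e_t,\widehat{X^\alpha_t+\lambda\Delta^\e_t},\alpha_t+\lambda\e\beta_t)$ --- legitimate by the $C^1$ / $\Lambda$-$C^1$ regularity of Assumption \ref{Assumptions:DifferentiabilityOfCoefficients} and the chain rule for the lift used in the proof of Lemma \ref{lemma:joint measurability} --- and dividing by $\e$, the increment $\e^{-1}(F(\theta^{\alpha+\e\beta}_t)-F(\theta^\alpha_t))$ equals $\int_0^1 F_x(\theta^{\e,\lambda}_t)(\e^{-1}\Delta^\e_t)+\hat F_Y(\hat\theta^{\e,\lambda}_t)(\e^{-1}\hat\Delta^\e_t)+F_\alpha(\theta^{\e,\lambda}_t)(\beta_t)\,d\lambda$, with $\theta^{\e,\lambda}_t=(t,X^\alpha_t+\lambda\Delta^\e_t,\cL(X^\alpha_t+\lambda\Delta^\e_t),\alpha_t+\lambda\e\beta_t)$, and likewise for $B$. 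Subtracting \eqref{eq:Gateaux derivative SPDE} then shows that $R^\e$ solves the linear SPDE
\begin{align*}
    dR^\e_t &= \Big[LR^\e_t+\int_0^1 F_x(\theta^{\e,\lambda}_t)(R^\e_t)+\hat F_Y(\hat\theta^{\e,\lambda}_t)(\hat R^\e_t)\,d\lambda+\mathcal{E}^{F,\e}_t\Big]\,dt\\
    &\quad+\Big[\int_0^1 B_x(\theta^{\e,\lambda}_t)(R^\e_t)+\hat B_Y(\hat\theta^{\e,\lambda}_t)(\hat R^\e_t)\,d\lambda+\mathcal{E}^{B,\e}_t\Big]\,dW_t,\qquad R^\e_0=0,
\end{align*}
where $\mathcal{E}^{F,\e}_t=\int_0^1\!\big([F_x(\theta^{\e,\lambda}_t)-F_x(\theta^\alpha_t)](Z_t)+[\hat F_Y(\hat\theta^{\e,\lambda}_t)-\hat F_Y(\hat\theta^\alpha_t)](\hat Z_t)+[F_\alpha(\theta^{\e,\lambda}_t)-F_\alpha(\theta^\alpha_t)](\beta_t)\big)\,d\lambda$ and $\mathcal{E}^{B,\e}_t$ is defined analogously.

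Then I would close an energy estimate on $R^\e$: Itô's formula for $\|R^\e_t\|_H^2$, the monotonicity bounds (H3$^\prime$) under the $\lambda$-integral to absorb the $F_x,B_x$-terms and, after taking expectations, the mean-field $\hat F_Y,\hat B_Y$-terms, Young's inequality on the cross terms with $\mathcal{E}^{F,\e},\mathcal{E}^{B,\e}$, Burkholder--Davis--Gundy on the stochastic integral, and Gronwall's lemma (using $\E\|R^\e_t\|_H^2\le\E\sup_{s\le t}\|R^\e_s\|_H^2$ to absorb the remaining mean-field contributions) yield a constant $C$ independent of $\e$ with
\begin{equation*}
    \E\Big[\sup_{t\in[0,T]}\|R^\e_t\|_H^2\Big]\le C\,\E\Big[\int_0^T\|\mathcal{E}^{F,\e}_t\|_{V^*}^2+\|\mathcal{E}^{B,\e}_t\|_{L_2(H)}^2\,dt\Big].
\end{equation*}
It then remains to send the right-hand side to $0$. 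By Lemma \ref{lemma:Lipschitz of Control-to-State-Map}, $\E[\sup_t\|\Delta^\e_t\|_H^2+\int_0^T\|\Delta^\e_t\|_V^2\,dt]\le C\e^2\E[\int_0^T\|\beta_t\|_U^2\,dt]\to0$, so along any subsequence $\e\to0$ there is a further subsequence along which, for $dt\otimes d\P$-a.e.\ $(t,\omega)$ and uniformly in $\lambda\in[0,1]$, $X^\alpha_t+\lambda\Delta^\e_t\to X^\alpha_t$ in $V$, $\alpha_t+\lambda\e\beta_t\to\alpha_t$ in $U$, and $\cL(X^\alpha_t+\lambda\Delta^\e_t)\to\cL(X^\alpha_t)$ in $\cP_2(H)$; continuity of $F_x,\hat F_Y,F_\alpha$ and $B_x,\hat B_Y,B_\alpha$ (part of Assumption \ref{Assumptions:DifferentiabilityOfCoefficients}) then forces the integrands of $\mathcal{E}^{F,\e}_t,\mathcal{E}^{B,\e}_t$ to $0$ pointwise, while the growth bounds (H4$^\prime$), Lemma \ref{Lemma:Boundedness of Diffusion derivatives}, Lemma \ref{Lemma:a priori-Estimate}, the boundedness of $\A$, and $Z\in L^2(\Omega\times[0,T];V)$, $\beta\in L^q$ supply a uniformly integrable majorant on $[0,T]\times\Omega$. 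Vitali's theorem and the subsequence principle give $\E[\int_0^T\|\mathcal{E}^{F,\e}_t\|_{V^*}^2+\|\mathcal{E}^{B,\e}_t\|_{L_2(H)}^2\,dt]\to0$, hence $\e^{-1}(X^{\alpha+\e\beta}-X^\alpha)\to Z^{\alpha,\beta}$ in $L^2(\Omega,C([0,T];H))$; linearity of \eqref{eq:Gateaux derivative SPDE} in $\beta$ and its a priori bound show $\beta\mapsto Z^{\alpha,\beta}$ is a bounded linear operator, i.e.\ $dG(\alpha)\cdot\beta=Z^{\alpha,\beta}$.

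I expect two points to be the main obstacles. The first is the well-posedness of the \emph{mean-field} linearised SPDE in the first step: because of the terms $\hat F_Y,\hat B_Y$ one cannot simply quote a monotone-operator theorem but must either extend it to affine McKean--Vlasov coefficients or carry out the marginal-flow fixed point. The second is the limit passage in the last step, where only $L^2([0,T]\times\Omega;V)$-convergence of the states is available --- so $\|X^{\alpha+\e\beta}_t\|_V^2$ is merely $L^1$ in $(t,\omega)$ and the polynomial growth in (H4$^\prime$) cannot be dominated by a single fixed function --- and one must instead argue via uniform integrability along subsequences; the standing hypothesis $q>p^\prime+2$ is precisely the integrability margin that makes the relevant family uniformly integrable.
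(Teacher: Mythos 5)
Your proposal is correct and follows essentially the same route as the paper: freeze the mean-field input in the linearised equation and run a contraction in an exponentially weighted metric for its well-posedness, then Taylor-expand via the fundamental theorem of calculus, close an energy estimate on the remainder $R^\e$ with Itô, (H3$^\prime$), Burkholder--Davis--Gundy and Gronwall, and pass to the limit using the uniform integrability margin supplied by $q>p^\prime+2$. The one caveat is your second alternative for the first step: since $\hat F_Y(\hat\theta^\alpha_t)(\hat Z_t)$ depends on the joint law $\cL(X^\alpha_t,Z_t)$ and not on $\cL(Z_t)$ alone (as the paper's remark following the theorem emphasises), the fixed point cannot be run on the marginal flow in $C([0,T];\cP_2(H))$ but must be run on the frozen copy $\hat Z\in C([0,T];L^2(\Omega,H))$, which is exactly what the paper does and what your weighted-metric contraction delivers after this cosmetic adjustment.
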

        \begin{remark}
            \eqref{eq:Gateaux derivative SPDE} is only a true McKean-Vlasov equation if coupled with \eqref{eq:TheBase-State-SPDE}, as $\hat{F}_Y(\hat{\theta}^\alpha_t)(\hat{Z}_t)$ depends not only on $\cL(Z)$, but on $\cL(X,Z)$ (and the same holds for $\hat{B}_Y$). Taking the same notion of a solution for this equation as in Definition \ref{def:Solution to SPDE}, we will show existence and uniqueness in the following proof.\\
            Note, that we could use the Lions derivative to write the equation, as
            \begin{align*}
                (\hat{F}_Y(\hat{\theta}^\alpha_t)(\hat{Z}_t))(\omega)
                =&\hat{\E}\left[\partial_\mu F(t,X^\alpha_t(\omega),\cL(X^\alpha_t),\alpha_t(\omega))(\hat{X}^\alpha_t)(\hat{Z}_t)\right]\\
                :=&\int_{H^2} \partial_\mu F(t,X^\alpha_t(\omega),\cL(X^\alpha_t),\alpha_t(\omega))(x)(z) \cL(X^\alpha_t,Z_t)(d(x,z)),
            \end{align*}
            but intentionally do not and instead write the equation using the Fréchet derivative of the lift, to emphasize, that we do not yet need the Lions derivative.
        \end{remark}
        \begin{proof}
            \textbf{Existence and uniqueness of a solution to the equation:} For this, we first fix $\hat{Z}$, see that the resulting equations have a unique solution for each $\hat{Z}$ and then make a fixed point argument to get the desired solution.
            We want to define
            \begin{equation*}
                \Psi: C\left([r, T], L^2\left(\Omega,H\right)\right)\rightarrow C\left([r, T] , L^2\left(\Omega,H\right)\right), \tilde{Z}\to Z^{\tilde{Z}},
            \end{equation*}
            where $Z^{\tilde{Z}}$ is the unique solution to \eqref{eq:Gateaux derivative SPDE} (started in $r\in [0,T]$) for this fixed $\hat{Z}=\tilde{Z}$ and $Z_r\in L^q(\Omega, H)$. Thus, we must check the existence and uniqueness, but letting
            \begin{align*}
                & \tilde{A}(t, \omega, v)
                :=Lv+ F_x(\theta^\alpha_t(\omega)) (v)+F_\alpha(\theta^\alpha_t(\omega))  (\beta_t)
                +\hat{F}_Y(\hat{\theta}^\alpha_t(\omega))(\tilde{Z}_t) \\
                &\tilde{B}(t, \omega, v)
                :=B_x(\theta^\alpha_t(\omega)) (v)+B_\alpha(\theta^\alpha_t(\omega)) (\beta_t)+\hat{B}_Y(\hat{\theta}^\alpha_t(\omega))(\tilde{Z}_t)
            \end{align*}
            we get $dZ^{\tilde{Z}}_t = \tilde{A}(t,Z^{\tilde{Z}}_t) dt +\tilde{B}(t,Z^{\tilde{Z}}_t) dW_t$. Using Assumption \ref{Assumptions:DifferentiabilityOfCoefficients}, it follows then by classical SPDE results (cf. \cite{Liu2015}) that the equation has a unique solution. Note, that $(t,\omega)\to \hat{F}_Y(\hat{\theta}^\alpha_t(\omega))(\tilde{Z}_t) $ is progressively measurable, as it is a limit of progressively measurable functions, and the same holds for $\hat{B}_Y$ (also cf. Lemma \ref{lemma:joint measurability}).\\
            For the fixed point argument, we define the metric 
            \begin{equation*}
                \delta(Z^{(1)}, Z^{(1)}):=\sup _{t \in[r, T]} e^{-\kappa t} \E\left[\|Z^{(1)}_t- Z^{(2)}_t\|_H^2\right].
            \end{equation*}
            For $\tilde{Z}^{(1)},\tilde{Z}^{(2)}\in C([r, T], L^2(\Omega,H))$, Itô's formula yields
            \begin{align*}
                \delta(Z^{\tilde{Z}^{(1)}},Z^{\tilde{Z}^{(2)}})
                = &\sup_{t\in[r,T]}\int_r^t e^{-\kappa s}
                \E\left[ 2\langle \tilde{A}(t, Z^{\tilde{Z}^{(1)}}_s) 
                - \tilde{A}(t, Z^{\tilde{Z}^{(2)}}_s), Z^{\tilde{Z}^{(1)}}_s-Z^{\tilde{Z}^{(2)}}_s \rangle_V\right. \\
                &\left.+\left\|\tilde{B}(t, Z^{\tilde{Z}^{(1)}}_s) 
                - \tilde{B}(t, Z^{\tilde{Z}^{(2)}}_s)\right\|_{L_2(H)}^2
                -\kappa \left\|Z^{\tilde{Z}^{(1)}}_s-Z^{\tilde{Z}^{(2)}}_s\right\|^2_H\right] ds.
            \end{align*}
            Now, Assumption \ref{Assumptions:DifferentiabilityOfCoefficients} (H3$^\prime$), Lemma \ref{Lemma:Boundedness of Diffusion derivatives} and choosing $\kappa$ appropriately gives
            \begin{align*}
                &\delta\left(Z^{\tilde{Z}^{(1)}},Z^{\tilde{Z}^{(2)}}\right)
                \leq  \left(A_3\int_r^T 1+\E\left[\left\|X^\alpha_s\right\|_H^q\right]ds+A_4\right)\sup_{t\in[r,T]}e^{-\kappa t}\E\left[\|\tilde{Z}^{(1)}_t-\tilde{Z}^{(2)}_t\|_H^2 \right].
            \end{align*}
            By the a priori estimate from Lemma \ref{Lemma:a priori-Estimate} $(A_3\int_r^T 1+\E[\|X^\alpha_s\|_H^q]ds+A_4)<1$ for $T$ sufficiently close to $r$, which makes $\Psi$ a contraction. Therefore a unique solution on $[r,\tau]$ for $\tau$ small enough exists and the needed 'smallness' does not depend on the starting time $r$ and therefore this solution can be extended to the whole interval $[0,T]$.
            Notice, that due to the conditions from Assumption \ref{Assumptions:DifferentiabilityOfCoefficients}, we can get the same estimate as in Lemma \ref{Lemma:a priori-Estimate} for $Z$ by replicating the arguments, i.e.
            \begin{equation}
                \mathbb{E}\left[\sup _{t \in[0, T]}\left\|Z_t^\alpha\right\|_{H}^q+\left(\int_0^T\left\|Z_t^\alpha\right\|_{V}^2 \mathrm{~d} t\right)^{q/2}\right] \leq c_2\label{eq:estimate for Z}.
            \end{equation}
            \textbf{Gateaux differentiability of the control-to-state map:} Let now $\alpha, \beta \in \mathbb{A}$ and $\epsilon>0$ such that $\alpha+\epsilon \beta \in \mathbb{A}$. By $X$ we denote the solution of the state equation \eqref{eq:TheBase-State-SPDE} with respect to $\alpha$ and by $X^\epsilon$ we denote the solution to \eqref{eq:TheBase-State-SPDE} with respect to $\alpha+\epsilon \beta$. Furthermore for $\lambda \in[0,1]$ we introduce the linear interpolations $X^{\lambda, \epsilon}:=X+\lambda\left(X^\epsilon-X\right)$ and $\alpha^{\lambda, \epsilon}:=\alpha+\lambda \epsilon \beta$. We will use $\theta_t = (t,X_t,\cL(X_t),\alpha_t)$ and $\hat{\theta}_t$ for the lifted variables and $\theta^\epsilon$ and $\theta^{\lambda,\epsilon}$ for the corresponding tuples $(t,X^\epsilon,\cL(X_t^\epsilon),\alpha+\epsilon\beta)$ and $(t,X^{\lambda,\epsilon},\cL(X^{\lambda,\epsilon}),\alpha^{\lambda,\epsilon})$ respectively.\\
            $\hat{F}(t,\cdot,\cdot,\cdot)$ is continuously Fréchet differentiable for every $t\in[0,T]$ and therefore 
            \begin{align*}
                \hat{F}(\hat{\theta}_t^\epsilon)
                =& \hat{F}(\hat{\theta}_t)
                +\int_0^1 \hat{F}_x(\hat{\theta}^{\lambda,\epsilon}_t)(X^\epsilon_t-X_t) 
                +\hat{F}_Y(\hat{\theta}^{\lambda,\epsilon}_t)(\hat{X}^\epsilon_t-\hat{X}_t)
                +\hat{F}_\alpha(\hat{\theta}^{\lambda,\epsilon}_t)(\epsilon\beta_t) d \lambda.
            \end{align*}
            and similarly for $B$.
            Defining the difference
            \begin{equation*}
                \Delta_t^\epsilon:=\frac{X_t^\epsilon-X_t}{\epsilon}-Z_t^{\alpha, \beta},
            \end{equation*}
            we can plug in \eqref{eq:TheBase-State-SPDE}, the above expansion and \eqref{eq:Gateaux derivative SPDE}, to get
            \begin{align*}
                d\Delta^\epsilon_t
                =&\left( L\Delta^\epsilon_t + \int_0^1 F_x(\theta^{\lambda,\epsilon}_t)\Delta^\epsilon_t+\hat{F}_Y(\hat{\theta}^{\lambda,\epsilon}_t)\hat{\Delta}^\epsilon_td\lambda\right)
                + \left(\int_0^1 F_x(\theta^{\lambda,\epsilon}_t)d \lambda
                -F_x(\theta_t)\right)Z_t\\
                &+\left(\int_0^1 F_\alpha(\theta^{\lambda,\epsilon}_t) d \lambda
                -F_\alpha(\theta_t)\right)\beta_t
                +\left(\int_0^1 \hat{F}_Y(\hat{\theta}^{\lambda,\epsilon}_t) d\lambda
                -\hat{F}_Y(\hat{\theta}_t)\right)\hat{Z}_t dt\\
                &+ \int_0^1 B_x(\theta^{\lambda,\epsilon}_t)\Delta^\epsilon_t+ \hat{B}_Y(\hat{\theta}^{\lambda,\epsilon}_t)\hat{\Delta}^\epsilon_td\lambda
                + \left(\int_0^1 B_x(\theta^{\lambda,\epsilon}_t)d \lambda
                -B_x(\theta_t)\right)Z_t\\
                &+\left(\int_0^1 B_\alpha(\theta^{\lambda,\epsilon}_t) d \lambda
                -B_\alpha(\theta_t)\right)\beta_t
                +\left(\int_0^1 \hat{B}_Y(\hat{\theta}^{\lambda,\epsilon}_t) d\lambda
                -\hat{B}_Y(\hat{\theta}_t)\right)\hat{Z}_t dW_t\\
                =:& I_1(t)+I_2(t)+I_3(t)+I_4(t)dt + I_5(t)+I_6(t)+I_7(t)+I_8(t)dW_t.
            \end{align*}
            Now, using Itô's formula, taking the supremum over $t\in[0,T]$ and the expectation and then using the Burkholder-Davis-Gundy inequality and Young inequality, we arrange the terms in a suitable way to estimate them. We arrive at
            \begin{align}
                &\E\left[\sup_{t\in[0,T]}\|\Delta^\epsilon_t\|_H^2\right] \nonumber\\
                \leq & \E\left[\sup_{t\in[0,T]} \int_0^t 2\langle I_1(t),\Delta_t^\epsilon \rangle_V +\left\|I_5(t)\right\|^2_{L_2(H)}dt 
                +\delta^\prime\int_0^T\left\|\Delta_t^\epsilon \right\|_V^2 dt\right] \label{eq:DeltaEpsilon Estimation 1}\\
                &+c_1\E\left[\int_0^T \left\| I_2(t)+I_3(t)+I_4(t)\right\|_{V^*}^2 +\left\|I_6(t)+I_7(t)+I_8(t)\right\|^2_{L_2(H)}dt\right] \label{eq:DeltaEpsilon Estimation 2}\\
                &+ c_1\E\left[\left(\int_0^T \|\Delta^\epsilon_t\|_H^2 \left\|I_5(t)+I_6(t)+I_7(t)+I_8(t)\right\|^2_{L_2(H)} dt\right)^{\frac{1}{2}}\right] \label{eq:DeltaEpsilon Estimation 3}.
            \end{align}
            With Assumption \ref{Assumptions:DifferentiabilityOfCoefficients} (H3$^\prime$), Lemma \ref{Lemma:Boundedness of Diffusion derivatives} and the a priori bound, we get 
            \begin{equation*}
                \eqref{eq:DeltaEpsilon Estimation 1}
                \leq \int_0^T c_2\E\left[\sup_{s\in[0,t]}\|\Delta^\epsilon_s\|_H^2\right] dt.
            \end{equation*}
            Further, 
            \begin{align*}
                \eqref{eq:DeltaEpsilon Estimation 3} \leq &  \frac{1}{2} \E\left[\sup_{t\in[0,T]}\|\Delta^\epsilon_t\|_H^2\right]+c_3\E\left[\int_0^T \sum_{i=5}^8 \left\| I_i(t)\right\|^2_{L_2(H)} dt\right],
            \end{align*}
            and by Lemma \ref{Lemma:Boundedness of Diffusion derivatives} \eqref{eq:Boundedness of Diffusion x-derivative} and \eqref{eq:Boundedness of Diffusion Y-derivative} and the a priori bound
            \begin{align*}
                \E\left[\int_0^T \left\|I_5(t)\right\|^2_{L_2(H)} dt\right] 
                \leq &c_4 \int_0^T \E\left[\sup_{s\in[0,t]}\|\Delta^\epsilon_s\|_H^2\right]dt.
            \end{align*}
            Putting this together, we get
            \begin{align*}
                \frac{1}{2}\E\left[\sup_{t\in[0,T]}\|\Delta^\epsilon_t\|_H^2\right] 
                \leq &  \int_0^T c_5\E\left[\sup_{s\in[0,t]}\|\Delta^\epsilon_s\|_H^2+\sum_{i=2}^4 \left\| I_i(t)\right\|_{V^*}^2 +\sum_{i=6}^8 \left\| I_i(t)\right\|^2_{L_2(H)}\right] dt
            \end{align*}
            and using Gronwall results in 
            \begin{equation*}
                \E\left[\sup_{t\in[0,T]}\|\Delta^\epsilon_t\|_H^2\right] 
                \leq  c_6\E\left[\int_0^T \sum_{i=2}^4 \left\| I_i(t)\right\|_{V^*}^2 +\sum_{i=6}^8 \left\| I_i(t)\right\|^2_{L_2(H)}dt\right].
            \end{equation*}
            It is left, to show, that the right-hand side converges to $0$ for $\epsilon\to 0$. We will only consider the convergence of $I_2$ as the other terms can be done in a similar way. Firstly, 
            \begin{align*}
                \E\left[\int_0^T \|I_2\|_{V^*}^2dt\right]
                \leq & \int_0^1\E\left[\int_0^T\left\|\left( F_x(\theta^{\lambda,\epsilon}_t)
                -F_x\left(\theta_t\right)\right)Z_t\right\|_{V^*}^2 dt\right]d \lambda.
            \end{align*}
            Now, by (H4$^\prime$), Young's inequality, $p^\prime+2 \leq q$ and the a priori bounds on $X$ and $Z$, we see
            \begin{align*}
                &\E\left[\int_0^T\left\|\left( F_x(\theta^{\lambda,\epsilon}_t)
                -F_x\left(\theta_t\right)\right)Z_t\right\|_{V^*}^2 dt\right]\\ 
                \leq & \sup_{\lambda \in [0,1]}2\E\left[\sup_{t\in[0,T]}\left(A_3+\|X^{\lambda,\epsilon}_t\|_H^{p^\prime}+\E\left[\|X^{\lambda,\epsilon}_t\|_H^2\right]\right)^{\frac{q}{q-2}} \right.\\
                & \left.+\left(\int_0^T \left(c^\prime_t+\|\alpha^{\lambda,\epsilon}_t\|_U^2+\|X^{\lambda,\epsilon}_t\|_V^2+\E\left[\|X^{\lambda,\epsilon}_t\|_H^2\right]\right)dt\right)^{\frac{q}{2}}\right]\\
                &+2 \E\left[\sup_{t\in[0,T]} \left(A_3+\|Z_t\|^{p^\prime}_H\right)^\frac{q}{q-2}+\left(\int_0^T \left(c^\prime_t+\|Z_t\|^2_V\right)dt\right)^\frac{q}{2}\right]<\infty,
            \end{align*}
            so we only have to show, that for every $\lambda $ and $\epsilon\to 0$
            \begin{equation*}
                \E\left[\int_0^T\left\|\left( F_x(\theta^{\lambda,\epsilon}_t)
                -F_x\left(\theta_t\right)\right)Z_t\right\|_{V^*}^2 dt\right]
                \to 0.
            \end{equation*}
            But the above estimation also works for a slightly bigger exponent, as $q>p^\prime+2$, so for some $\gamma>0$
            \begin{equation*}
                \int_0^T \E\left[\left\|\left( F_x(\theta^{\lambda,\epsilon}_t)
                -F_x(\theta_t)\right)Z_t\right\|_{V^*}^{2(1+\gamma)} \right] dt<\infty,
            \end{equation*}
            and we get uniform integrability of 
            the integrand. Furthermore, by Lemma \ref{lemma:Lipschitz of Control-to-State-Map},
            we get $\theta^{\lambda,\epsilon}_t\to \theta_t$ in $[0,T]\times L^2(\Omega, H)\times \cP_2(H)\times U$ for all $t\in[0,T]$ and also $X^{\lambda,\epsilon}\to X$ in $L^2(\Omega,L^2([0,T],V))$. Thus, we have convergence in measure (with respect to $\P\otimes dt$) of $\theta^{\lambda,\epsilon}_t(\omega)\to \theta_t(\omega)$ in $[0,T]\times  V\times \cP_2(H)\times U$ and by the continuity of $F_x(t,\cdot,\cdot,\cdot): V\times \cP_2(H)\times U$, as $F$ is continuously differentiable, the desired convergence.
        \end{proof}
    \subsection{Gateaux Differentiability of the Cost Functional}
        An application of the chain rule gives the Gateaux differentiability of the cost functional.
        \begin{corollary} \label{Corollary:Representation of Gateaux of Cost Function}
            Let Assumptions \ref{assumption:Linear operator}, \ref{Assumption:Standard Assumptions for Existence and Uniqueness of State Equation}, \ref{Assumption:f and g are bounded below and quadratically} and \ref{Assumptions:DifferentiabilityOfCoefficients} hold. Then, the cost functional $J: \mathbb{A} \rightarrow \mathbb{R}$ is Gâteaux differentiable and its Gâteaux derivative at $\alpha \in \mathbb{A}$ in direction $\beta \in \mathbb{A}$ is given by
            \begin{align*}
                d J(\alpha) \cdot \beta= & \mathbb{E}\left[\int_0^T f_x\left(t, X^\alpha_t, \cL\left(X^\alpha_t\right), \alpha_t\right)\left( Z_t^{\alpha, \beta}\right)
                +f_\alpha\left(t, X^\alpha_t, \cL\left(X^\alpha_t\right), \alpha_t\right) \left(\beta_t\right)dt\right] \\
                & +\mathbb{E}\left[\int_0^T\hat{\mathbb{E}}\left[\partial_\mu f \left(t, X^\alpha_t, \cL(X^\alpha_t), \alpha_t\right)\left(\hat{X}^\alpha_t\right) \left(\hat{Z}_t^{\alpha, \beta}\right)\right]dt\right]\\
                & +\mathbb{E}\left[ g_x\left(X^\alpha_T, \cL\left(X^\alpha_T\right)\right) \left( Z_T^{\alpha, \beta}\right)+\hat{\mathbb{E}}\left[\partial_\mu g\left(X^\alpha_T, \cL\left(X^\alpha_t\right)\right)\left(\hat{X}^\alpha_T\right) \left( \hat{Z}_T^{\alpha, \beta}\right)\right]\right].
            \end{align*}
        \end{corollary}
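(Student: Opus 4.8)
The plan is to invoke the chain rule, writing $J(\alpha)=\Psi(\alpha,G(\alpha))$ with $\Psi(\alpha,X)=\E\bigl[\int_0^T f(t,X_t,\cL(X_t),\alpha_t)\,dt+g(X_T,\cL(X_T))\bigr]$: the map $G$ is Gâteaux differentiable with derivative $Z^{\alpha,\beta}$ by the preceding theorem, so it remains to differentiate $\Psi$ and to push the limit through $\E[\int_0^T\cdot\,dt]$. Concretely, fix $\alpha,\beta\in\A$ and $\epsilon>0$ with $\alpha+\epsilon\beta\in\A$, set $X=X^\alpha$, $X^\epsilon=X^{\alpha+\epsilon\beta}$, and keep the interpolations $X^{\lambda,\epsilon}=X+\lambda(X^\epsilon-X)$, $\alpha^{\lambda,\epsilon}=\alpha+\lambda\epsilon\beta$ and the shorthands $\theta^{\lambda,\epsilon}_t,\hat{\theta}^{\lambda,\epsilon}_t$ from the proof of that theorem. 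By Assumption \ref{Assumption: f,g diff. and moment bounds}, $f$ and $g$ are continuously Fréchet differentiable in $(x,\alpha)$ and $\Lambda$-continuously L-differentiable in $\mu$, so their lifts $\hat{f}(t,\cdot,\cdot,\cdot)$, $\hat{g}$ are jointly continuously Fréchet differentiable on $H\times L^2(\Omega,H)\times U$ (resp.\ $H\times L^2(\Omega,H)$), as in the proof of Lemma \ref{lemma:joint measurability} (\cite[Theorem 9.24]{Pan2023}). The fundamental theorem of calculus then gives, $dt\otimes\P$-a.e.,
\[
\tfrac1\epsilon\bigl(\hat{f}(\hat{\theta}^\epsilon_t)-\hat{f}(\hat{\theta}_t)\bigr)=\int_0^1 f_x(\theta^{\lambda,\epsilon}_t)\bigl(\tfrac{X^\epsilon_t-X_t}{\epsilon}\bigr)+\hat{f}_Y(\hat{\theta}^{\lambda,\epsilon}_t)\bigl(\tfrac{\hat{X}^\epsilon_t-\hat{X}_t}{\epsilon}\bigr)+f_\alpha(\theta^{\lambda,\epsilon}_t)(\beta_t)\,d\lambda,
\]
together with the analogous identity for $g$ at $t=T$ (without the $\alpha$-term).

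I would then identify the $\epsilon\to0$ limit. By the preceding theorem $\tfrac{X^\epsilon-X}{\epsilon}=Z+\Delta^\epsilon$ with $\E[\sup_t\|\Delta^\epsilon_t\|_H^2]\to0$, and by Lemma \ref{lemma:Lipschitz of Control-to-State-Map} we have $\theta^{\lambda,\epsilon}_t\to\theta_t$ in $L^2(\Omega,H)\times\cP_2(H)\times U$ for each $t$ and $X^{\lambda,\epsilon}\to X$ in $L^2(\Omega,L^2([0,T],V))$, hence convergence in $\P\otimes dt$-measure of $\theta^{\lambda,\epsilon}_t(\omega)\to\theta_t(\omega)$. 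Continuity of $f_x,f_\alpha$ in $(x,\mu,\alpha)$ and of $\hat{f}_Y$ as a map into $(L^2(\Omega,H))^*$ — with operator norm at $\hat{\theta}^{\lambda,\epsilon}_t$ controlled, uniformly in $\epsilon$, by the growth bound on $\partial_\mu f$ and the a priori bound on $X^{\lambda,\epsilon}$ — yield the pointwise limit $f_x(\theta_t)(Z_t)+\hat{f}_Y(\hat{\theta}_t)(\hat{Z}_t)+f_\alpha(\theta_t)(\beta_t)$, the $\Delta^\epsilon$-contributions vanishing in $L^1$ by Cauchy--Schwarz. To interchange limit and $\E[\int_0^T\cdot\,dt]$ I would establish equi-integrability of the integrands: the growth bounds of Assumption \ref{Assumption: f,g diff. and moment bounds}, the $q$-th moment estimates of Lemma \ref{Lemma:a priori-Estimate} (uniform in $\epsilon$ since $\|\alpha^{\lambda,\epsilon}_t\|_U\le\|\alpha_t\|_U+\|\beta_t\|_U$) and estimate \eqref{eq:estimate for Z} for $Z$ give, via Hölder, an $L^{1+\gamma}(\Omega\times[0,T])$ bound for some $\gamma>0$ (here the strict inequality $q>p'+2$, in particular $q>2$, provides the needed room), so the integrands converge in $L^1(\Omega\times[0,T])$ and the expectations converge. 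The $g$-term is handled identically at $t=T$.

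Finally I would rewrite the $\mu$-contributions via the Radon--Nikodym representation \eqref{eq:Radon Nikodym Representation of Lions derivative} of the L-derivative of the lift,
\[
\hat{f}_Y(\hat{\theta}_t)(\hat{Z}_t)=\hat{\E}\bigl[\partial_\mu f(t,X_t,\cL(X_t),\alpha_t)(\hat{X}_t)(\hat{Z}_t)\bigr],\qquad \hat{g}_Y(X_T,\hat{X}_T)(\hat{Z}_T)=\hat{\E}\bigl[\partial_\mu g(X_T,\cL(X_T))(\hat{X}_T)(\hat{Z}_T)\bigr],
\]
and collect the five resulting contributions (running cost in $x$, in $\alpha$, in $\mu$; terminal cost in $x$ and in $\mu$) into the claimed formula; since the limit is linear in $\beta$ and, by the a priori bound for $Z$, bounded in $\beta$, it is the Gâteaux derivative $dJ(\alpha)\cdot\beta$.

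The step I expect to be the main obstacle is the equi-integrability/limit-passage, and within it the Lions-derivative term $\hat{f}_Y(\hat{\theta}^{\lambda,\epsilon}_t)\bigl(\tfrac{\hat{X}^\epsilon_t-\hat{X}_t}{\epsilon}\bigr)$: one must simultaneously handle the convergence $\hat{f}_Y(\hat{\theta}^{\lambda,\epsilon}_t)\to\hat{f}_Y(\hat{\theta}_t)$ in $(L^2(\Omega,H))^*$ and the $L^2$-convergence $\tfrac{\hat{X}^\epsilon_t-\hat{X}_t}{\epsilon}\to\hat{Z}_t$, while keeping the products equi-integrable despite the $q$-th power growth of $\partial_\mu f$ — which is precisely where the moment bounds beyond second order (and the gap $q>p'+2$) are essential.
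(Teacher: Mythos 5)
Your proposal is correct and takes essentially the same route as the paper: the paper gives no written proof beyond the remark that ``an application of the chain rule'' to $J(\alpha)=\Psi(\alpha,G(\alpha))$, combined with the Gâteaux differentiability of $G$ from the preceding theorem, yields the formula, and your Taylor-expansion plus uniform-integrability execution is the natural way to carry this out. Note only that you (correctly) invoke Assumption \ref{Assumption: f,g diff. and moment bounds} for the differentiability and growth bounds of $f$ and $g$, which the corollary's hypothesis list omits but which is clearly needed for the statement to even make sense.
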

\section{Adjoint Calculus and the Pontryagin Maximum Principle}\label{section:Adjoint Calculus and the Pontryagin Maximum Principle}
    \subsection{The Adjoint Equation and Its Well-Posedness}\label{subsection:The Adjoint Equation and Its Well-Posedness}
        We need to introduce the (first-order) adjoint equation. For this, we again remember our notation $\theta^\alpha_t=(t, X^\alpha_t, \cL(X^\alpha_t), \alpha_t)$, where $X^\alpha$ is the unique solution to \eqref{eq:TheBase-State-SPDE} for the control $\alpha\in\A$, and change the notation $\hat{\theta}^\alpha_t=(t,X^\alpha_t,\hat{X}^\alpha_t,\alpha_t)$, which was suited for a lifting of a function to an $L^2$-space, to $\hat{\theta}^\alpha_t=(t, \hat{X}^\alpha_t, \cL(X^\alpha_t), \hat{\alpha}_t)$, which is suited to work with L-derivatives and just denotes an independent copy of $\theta^\alpha_t$. This change might seem confusing at first, but it should always be clear from context whether the function, where we plug it in, is lifted or not. This will mostly be used in the following way:
        \begin{align*}
            \hat{\E}\left[F_{\mu}(\hat{\theta}^\alpha_t)(X^\alpha_t)(\hat{Z}_t)\right](\omega)
            =&\hat{\E}\left[\partial_\mu F(t,\hat{X}^\alpha_t,\cL(X^\alpha_t),\hat{\alpha}_t)(X^\alpha_t(\omega))(\hat{Z}_t)\right]\\
            :=&\int_{H^2} \partial_\mu F(t,x,\cL(X^\alpha_t),\alpha)(X^\alpha_t(\omega))(z) \pi(d(x,z,\alpha)),
        \end{align*}
        where $\pi=\cL(X^\alpha_t,Z_t,\alpha_t)$, and similarly for $B,f$ and $g$, i.e. the copies are integrated out. As $\alpha$ is fixed in this section, we will also just use $\theta_t$, if no confusion is possible.\\
        We denote an adjoint operator with a $*$. The adjoint equation is given by
        \begin{align}
            d P_t= & -L P_t- F_x^*\left(\theta^\alpha_t\right)P_t
            -B_x^*\left(\theta^\alpha_t\right)Q_t
            -f_x\left(\theta^\alpha_t\right)-\hat{\mathbb{E}}\left[ \left(\partial_\mu F(\hat{\theta}^\alpha_t)(X^\alpha_t)\right)^*\hat{P}_t\right] \label{eq:Adjoint Equation 1}\\
            & -\E\left[\left(\partial_\mu B(\hat{\theta}^\alpha_t)(X^\alpha_t)\right)^*\hat{Q}_t\right]
            -\hat{\mathbb{E}}\left[  \partial_\mu f(\hat{\theta}^\alpha_t)( X^\alpha_t)\right] d t+Q_t d W_t \label{eq:Adjoint Equation 2}\\
            P_T= & g_x\left(X^\alpha_T, \cL\left(X^\alpha_T\right)\right)+\hat{\mathbb{E}}\left[\partial_\mu g\left(\hat{X}_T, \cL\left(X^\alpha_T\right)\right)(X^\alpha_T)\right].\label{eq:Adjoint Equation 3}
        \end{align}
        We introduce the Hamiltonian, for simpler notation,
        \begin{equation*}
            H(t, x, \mu,\alpha, p, q)
            :=\langle Lx + F(t, x, \mu, \alpha), p\rangle_V+\langle B(t, x, \mu, \alpha), q\rangle_{L_2(H)}+f(t, x, \mu, \alpha)
        \end{equation*}
        and can rewrite
        \begin{equation}
            \begin{aligned}
                d P_t & =-H_x\left(\theta^\alpha_t, P_t, Q_t\right)
                -\hat{\mathbb{E}} \left[\partial_\mu H(\hat{\theta}^\alpha_t, \hat{P}_t, \hat{Q}_t)\left(X^\alpha_t\right)\right] d t+Q_t d W_t\\
                P_T & =g_x\left(X^\alpha_T, \cL\left(X^\alpha_T\right)\right)
                +\hat{\mathbb{E}} \left[\partial_\mu g(\hat{X}_T, \cL\left(X^\alpha_T\right))\left(X^\alpha_T\right)\right].
            \end{aligned}
            \label{eq:adjoint with Hamiltonian formulation 1}
        \end{equation}
        Note, that we suppress the dependence of $(P,Q)$ on $\alpha$. A more precise notation would be $(P^\alpha,Q^\alpha)$. This is a backwards SPDE of McKean-Vlasov type (technically again only if we couple it with \eqref{eq:TheBase-State-SPDE} since the coefficients depend on the joint law $\cL(X,P,Q)$), so we lack a definition of a solution to such an equation.
        \begin{definition} \label{def:solution to adjoint}
            A tuple of a continuous $H$-valued adapted processes $P$ and an $L_2(H)$-valued adapted process $Q$ is called a solution to \eqref{eq:adjoint with Hamiltonian formulation 1}, if for its 
            $\P\otimes dt$-equivalence class $(\tilde{P},\tilde{Q})$ we have 
            \begin{align*}
                \tilde{P}\in &L^2(\Omega \times [0, T] , H)\cap L^2(\Omega \times [0, T] , V)\text{ and }
                \tilde{Q}\in L^2(\Omega\times [0, T] , L_2(H))
            \end{align*}
            and $\P$-a.s. in $V^*$
            \begin{equation*}
                P_t = P_T+\int_t^T H_x\left(\theta^\alpha_s, \bar{P}_s, \bar{Q}_s\right)
                +\hat{\mathbb{E}} \left[\partial_\mu H(\hat{\theta}^\alpha_s, \hat{\bar{P}}_s, \hat{\bar{Q}}_s)\left(X_s\right)\right] ds 
                - \int_t^T \bar{Q}_s dW_s,
            \end{equation*}
            where $(\bar{P},\bar{Q})$ is a $V\times L_2(H)$-valued progressively measurable $\P\otimes dt$-version of $(\tilde{P},\tilde{Q})$.
        \end{definition}
        \begin{remark}
            Note, that we cannot use the lift of $F$ and $B$ for the formulation of the adjoint equation, as the input in the L-derivative is not integrated out here. This approach really uses the factorization of the L-derivative and pulls that function out of its expectation.\\
            Further, note that the coefficients of \eqref{eq:adjoint with Hamiltonian formulation 1} are indeed measurable in a suitable way by Lemma \ref{lemma:joint measurability}, which is a priori not clear.
        \end{remark}
        \begin{theorem}\label{theorem:adjoint existence and uniqueness}
            Let Assumptions \ref{assumption:Linear operator}, \ref{Assumption:Standard Assumptions for Existence and Uniqueness of State Equation}, \ref{Assumption:f and g are bounded below and quadratically}, \ref{Assumptions:DifferentiabilityOfCoefficients} and \ref{Assumption: f,g diff. and moment bounds} hold. Then, there exists a unique solution $(P, Q)$ to \eqref{eq:adjoint with Hamiltonian formulation 1} in the sense of Definition \ref{def:solution to adjoint}.
        \end{theorem}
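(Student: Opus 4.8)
The plan is to disentangle the two sources of difficulty in \eqref{eq:adjoint with Hamiltonian formulation 1}: the McKean--Vlasov coupling through the term $\hat{\E}[\partial_\mu H(\hat\theta^\alpha_t,\hat P_t,\hat Q_t)(X^\alpha_t)]$, and the fact that the remaining linear backward SPDE has coefficients --- notably $F_x^*(\theta^\alpha_t)$ --- that are monotone only in the variational sense of (H3$^\prime$) and grow polynomially, but not uniformly, in $\omega$ through $X^\alpha$. First I would treat the decoupled problem: freeze a progressively measurable process $\Phi \in L^2(\Omega\times[0,T];V^*)$ in place of $\hat{\E}[\partial_\mu H(\hat\theta^\alpha_t,\hat P_t,\hat Q_t)(X^\alpha_t)]$ and study the plain linear backward SPDE $-dP_t = (LP_t + F_x^*(\theta^\alpha_t)P_t + B_x^*(\theta^\alpha_t)Q_t + f_x(\theta^\alpha_t) + \Phi_t)\,dt - Q_t\,dW_t$ with terminal condition $P_T = \xi := g_x(X^\alpha_T,\cL(X^\alpha_T)) + \hat{\E}[\partial_\mu g(\hat X_T,\cL(X^\alpha_T))(X^\alpha_T)]$. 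By Assumption \ref{Assumption: f,g diff. and moment bounds} and the sharp moment estimate of Lemma \ref{Lemma:a priori-Estimate}, $\xi \in L^2(\Omega,\cF_T;H)$ and $f_x(\theta^\alpha_\cdot) \in L^2(\Omega\times[0,T];H)$, and, combining (H4$^\prime$) with the embedding $\|z\|_H \le c\,\|z\|_V$ and the strict bound $q > p^\prime + 2$, one checks that $\E\int_0^T \|F_x(\theta^\alpha_t)\|_{L(V,V^*)}^{2}\,dt < \infty$ (in fact with a slightly larger exponent).

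For the decoupled equation I would run a Galerkin scheme along the eigenbasis $(e_k)$ of $L$ available by Assumption \ref{assumption:Linear operator}: projecting onto $H_n := \mathrm{span}(e_1,\dots,e_n)$ --- which $L$ leaves invariant --- yields a finite-dimensional linear backward SDE whose coefficients are square-integrable in $(\omega,t)$ by the previous step, hence uniquely solvable $(P^n,Q^n)$ by standard linear backward SDE theory. The crucial a priori bound comes from It\^o's formula applied to $\|P^n_t\|_H^2$ (or to $\exp(\int_0^t c^\prime_s\,ds)\|P^n_t\|_H^2$, to absorb the $L^1$-in-time coefficient $c^\prime_\cdot$): the term $-2\langle LP^n_t + F_x^*(\theta^\alpha_t)P^n_t, P^n_t\rangle_V$ is bounded below by $-A_3\|P^n_t\|_H^2 + \delta^\prime\|P^n_t\|_V^2$ thanks to $\langle F_x^*(\cdot)v,v\rangle_V = \langle F_x(\cdot)v,v\rangle_V$ and the \emph{uniform} monotonicity (H3$^\prime$), the $B_x^*$-term is controlled by $\tfrac14\|Q^n_t\|_{L_2(H)}^2 + c\,\|P^n_t\|_H^2$ using $\|B_x\|_{L(H,L_2(H))} \le A_4$ (Lemma \ref{Lemma:Boundedness of Diffusion derivatives}), the sources $f_x(\theta^\alpha_t)$ and $\Phi_t$ by Young's inequality, and the stochastic integral by Burkholder--Davis--Gundy; Gronwall then yields $\sup_n \E[\sup_{t\in[0,T]}\|P^n_t\|_H^2 + \int_0^T (\|P^n_t\|_V^2 + \|Q^n_t\|_{L_2(H)}^2)\,dt] < \infty$ together with, uniformly in $n$ and hence for the limit, the linear estimate $\E[\sup_t\|P_t\|_H^2 + \int_0^T (\|P_t\|_V^2 + \|Q_t\|_{L_2(H)}^2)\,dt] \le C\,(\E\|\xi\|_H^2 + \E\int_0^T (\|f_x(\theta^\alpha_t)\|_H^2 + \|\Phi_t\|_{V^*}^2)\,dt)$. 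Passing to weakly convergent subsequences $P^n \rightharpoonup P$ in $L^2(\Omega\times[0,T];V)$ and $Q^n \rightharpoonup Q$ in $L^2(\Omega\times[0,T];L_2(H))$, and using that the equation is \emph{linear} in $(P,Q)$ --- so that, tested against bounded processes, $F_x^*(\theta^\alpha)P^n$ and $B_x^*(\theta^\alpha)Q^n$ converge to the expected limits without any Minty-type argument --- one identifies $(P,Q)$ as a solution; continuity of $P$ in $H$ follows from the standard Gelfand-triple It\^o/regularity lemma, and uniqueness from applying the same It\^o estimate to the difference of two solutions.

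With the resulting solution operator $\mathcal{S}\colon \Phi \mapsto (P,Q)$ in hand, I would close the McKean--Vlasov coupling by a fixed point. On the Banach space $\mathcal{K}$ of pairs $(\bar P,\bar Q)$ of progressively measurable processes with $\bar P \in L^2(\Omega\times[0,T];V)$ continuous in $H$ and $\bar Q \in L^2(\Omega\times[0,T];L_2(H))$, define $\Psi(\bar P,\bar Q) := \mathcal{S}\big(t \mapsto \hat{\E}[\partial_\mu H(\hat\theta^\alpha_t,\hat{\bar P}_t,\hat{\bar Q}_t)(X^\alpha_t)]\big)$; this frozen source depends only on $\cL(X^\alpha_t,\bar P_t,\bar Q_t)$, is jointly measurable by Lemma \ref{lemma:joint measurability}, and --- by the pointwise growth bound on $\partial_\mu f$ (Assumption \ref{Assumption: f,g diff. and moment bounds}) and the boundedness and growth estimates for the $\mathrm{L}$-derivatives of $F$ and $B$ from Assumption \ref{Assumptions:DifferentiabilityOfCoefficients} and Lemma \ref{Lemma:Boundedness of Diffusion derivatives} --- is bounded in $V^*$ by $c\,\rho_t\,(\E\|\bar P_t\|_H^2 + \E\|\bar Q_t\|_{L_2(H)}^2)^{1/2}$, with a prefactor $\rho_\cdot$ depending on $X^\alpha$ that is integrable by Lemma \ref{Lemma:a priori-Estimate}. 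Applying the linear a priori estimate to the difference of two inputs then shows that $\Psi$ is a contraction on $[T-h,T]$ for some $h>0$ independent of the backward starting time --- or, alternatively, on all of $[0,T]$ in a norm weighted by $e^{-\kappa t}$ for $\kappa$ large --- exactly as in the fixed-point argument for \eqref{eq:Gateaux derivative SPDE}, so that the local solutions patch to a global one. The unique fixed point of $\Psi$ is the unique solution of \eqref{eq:adjoint with Hamiltonian formulation 1} in the sense of Definition \ref{def:solution to adjoint}.

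The main obstacle is the decoupled linear backward SPDE: unlike in \cite{Tang2019}, the zeroth-order coefficient $F_x^*(\theta^\alpha_t)$ is not bounded uniformly in $\omega$ but only through the random, polynomially growing quantity $\|X^\alpha_t\|_H^{p^\prime}$, so that neither the contraction-mapping proof of backward SPDE well-posedness nor a naive energy estimate applies directly. The resolution is that $F_x$ enters the energy identity \emph{only} through the uniform monotonicity bound (H3$^\prime$), whereas the growth bound (H4$^\prime$), together with $q > p^\prime + 2$ and the sharp moment estimate of Lemma \ref{Lemma:a priori-Estimate}, is needed only to give meaning to $F_x^*(\theta^\alpha)P$ and to justify the Galerkin limit in $V^*$; a secondary technical point is the joint measurability and integrability of the $\mathrm{L}$-derivative coefficients, which is dealt with via Lemma \ref{lemma:joint measurability} and the $\Lambda$-continuity of Definition \ref{definition:Lambda-continuously L-differentiable}.
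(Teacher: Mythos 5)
Your proposal follows essentially the same route as the paper: freeze the measure-dependent source term, solve the resulting linear backward SPDE by a Galerkin scheme along the eigenbasis of $L$ with energy estimates driven by the monotonicity/coercivity assumptions and a backwards Gronwall argument, pass to weak limits using the linearity of the equation to identify the limit, and close the McKean--Vlasov coupling with a Banach fixed point in an exponentially weighted norm, patching local solutions backwards in time. The only notable difference is that the paper additionally conditions the Galerkin coefficients on the filtration $\mathcal{F}^n_t$ generated by the projected Wiener process $W^n$ so that the finite-dimensional \emph{monotone} BSDE theory of Pardoux--R\u{a}\c{s}canu applies verbatim (``standard linear backward SDE theory'' does not suffice here, since the coefficient $F_x^*(\theta^\alpha_t)$ is only stochastically integrable rather than bounded in $\omega$, a point you correctly flag as the main obstacle); this is a technical refinement of the step you describe, not a different idea.
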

        \begin{proof}
            First, we fix $(\hat{P},\hat{Q})\in L^2(\Omega\times[0,T],H)\times L^2(\Omega\times[0,T],L_2(H))$ and show the existence and uniqueness of the resulting SPDE, that has no distribution dependence anymore. Afterwards, we will again use a fixed point argument to get our desired solution. For this purpose, we define 
            \begin{align*}
                \phi_t
                = &-\hat{\mathbb{E}}\left[ \langle \partial_\mu F(\hat{\theta}^\alpha_t)(X^{\alpha}_t)(\cdot),\hat{P}_t\rangle_V+\langle \partial_\mu B(\hat{\theta}^\alpha_t)(X^{\alpha}_t)(\cdot),\hat{Q}_t\rangle_{L_2(H)}
                +\partial_\mu f(\hat{\theta}^\alpha_t)( X^{\alpha}_t)\right].
            \end{align*}
            \textbf{Galerkin approximation:} We start, by projecting onto an orthonormal basis (of $H$) of eigenvectors of $L$
            \begin{equation*}
                V_n=\operatorname{span}\left\{e_1,\dots,e_n\right\},
            \end{equation*}
            i.e. $\Pi_n:V^*\to V_n: y\mapsto \sum_{i=1}^n \langle y, e_i\rangle_V e_i$, such that $\operatorname{span}\left\{e_i|i\in\N\right\}$ is dense in $V$. $\Pi_n|_H$ is the orthogonal projection onto $V_n\subset H$. We further define
            \begin{equation*}
                W^{n}_t:=\sum_{i=1}^n\langle W_t, e_i\rangle_H e_i\quad \text{and}\quad \cF^n_t = \sigma\left(W^n_s,s\leq t\right) \vee \cN,
            \end{equation*}
            and 
            \begin{align*}
                \tilde{F}^n_t(\omega)
                :=&\E[\Pi_n F^*_x(\theta_t)\mid \cF^n_t](\omega),\quad
                \tilde{B}^n_t(\omega)
                :=\E[\Pi_n B^*_x(\theta_t)\mid \cF^n_t](\omega),\\
                \tilde{f}^n_t(\omega)
                :=&\E[\Pi_n f_x(\theta_t)\mid \cF^n_t](\omega), \quad
                \tilde{\phi}^n_t(\omega)
                :=\E[\Pi_n \phi_t\mid \cF^n_t](\omega)\quad\text{and}\\
                \tilde{P}^n_T 
                = &\E\left[\Pi_n\left( g_x\left(X^\alpha_T, \cL\left(X^\alpha_T\right)\right)
                +\tilde{\mathbb{E}} \left[\partial_\mu g\left(\tilde{X}_T, \cL\left(X^\alpha_T\right)\right)\left(X^\alpha_T\right)\right]\right)\middle|\cF^n_T\right].
            \end{align*}
            Now, consider
            \begin{align*}
                dP^n_t =& -\left(LP^n_t+\tilde{F}^n_tP^n_t 
                + \tilde{B}^n_t Q^n_t
                + \tilde{f}^n_t
                +\tilde{\phi}^n_t \right)dt 
                + Q^n_t dW^n_t\\
                P^n_T=&\tilde{P}^n_T,
            \end{align*}
            which is a BSDE on $V_n$ and identifying this finite dimensional space with $\R^n$, we can use standard results for finite dimensional BSDE with monotone coefficients. Using our Assumptions \ref{Assumptions:DifferentiabilityOfCoefficients} and \ref{Assumption: f,g diff. and moment bounds} and the bounds on $X$ and the control space, we can show that this equation fulfills (BSDE-MH$_\cF$) from \cite{Pardoux2014} and therefore, using Theorem 5.27 from there, we get the existence of a unique solution $(P^n,Q^n)\in L^2(\Omega, C([0,T],V_n))\times L^2(\Omega,L^2([0,T],L_2(H)))$, that is progressively measurable with respect to $\{\cF^n_t\}_{t\geq 0}$, for each $n\in\N$.\\
            \textbf{A priori bound:} We want an energy estimate, to extract a (weakly) convergent subsequence and determine the limit as our desired solution. For this, we notice, that by Itô's formula (\cite{Liu2015} Theorem 4.2.5)
            \begin{align*}
                \|P^n_T\|_H^2
                =&\|P^n_t\|_H^2-\int_t^T 2\langle LP^n_s+\tilde{F}^n_sP^n_s 
                + \tilde{B}^n_s Q^n_s
                + \tilde{f}^n_s
                +\tilde{\phi}^n_s ,P^n_s\rangle_H ds\\
                &+ \int_t^T\|Q_s^n\|^2_{L_2(H)} ds 
                + \int_t^T 2\langle P^n_s,Q^n_s dW^n_s\rangle_H.
            \end{align*}
            Rearranging, taking expectations and using Assumption \ref{Assumptions:DifferentiabilityOfCoefficients} (H2$^\prime$) gives
            \begin{align}
                &\E\left[\|P^n_t\|_H^2 +\int_t^T\|Q_s^n\|^2_{L_2(H)} +\delta^\prime\|P^n_s\|_V^2 ds\right]\nonumber\\
                =&\E\left[\|P^n_T\|_H^2+\int_t^T 2\langle LP^n_s+\tilde{F}^n_sP^n_s 
                + \tilde{B}^n_s Q^n_s
                + \tilde{f}^n_s
                +\tilde{\phi}^n_s ,P^n_s\rangle_H
                +\delta^\prime\|P^n_s\|_V^2
                ds\right]\nonumber\\
                \leq & \E\left[\|P^n_T\|_H^2+\int_t^T C_s+c_1\|P^n_s\|_H^2+\frac{1}{2}\|Q^n_s\|_{L_2(H)}^2ds\right] \label{eq:Adjoint_Galerkin_a priori_ExpectationEstimation}
            \end{align}
            where $(C_t)\in L^1([0,T]\times\Omega,\R^+)$ exists, depending on $X,\cL(X),\hat{Q},\hat{P},\alpha$ and $c^\prime$, but not on $n$. In particular 
            \begin{equation*}
                \E\left[\|P^n_t\|_H^2\right]
                \leq \E\left[ \|P_T\|_H^2+\int_0^T C_s ds \right] +c_1 \int_t^T\E\left[ \|P^n_s\|_H^2\right]ds,
            \end{equation*}
            and, using a backwards Gronwall inequality (cf. \cite{FAN2011427} Lemma 4), we deduce
            \begin{align*}
                &\E\left[\|P^n_t\|_H^2\right]
                \leq \E\left[ \|P_T\|_H^2+\int_0^T C_s ds \right] e^{\int_t^T c_1 ds} <\infty,
            \end{align*}
            where $P_T$ fulfills the necessary integrability by the moment estimates on the derivative of $g$ from Assumption \ref{Assumption: f,g diff. and moment bounds} and the a priori estimates on $X$. This implies the uniform bound
            \begin{equation*}
                \sup_{n\in\N}\E\left[\int_0^T\|P^n_s\|_H^2 ds\right] \leq C(T) \E\left[ \|P_T\|_H^2+\int_0^T A_3 C_s ds \right]<\infty
            \end{equation*}
            and by \eqref{eq:Adjoint_Galerkin_a priori_ExpectationEstimation}, also uniform bounds on $\E[\int_0^T\|Q_s^n\|^2_{L_2(H)}ds] $ and $\E[\int_0^T\|P^n_s\|_V^2 ds]$.\\
            \textbf{Limiting Procedure:} With these estimates, we can find subsequences of $(P^n)$ and $(Q^n)$, that we will denote with the same index, such that
            \begin{itemize}
                \item $P^n \to P$ weakly in $L^2(\Omega\times [0,T],V)$ and weakly in $L^2(\Omega\times [0,T],H)$ and
                \item $Q^n\to Q$ weakly in $L^2(\Omega\times [0,T],L_2(H))$.
            \end{itemize}
            These weak limits are again progressively measurable, since the approximants are.\\
            Notice, that $h_t\mapsto \int_t^T h_s dW_s$ is a bounded linear operator from the square integrable, progressively measurable, $L_2(H)$-valued processes into the square integrable continuous martingales on $H$ by the Itô-isometry. Thus, this mapping is also weakly-weakly continuous and therefore 
            \begin{equation*}
                \int_t^T Q^n_s dW^n_s = \int_t^T Q^n_s \Pi_n dW_s \to \int_t^T Q_s dW_s
            \end{equation*}
            weakly on the square integrable continuous martingales and hence also weakly$^*$ in $L^\infty ([0,T],L^2(\Omega,H))$.\\
            By these convergences we get for all $k,l\in\N$, $v \in V_k \subset V\subset H$ and $\psi\in L^\infty ([0,T]\times\Omega,\R)$ progressively measurable with respect to $\{\cF_t^l\}_{t\geq 0}$, using Fubini, that
            \begin{align*}
                &\E\left[\int_0^T \langle P_t,\psi_t v\rangle_V dt\right]\\
                =& \lim_{n\to\infty} \E\left[\int_0^T \langle P^n_T,\psi_t v\rangle_V dt \right.\\
                &+\left.\int_0^T \int_t^T \langle LP^n_s+\tilde{F}^n_sP^n_s 
                + \tilde{B}^n_s Q^n_s
                + \tilde{f}^n_s
                +\tilde{\phi}^n_s,\psi_t v\rangle_V ds 
                -\langle \int_t^T Q_s^n dW^n_s,\psi_t v\rangle_H dt\right]\\
                =& \lim_{n\to\infty} \E\left[\langle P^n_T,v\rangle_V \int_0^T \psi_t dt -\int_0^T\langle \int_t^T Q_s^n dW^n_s,\psi_t v\rangle_H dt
                 \right.\\
                & \left.+\int_0^T \langle LP^n_s+\tilde{F}^n_sP^n_s 
                + \tilde{B}^n_s Q^n_s
                + \tilde{f}^n_s
                +\tilde{\phi}^n_s,\int_0^s \psi_t dt v \rangle_V ds\right],\\
                =& \lim_{n\to\infty} \E\left[\langle P^n_T,v\rangle_V \int_0^T \psi_t dt -\int_0^T\langle \int_t^T Q_s^n dW^n_s,\psi_t v\rangle_H dt\right.\\
                &  +\int_0^T \langle \int_0^s \psi_t dt\left(Lv+F_x(\theta_s)v\right) ,P^n_s\rangle_V 
                + \langle \int_0^s \psi_t dt B_x(\theta_s)v, Q^n_s\rangle_{L_2(H)}+ \langle \Pi_n f_x(\theta_s)\\
                &+\left.\Pi_n \phi(s),v \int_0^s \psi_t dt\rangle_H ds 
                \right],\\
                =&\E\left[\int_0^T \langle P_T,\psi_t v\rangle_V dt-\int_0^T\langle \int_t^T Q_s dW_s,\psi_t v\rangle_H dt \right.\\
                &+\left.\int_0^T \int_0^s \langle LP_s+F^*_x(\theta_s)P_s 
                + B^*_x(\theta_s) Q_s
                + f_x(\theta_s)
                +\phi_s,\psi_t v\rangle_V dt ds 
                \right],
            \end{align*}
            hence
            \begin{align*}
                P_t = P_T+\int_t^T LP_s+F^*_x(\theta_s)P_s 
                + B^*_x(\theta_s) Q_s
                + f_x(\theta_s)
                +\phi_s ds 
                - \int_t^T Q_s dW_s
            \end{align*}
            $\P\otimes dt$-almost surely. Thus, defining 
            \begin{align*}
                \tilde{P}_t = P_T+\int_t^T LP_s+F^*_x(\theta_s)P_s 
                + B^*_x(\theta_s) Q_s
                + f_x(\theta_s)
                +\phi_s ds 
                - \int_t^T Q_s dW_s,
            \end{align*}
            we see that $(\tilde{P},Q)$ is a solution to the equation.\\
            Towards the uniqueness, let $(P^{(1)},Q^{(1)})$ and $(P^{(2)},Q^{(2)})$ be two solutions. Writing $\Delta^P_t:=P^{(1)}_t-P^{(2)}_t$ and $\Delta^Q_t:=Q^{(1)}_t-Q^{(2)}_t$ and using Itô's formula, we get
            \begin{align*}
                0=\E\left[\left\|\Delta^P_T\right\|_H^2 \right] 
                = &\E\left[\left\|\Delta^P_t\right\|_H^2\right]
                +\E\left[\int_t^T 2\langle -L\left(\Delta^P_s\right)-F^*_x(\theta_s)\left(\Delta^P_s\right) 
                \right. \\
                &\left.-B_x^*(\theta_s)\left(\Delta^Q_s\right)
                ,\Delta^P_s\rangle_V +\left\|\Delta^Q_s\right\|^2_{L_2(H)} ds\right],
            \end{align*}
            rearranging and using (H3$^\prime$), Lemma \ref{Lemma:Boundedness of Diffusion derivatives} and the Young inequality, results in 
            \begin{align*}
                \E\left[\|\Delta^P_t\|_H^2 +\int_t^T\|\Delta^Q_s\|^2_{L_2(H)} ds\right]
                =&\int_t^T \E\left[ c_2\|\Delta^P_s\|_H^2+\frac{1}{2}\| \Delta^Q_s\|_{L_2(H)}^2 \right] ds.
            \end{align*}
            Now using the backwards Gronwall inequality again, we get the uniqueness. The continuity of $P$ follows from the Itô formula from \cite{Liu2015} Theorem 4.2.5 after noticing 
            \begin{align*}
                P_t = P_0+\int_0^t -LP_s-F^*_x(\theta_s)P_s 
                - B^*_x(\theta_s) Q_s
                - f_x(\theta_s)
                -\phi_s ds 
                + \int_0^t Q_s dW_s,
            \end{align*}
            where 
            \begin{align*}
                P_0 = P_T+\int_0^T LP_s+F^*_x(\theta_s)P_s 
                + B^*_x(\theta_s) Q_s
                + f_x(\theta_s)
                +\phi_s ds 
                - \int_0^T Q_s dW_s.
            \end{align*}
            \textbf{Fixed point argument:} Now, that we have a unique solution for fixed $(\hat{P},\hat{Q})$, we denote $\left(P^{(\hat{P},\hat{Q})},Q^{(\hat{P},\hat{Q})}\right)$
            this solution and define $S:=C\left([t, T], L^2\left(\Omega,H\right)\right)\times L^2([t,T]\times\Omega,L_2(H))$ and
            \begin{equation*}
                \Psi: S\rightarrow S, (\hat{P},\hat{Q})\to \left(P^{(\hat{P},\hat{Q})},Q^{(\hat{P},\hat{Q})}\right).
            \end{equation*}
            We will show, that this is a contraction for the metric 
            \begin{equation*}
                \delta((\hat{P}^1,\hat{Q}^1), (\hat{P}^2,\hat{Q}^2)):=\sup _{s \in[t, T]} e^{\kappa s} \E\left[\left\|\hat{P}^1_s- \hat{P}^2_s\right\|_H^2\right]+\E\left[\int_t^Te^{\kappa s} \left\| \hat{Q}^1_s-\hat{Q}^2_s\right\|_{L_2(H)}^2\right],
            \end{equation*}
            where $\kappa>0$ will be chosen appropriately later on.
            Denoting $\Delta^P_t:=P^{(\hat{P}^1,\hat{Q}^1)}_t-P^{(\hat{P}^2,\hat{Q}^2)}_t$ and $\Delta^Q_t:=Q^{(\hat{P}^1,\hat{Q}^1)}_t-Q^{(\hat{P}^2,\hat{Q}^2)}_t$ and using Itô's formula, we see that
            \begin{align*}
                0=&e^{\kappa T}\E\left[\left\|\Delta^P_T\right\|_H^2 \right] \\
                = &e^{\kappa t}\E\left[\left\|\Delta^P_t\right\|_H^2\right]
                +\E\left[\int_t^T e^{\kappa s} 2\langle -L\left(\Delta^P_s\right)-F^*_x(\theta_s)\left(\Delta^P_s\right) 
                -B_x^*(\theta_s)\left(\Delta^Q_s\right)\right. \\
                &- \left.\hat{\E}\left[\partial_\mu F^*(\hat{\theta}_s)(X^\alpha_s)\left(\hat{P}^1_s-\hat{P}^2_s\right) 
                + \partial_\mu B^*(\hat{\theta}_s)(X^\alpha_s)\left(\hat{Q}^1_s-\hat{Q}^2_s\right)\right]
                ,\Delta^P_s\rangle_V\right.\\
                &\left.+ e^{\kappa s}\left\|\Delta^Q_s\right\|^2_{L_2(H)} ds\right]
                +\kappa \int_t^T e^{\kappa s}\E\left[\left\|\Delta^P_s\right\|_H^2 \right] ds
            \end{align*}
            and rearranging and using Fubini gives
            \begin{align*}
                &\E\left[e^{\kappa t}\left\|\Delta^P_t\right\|_H^2 +\int_t^T e^{\kappa s}\left\|\Delta^Q_s\right\|^2_{L_2(H)} ds\right]\\
                =&\int_t^T e^{\kappa s}\E\left[ 2\langle L\left(\Delta^P_s\right)+F_x(\theta_s)\left(\Delta^P_s\right),\Delta^P_s\rangle_V 2\langle B_x(\theta_s)\left(\Delta^P_s\right), \Delta^Q_s\rangle_{L_2(H)} \right]\\
                &+ 2 e^{\kappa s}\hat{\E}\left[ \E\left[\langle \partial_\mu F(\hat{\theta}_s)(X^\alpha_s)\left(\Delta^P_s\right),\hat{P}^1_s-\hat{P}^2_s\rangle_V\right.\right.\\
                &\left.\left.+\langle \partial_\mu B(\hat{\theta}_s)(X^\alpha_s)\left(\left(\Delta^P_s\right)\right)
                ,\hat{Q}^1_s-\hat{Q}^2_s\rangle_{L_2(H)}\right]\right]
                - \kappa e^{\kappa s}\E\left[\left\|\Delta^P_s\right\|_H^2 \right] ds.
            \end{align*}
            Now, using (H3$^\prime$), (H4$^\prime$), Lemma \ref{Lemma:Boundedness of Diffusion derivatives}, Youngs's inequality and the a priori bound, we arrive at
            \begin{align*}
                &\E\left[e^{\kappa t}\left\|\Delta^P_t\right\|_H^2 +\int_t^T e^{\kappa s}\left\|\Delta^Q_s\right\|^2_{L_2(H)} ds\right]\\
                \leq & \int_t^T e^{\kappa s}\E\left[ c_3\left\| \Delta^P_s \right\|^2_H +\frac{1}{2}\left\| \Delta^Q_s\right\|_{L_2(H)}^2 \right]- \kappa e^{\kappa s}\E\left[\left\|\Delta^P_s\right\|_H^2 \right] ds\\
                &+ e^{\kappa s}\hat{\E}\left[ 
                c_3\left\|\hat{P}^1_s-\hat{P}^2_s\right\|^2_H+\frac{1}{4}\left\| \hat{Q}^1_s-\hat{Q}^2_s\right\|_{L_2(H)}^2 \right],
            \end{align*}
            and, choosing $\kappa$ appropriately, this leads to 
            \begin{align*}
                &\E\left[e^{\kappa t}\|\Delta^P_t\|_H^2 +\frac{1}{2}\int_t^T e^{\kappa s}\|\Delta^Q_s\|^2_{L_2(H)} ds\right]\\
                \leq &
                \int_t^T c_3 ds
                \sup_{s\in[t,T]}e^{\kappa s} \hat{\E}\left[\|\hat{P}^1_s-\hat{P}^2_s\|^2_H\right]
                +\frac{1}{4}\hat{\E}\left[\int_t^T e^{\kappa s}\| \hat{Q}^1_s-\hat{Q}^2_s\|_{L_2(H)}^2 ds\right].
            \end{align*}
            We see that, $\Psi$ is a contraction if $t$ is close enough to $T$. But we can extent this solution (backwards) to a full solution by doing multiple steps.
        \end{proof}
    \subsection{Representation of the Gradient of the Cost Functional}\label{subsection:Representation of the Gradient of the Cost Functional}
        \begin{corollary} \label{Corollary:Ito Product Formula for Z and P}
            Let Assumptions \ref{assumption:Linear operator}, \ref{Assumption:Standard Assumptions for Existence and Uniqueness of State Equation}, \ref{Assumption:f and g are bounded below and quadratically}, \ref{Assumptions:DifferentiabilityOfCoefficients} and \ref{Assumption: f,g diff. and moment bounds} hold. Then, for $\alpha,\beta\in\A$ and $X^\alpha$ the solution to the state equation \eqref{eq:TheBase-State-SPDE}, $Z$ the solution to equation \eqref{eq:Gateaux derivative SPDE} and $(P,Q)$ the solution to the adjoint equation \eqref{eq:Adjoint Equation 1}-\eqref{eq:Adjoint Equation 3}, it holds
            \begin{align*}
                \mathbb{E}\left[\langle P_T, Z_T\rangle_H\right]
                =\mathbb{E}&\left[\int_0^T\langle  F_\alpha\left(\theta_t\right)(\beta_t), P_t\rangle_V+\langle B_\alpha\left(\theta_t\right) (\beta_t), Q_t\rangle_{ L_2(H)} \right.\\
                &\left. - f_x\left(\theta_t\right) \left(Z_t\right)
                - \hat{\mathbb{E}} \left[\partial_\mu f\left(\theta_t\right) \left(\hat{X}^\alpha_t\right)\left(\hat{Z}_t\right)\right]dt\right].
            \end{align*}
        \end{corollary}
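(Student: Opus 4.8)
The plan is to apply the Itô product formula to $t\mapsto\langle P_t,Z_t\rangle_H$ on the Gelfand triple $V\subset H\subset V^*$. Both processes are forward Itô processes there: $Z$ solves \eqref{eq:Gateaux derivative SPDE} with $Z_0=0$, while from the end of the proof of Theorem~\ref{theorem:adjoint existence and uniqueness} the adjoint $P$ admits the forward representation
\[
P_t=P_0+\int_0^t\!\Big({-}LP_s-F_x^*(\theta_s)P_s-B_x^*(\theta_s)Q_s-f_x(\theta_s)-\hat{\mathbb{E}}\big[(\partial_\mu F(\hat\theta_s)(X_s))^*\hat P_s\big]-\hat{\mathbb{E}}\big[(\partial_\mu B(\hat\theta_s)(X_s))^*\hat Q_s\big]-\hat{\mathbb{E}}\big[\partial_\mu f(\hat\theta_s)(X_s)\big]\Big)\,ds+\int_0^t Q_s\,dW_s .
\]
First I would record that the a priori bounds of Lemma~\ref{Lemma:a priori-Estimate}, the estimate \eqref{eq:estimate for Z} for $Z$, the energy estimate for $(P,Q)$ from the proof of Theorem~\ref{theorem:adjoint existence and uniqueness}, together with the polynomial growth conditions (H4$^\prime$), Lemma~\ref{Lemma:Boundedness of Diffusion derivatives} and Assumption~\ref{Assumption: f,g diff. and moment bounds}, make all the integrands below lie in $L^1(\Omega\times[0,T])$. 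Hence the bilinear Itô formula — obtained by polarisation from \cite{Liu2015}, Theorem~4.2.5, applied to $\|P\pm Z\|_H^2$ — is applicable, the two stochastic integral terms have vanishing expectation, and since $Z_0=0$ one gets $\mathbb{E}[\langle P_T,Z_T\rangle_H]=\mathbb{E}\int_0^T R_t\,dt$, where $R_t$ is the drift of $Z$ paired with $P$ in $\langle\cdot,\cdot\rangle_V$, plus the drift of $P$ paired with $Z$, plus the trace term $\langle Q_t,[\text{diffusion of }Z_t]\rangle_{L_2(H)}$.

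Next I would collect the ``local'' terms in $R_t$, which cancel pairwise: $\langle P_t,LZ_t\rangle_V-\langle LP_t,Z_t\rangle_V=0$ by self-adjointness of $L$ on the triple; $\langle P_t,F_x(\theta_t)Z_t\rangle_V-\langle F_x^*(\theta_t)P_t,Z_t\rangle_V=0$ by definition of the adjoint; and, using $B_x^*(\theta_t)Q_t\in H$ so that the $V^*$--$V$ pairing reduces to the $H$-inner product, $\langle Q_t,B_x(\theta_t)Z_t\rangle_{L_2(H)}-\langle B_x^*(\theta_t)Q_t,Z_t\rangle_V=\langle Q_t,B_x(\theta_t)Z_t\rangle_{L_2(H)}-\langle Q_t,B_x(\theta_t)Z_t\rangle_{L_2(H)}=0$; thus the Itô correction term exactly absorbs the $B_x^*$-part of the adjoint drift.

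The distributional terms are handled by Fubini and exchangeability. Writing $\hat F_Y(\hat\theta_t)(\hat Z_t)(\omega)=\hat{\mathbb{E}}[\partial_\mu F(t,X_t(\omega),\cL(X_t),\alpha_t(\omega))(\hat X_t)(\hat Z_t)]$ and using that $(X_t,Z_t,P_t,Q_t)$ and its independent copy $(\hat X_t,\hat Z_t,\hat P_t,\hat Q_t)$ have the same joint law, one relabels the two probability spaces to obtain
\[
\mathbb{E}\big[\langle P_t,\hat F_Y(\hat\theta_t)(\hat Z_t)\rangle_V\big]=\mathbb{E}\big[\langle \hat{\mathbb{E}}[(\partial_\mu F(\hat\theta_t)(X_t))^*\hat P_t],Z_t\rangle_H\big],
\]
which cancels the corresponding term of the adjoint drift; the same computation with $(Q_t,B)$ in place of $(P_t,F)$ cancels $\langle Q_t,\hat B_Y(\hat\theta_t)(\hat Z_t)\rangle_{L_2(H)}$ against the $\partial_\mu B$-term, and applied to $\partial_\mu f$ it turns $-\langle\hat{\mathbb{E}}[\partial_\mu f(\hat\theta_t)(X_t)],Z_t\rangle_H$ into $-\hat{\mathbb{E}}[\partial_\mu f(\theta_t)(\hat X_t)(\hat Z_t)]$. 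Joint measurability of these integrands (Lemma~\ref{lemma:joint measurability}) legitimises the Fubini steps, and the growth bounds ensure the adjoint operators $(\partial_\mu F(\hat\theta_t)(X_t))^*$ and $(\partial_\mu B(\hat\theta_t)(X_t))^*$ are well-defined bounded maps $V\to H$, resp. $L_2(H)\to H$. After all cancellations the integrand reduces to $\langle F_\alpha(\theta_t)(\beta_t),P_t\rangle_V+\langle B_\alpha(\theta_t)(\beta_t),Q_t\rangle_{L_2(H)}-f_x(\theta_t)(Z_t)-\hat{\mathbb{E}}[\partial_\mu f(\theta_t)(\hat X_t)(\hat Z_t)]$, which is the claimed identity.

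I expect the main obstacle to be not any single algebraic step but the simultaneous integrability bookkeeping: one must check that every term produced by the Itô expansion is genuinely in $L^1(\Omega\times[0,T])$ so that Fubini, the discarding of the martingale terms, and the relabeling of the independent copy are all justified — this is exactly where the strict moment gap $q>p'+2$, the a priori estimates for $X$, $Z$ and $(P,Q)$, and the polynomial growth of the coefficient derivatives are used together. A secondary technical point is that \cite{Liu2015} states the variational Itô formula only for the squared norm, so the bilinear version relies on the preliminary observation, already made in the proof of Theorem~\ref{theorem:adjoint existence and uniqueness}, that $P$ solves a forward equation in the Gelfand-triple sense, after which polarisation applies.
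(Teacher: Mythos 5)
Your proposal is correct and follows essentially the same route as the paper: apply the Itô product rule to $\langle P_t,Z_t\rangle_H$, take expectations so the martingale terms vanish, cancel the local terms via the adjoints and the Itô correction, and use Fubini together with the exchangeability of the independent copies to cancel the $\partial_\mu F$ and $\partial_\mu B$ terms and to rewrite the $\partial_\mu f$ term. Your additional bookkeeping of the integrability needed to justify Fubini and the vanishing of the stochastic integrals, and the polarisation remark for the bilinear Itô formula, are details the paper leaves implicit but are consistent with its argument.
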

        \begin{proof}
            An application of the Itô product rule, taking the expectation and using Fubini results in
            \begin{align*}
                &\mathbb{E}\left[\langle P_T, Z_T\rangle_H\right] \\
                =& \E\left[\int_0^T \langle F_\alpha(\theta^\alpha_t)  (\beta_t)+\hat{\E}\left[\partial_\mu F(\theta^\alpha_t)(\hat{X}^\alpha_t)(\hat{Z}_t)\right] , P_t\rangle_V d t-f_x\left(t, X^\alpha_t, \cL\left(X^\alpha_t\right), \alpha_t\right) \right.\\
                & -\hat{\mathbb{E}}\left[ \langle \partial_\mu F(\hat{\theta}^\alpha_t)(X^\alpha_t)(Z_t),\hat{P}_t\rangle_V
                +\langle \partial_\mu B(\hat{\theta}^\alpha_t)(X^\alpha_t)(Z_t),\hat{Q}_t\rangle_{L_2(H)}\right]\\
                &\left.-\hat{\mathbb{E}}\left[  \partial_\mu f(\hat{\theta}^\alpha_t)( X^\alpha_t)\right] d t + \int_0^T \langle B_\alpha(\theta^\alpha_t) (\beta_t)+\hat{\E}\left[\partial_\mu B(\theta^\alpha_t)(\hat{X}^\alpha_t)(\hat{Z}_t)\right] ,  Q_t\rangle_{L_2(H)} dt\right]\\
                =&\mathbb{E}\left[\int_0^T\langle  F_\alpha\left(\theta_t\right)(\beta_t), P_t\rangle_V+\langle B_\alpha\left(\theta_t\right) (\beta_t), Q_t\rangle_{ L_2(H)} \right.\\
                &\left. - f_x\left(\theta_t\right) \left(Z_t\right)
                - \hat{\mathbb{E}} \left[\partial_\mu f\left(\theta_t\right) (\hat{X}^\alpha_t)(\hat{Z}_t)\right]dt\right].
            \end{align*}
        \end{proof}
        \begin{corollary}
            Let Assumptions \ref{assumption:Linear operator}, \ref{Assumption:Standard Assumptions for Existence and Uniqueness of State Equation}, \ref{Assumption:f and g are bounded below and quadratically}, \ref{Assumptions:DifferentiabilityOfCoefficients} and \ref{Assumption: f,g diff. and moment bounds} hold. Then, for $\alpha,\beta\in\A$ and $X^\alpha$ the solution to the state equation \eqref{eq:TheBase-State-SPDE} and $(P,Q)$ the solution to the adjoint equation \eqref{eq:Adjoint Equation 1}-\eqref{eq:Adjoint Equation 3}, it holds
            \begin{equation*}
                d J(\alpha) \cdot \beta
                =\mathbb{E}\left[\int_0^T H_\alpha\left(t, X^\alpha_t, \cL\left(X^\alpha_t\right),\alpha_t, P_t, Q_t\right) \left(\beta_t\right) d t\right].
            \end{equation*}
        \end{corollary}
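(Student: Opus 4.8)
The plan is to assemble the claimed identity from three facts already in hand: the representation of $dJ(\alpha)\cdot\beta$ from Corollary~\ref{Corollary:Representation of Gateaux of Cost Function}, the terminal condition \eqref{eq:Adjoint Equation 3} of the adjoint equation, and the Itô product identity of Corollary~\ref{Corollary:Ito Product Formula for Z and P}. Once the mean-field ``Fubini'' (exchangeability) trick is set up, the proof is pure bookkeeping of cancellations.

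First I would rewrite the terminal contribution in Corollary~\ref{Corollary:Representation of Gateaux of Cost Function}, namely $\mathbb{E}[g_x(X^\alpha_T,\cL(X^\alpha_T))(Z_T) + \hat{\mathbb{E}}[\partial_\mu g(X^\alpha_T,\cL(X^\alpha_T))(\hat X^\alpha_T)(\hat Z_T)]]$, as $\mathbb{E}[\langle P_T, Z_T\rangle_H]$. Using \eqref{eq:Adjoint Equation 3} we have $\mathbb{E}[\langle P_T, Z_T\rangle_H] = \mathbb{E}[g_x(X^\alpha_T,\cL(X^\alpha_T))(Z_T)] + \mathbb{E}[\langle \hat{\mathbb{E}}[\partial_\mu g(\hat X^\alpha_T,\cL(X^\alpha_T))(X^\alpha_T)], Z_T\rangle_H]$; the first summand already matches, and for the second I would pass to the full product expectation $\mathbb{E}\otimes\hat{\mathbb{E}}$ and relabel the two i.i.d.\ copies $(X^\alpha_T, Z_T)$ and $(\hat X^\alpha_T, \hat Z_T)$, which is legitimate by the joint measurability of $(x,\mu,y,z)\mapsto\partial_\mu g(x,\mu)(y)(z)$ from Lemma~\ref{lemma:joint measurability} together with the integrability furnished by Assumption~\ref{Assumption: f,g diff. and moment bounds} and the a priori bounds of Lemma~\ref{Lemma:a priori-Estimate} and \eqref{eq:estimate for Z}. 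This yields exactly $\mathbb{E}[\hat{\mathbb{E}}[\partial_\mu g(X^\alpha_T,\cL(X^\alpha_T))(\hat X^\alpha_T)(\hat Z_T)]]$, so the two terminal expressions agree.

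Next I would substitute Corollary~\ref{Corollary:Ito Product Formula for Z and P} for $\mathbb{E}[\langle P_T, Z_T\rangle_H]$ into the representation of $dJ(\alpha)\cdot\beta$. The summands $\mathbb{E}[\int_0^T f_x(\theta_t)(Z_t)\,dt]$ and $\mathbb{E}[\int_0^T\hat{\mathbb{E}}[\partial_\mu f(\theta_t)(\hat X^\alpha_t)(\hat Z_t)]\,dt]$ then cancel against the corresponding terms with opposite sign in Corollary~\ref{Corollary:Ito Product Formula for Z and P}, leaving $dJ(\alpha)\cdot\beta = \mathbb{E}[\int_0^T f_\alpha(\theta_t)(\beta_t) + \langle F_\alpha(\theta_t)(\beta_t), P_t\rangle_V + \langle B_\alpha(\theta_t)(\beta_t), Q_t\rangle_{L_2(H)}\,dt]$. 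Since $L$ does not depend on $\alpha$, differentiating $H(t,x,\mu,\alpha,p,q) = \langle Lx + F(t,x,\mu,\alpha), p\rangle_V + \langle B(t,x,\mu,\alpha), q\rangle_{L_2(H)} + f(t,x,\mu,\alpha)$ in $\alpha$ gives $H_\alpha(\theta_t, P_t, Q_t)(\beta_t) = \langle F_\alpha(\theta_t)(\beta_t), P_t\rangle_V + \langle B_\alpha(\theta_t)(\beta_t), Q_t\rangle_{L_2(H)} + f_\alpha(\theta_t)(\beta_t)$, which is precisely the asserted formula.

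The only genuine subtlety is the exchangeability swap for the $\partial_\mu g$ term (and, implicitly, the $\partial_\mu f$ term already handled in Corollary~\ref{Corollary:Ito Product Formula for Z and P}): one must track which independent copy each $\hat{\mathbb{E}}$ integrates out and confirm that relabeling the i.i.d.\ pairs is valid, i.e.\ that $\langle\partial_\mu g(x,\cL(X^\alpha_T))(y),z\rangle_H$ is jointly measurable and $\P\otimes\P$-integrable under the joint law of $(X^\alpha_T, Z_T, \hat X^\alpha_T, \hat Z_T)$. Both points follow from Lemma~\ref{lemma:joint measurability}, Assumption~\ref{Assumption: f,g diff. and moment bounds}, and the a priori estimates; all remaining manipulations are algebraic.
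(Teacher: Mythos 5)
Your proposal is correct and follows essentially the same route as the paper: substitute the terminal condition \eqref{eq:Adjoint Equation 3} and the It\^o product identity of Corollary~\ref{Corollary:Ito Product Formula for Z and P} into the Gateaux-derivative representation of Corollary~\ref{Corollary:Representation of Gateaux of Cost Function}, cancel the $f_x$ and $\partial_\mu f$ terms, and recognize $H_\alpha$. The only difference is that you make explicit the Fubini/relabeling of the i.i.d.\ copies needed to reconcile $\hat{\mathbb{E}}[\partial_\mu g(\hat X_T,\cL(X^\alpha_T))(X^\alpha_T)]$ with $\hat{\mathbb{E}}[\partial_\mu g(X^\alpha_T,\cL(X^\alpha_T))(\hat X^\alpha_T)(\hat Z_T)]$, a step the paper performs only implicitly through its copy notation.
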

        \begin{proof}
            Combining the above Corollary \ref{Corollary:Ito Product Formula for Z and P} and the terminal condition
            \begin{equation*}
                P_T= g_x\left(X^\alpha_T, \cL\left(X^\alpha_T\right)\right)+\hat{\mathbb{E}}\left[\partial_\mu g\left(X^\alpha_T, \cL\left(X^\alpha_T\right)\right)(\hat{X}^\alpha_T)\right]
            \end{equation*}
            we get
            \begin{align*}
                &\mathbb{E}\left[\langle g_x\left(X^\alpha_T, \cL\left(X^\alpha_T\right)\right)+\hat{\mathbb{E}}\left[\partial_\mu g\left(X^\alpha_T, \cL\left(X^\alpha_T\right)\right)(\hat{X}^\alpha_T)\right], Z_T\rangle_H\right]\\
                =&\mathbb{E}\left[\int_0^T\langle  F_\alpha\left(\theta^\alpha_t\right)(\beta_t), P_t\rangle_V+\langle B_\alpha\left(\theta^\alpha_t\right) (\beta_t), Q_t\rangle_{ L_2(H)} - f_x\left(\theta^\alpha_t\right) \left(Z_t\right)\right.\\
                &\left.- \hat{\mathbb{E}} \left[\partial_\mu f\left(\theta^\alpha_t\right) \left(\hat{X}^\alpha_t\right)\left(\hat{Z}_t\right)\right]dt\right].
            \end{align*}
            Plugging this into the representation of the Gateaux derivative of $J$ from Corollary \ref{Corollary:Representation of Gateaux of Cost Function}, we immediately get the result.
        \end{proof}
    \subsection{The Pontryagin Maximum Principle}\label{subsection:The Pontryagin Maximum Principle}
        \begin{theorem}\label{theorem:Pontryagin Maximum Principle}
            Let Assumptions \ref{assumption:Linear operator}, \ref{Assumption:Standard Assumptions for Existence and Uniqueness of State Equation}, \ref{Assumption:f and g are bounded below and quadratically}, \ref{Assumptions:DifferentiabilityOfCoefficients}, \ref{Assumption: f,g diff. and moment bounds} and \ref{Assumption:Convexity of F,B,f} hold. Further, let $\alpha^* \in \mathbb{A}$ be an optimal control for the problem. If $X^{\alpha^*}$ is the solution to the state equation \eqref{eq:TheBase-State-SPDE} and $(P, Q)$ is the solution to the corresponding adjoint equation, then we have for all $\alpha \in U$
            \begin{equation*}
                H\left(t, X^{\alpha^*}_t, \cL(X^{\alpha^*}_t), \alpha^*_t, P_t, Q_t\right) \leq H\left(t, X^{\alpha^*}_t, \cL(X^{\alpha^*}_t), \alpha, P_t, Q_t\right)\quad \P\otimes dt-\text{a.s.}
            \end{equation*}
        \end{theorem}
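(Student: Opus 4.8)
The plan is to derive the pointwise minimum principle from the first--order optimality condition attached to the Gâteaux derivative of $J$, then to upgrade the integrated inequality to a pointwise one by exploiting the convexity of the Hamiltonian in the control variable and a Lebesgue--point argument. \textbf{Step 1 (first--order variational inequality).} First observe that $\A$ is convex: the functional $\alpha\mapsto\int_0^T\|\alpha(t)\|_U^q\,dt$ is convex since $q\ge 2$, so its sublevel set is convex, and convex combinations of progressively measurable processes stay progressively measurable. Hence for every $\beta\in\A$ and $\epsilon\in(0,1]$ we have $\alpha^*+\epsilon(\beta-\alpha^*)=(1-\epsilon)\alpha^*+\epsilon\beta\in\A$. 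Optimality of $\alpha^*$ gives $J(\alpha^*+\epsilon(\beta-\alpha^*))\ge J(\alpha^*)$; dividing by $\epsilon$, letting $\epsilon\downarrow 0$, and using the Gâteaux differentiability of $J$ together with the representation of $dJ$ from Section~\ref{subsection:Representation of the Gradient of the Cost Functional} (and linearity of $\beta\mapsto Z^{\alpha^*,\beta}$ in the perturbation), we obtain, with $(P,Q)$ the adjoint pair attached to $\alpha^*$,
\begin{equation*}
0\le dJ(\alpha^*)\cdot(\beta-\alpha^*)=\E\!\left[\int_0^T H_\alpha\!\left(t,X^{\alpha^*}_t,\cL(X^{\alpha^*}_t),\alpha^*_t,P_t,Q_t\right)(\beta_t-\alpha^*_t)\,dt\right]\qquad\forall\,\beta\in\A.
\end{equation*}

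\textbf{Step 2 (integrated minimum principle via convexity).} By Assumption~\ref{Assumption:Convexity of F,B,f} the map $U\ni\alpha\mapsto H(t,x,\mu,\alpha,p,q)$ is convex for every fixed $(t,x,\mu,p,q)$, so the subgradient inequality for convex Fréchet-differentiable functions yields $H(t,x,\mu,b,p,q)-H(t,x,\mu,a,p,q)\ge H_\alpha(t,x,\mu,a,p,q)(b-a)$ for all $a,b\in U$. Applying this $\mathbb{P}\otimes dt$-pointwise with $a=\alpha^*_t$, $b=\beta_t$, $(x,\mu,p,q)=(X^{\alpha^*}_t,\cL(X^{\alpha^*}_t),P_t,Q_t)$, integrating, and inserting Step 1 gives
\begin{equation*}
\E\!\left[\int_0^T H\!\left(t,X^{\alpha^*}_t,\cL(X^{\alpha^*}_t),\beta_t,P_t,Q_t\right)dt\right]\ge \E\!\left[\int_0^T H\!\left(t,X^{\alpha^*}_t,\cL(X^{\alpha^*}_t),\alpha^*_t,P_t,Q_t\right)dt\right]\qquad\forall\,\beta\in\A;
\end{equation*}
finiteness of both sides follows from the growth bounds in Assumptions~\ref{Assumption:Standard Assumptions for Existence and Uniqueness of State Equation} and~\ref{Assumption:f and g are bounded below and quadratically} together with the a priori estimates on $X^{\alpha^*}$, $P$ and $Q$.

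\textbf{Step 3 (localisation in time).} Fix $a\in U$ (when $\A$ carries the hard $L^q$-ball constraint one first restricts to a bounded convex $U_1\subset U$ as in the Remark following Assumption~\ref{Assumption:Convexity of F,B,f}, so that spike perturbations stay admissible, or argues by truncation). The maps $t\mapsto\E[H(t,X^{\alpha^*}_t,\cL(X^{\alpha^*}_t),a,P_t,Q_t)]$ and $t\mapsto\E[H(t,X^{\alpha^*}_t,\cL(X^{\alpha^*}_t),\alpha^*_t,P_t,Q_t)]$ are in $L^1([0,T])$, so Lebesgue-a.e.\ $t_0$ is a Lebesgue point of both. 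For such $t_0$ and small $\epsilon>0$ put $\beta^\epsilon_t:=a\,\mathbf{1}_{[t_0,t_0+\epsilon]}(t)+\alpha^*_t\,\mathbf{1}_{[0,T]\setminus[t_0,t_0+\epsilon]}(t)\in\A$; plugging $\beta^\epsilon$ into Step 2, cancelling the common part of the integrals, dividing by $\epsilon$ and sending $\epsilon\downarrow 0$ yields $\E[H(t_0,X^{\alpha^*}_{t_0},\cL(X^{\alpha^*}_{t_0}),\alpha^*_{t_0},P_{t_0},Q_{t_0})]\le\E[H(t_0,X^{\alpha^*}_{t_0},\cL(X^{\alpha^*}_{t_0}),a,P_{t_0},Q_{t_0})]$. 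Since the exceptional null set can be chosen independently of $a$ (run the argument along a countable dense subset of $U$ and use continuity of $H$ in $\alpha$), this is the claimed inequality for all $\alpha\in U$.

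\textbf{Main obstacle.} The substantive point is the passage in Step 2 from convexity of the \emph{vector-valued} coefficients $F$ and $B$ to genuine convexity of $\alpha\mapsto H(t,x,\mu,\alpha,p,q)$ for the realised random values $(p,q)=(P_t,Q_t)$ — this is precisely what Assumption~\ref{Assumption:Convexity of F,B,f} is arranged to provide. A secondary nuisance is keeping the spike perturbations inside $\A$ given the a.s.\ $L^q$-constraint, which is why one restricts to a bounded control set as in the cited Remark. Beyond these, the argument is a routine combination of the optimality condition, the subgradient inequality, and Lebesgue differentiation.
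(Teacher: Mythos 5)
Your proposal is correct and follows essentially the same route as the paper: the first-order condition $dJ(\alpha^*)\cdot(\beta-\alpha^*)\ge 0$, the convexity of $\alpha\mapsto H$ from Assumption \ref{Assumption:Convexity of F,B,f} to pass from $H_\alpha$ to differences of $H$, and a spike perturbation to localise in time. You are in fact more careful than the paper on two points it leaves implicit — verifying that the spike perturbation remains in $\A$ under the hard a.s.\ $L^q$-constraint, and choosing the exceptional null set uniformly over $\alpha\in U$ via a countable dense subset — both of which are legitimate refinements rather than deviations.
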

        \begin{proof}
            By the optimality of $\alpha^*$ we get $d J(\alpha^*) \cdot(\beta-\alpha^*)\geq 0$ for any $\beta\in\A$ and hence by the convexity from Assumption \ref{Assumption:Convexity of F,B,f}
            \begin{equation*}
                \int_0^T \mathbb{E}\left[H\left(t, X^{\alpha^*}_t, \cL(X^{\alpha^*}_t), \beta_t, P_t, Q_t\right)-H\left(t, X^{\alpha^*}_t, \cL(X^{\alpha^*}_t), \alpha^*_t, P_t, Q_t\right)\right] d t \geq 0.
            \end{equation*}
            Let $C \subset[0, T] \times \Omega$ be a progressively measurable set
            and $\alpha \in A$.  Plugging
            \begin{equation*}
                \beta_t= \begin{cases}\alpha & \text { for } t \in C \\ \alpha^*_t & \text { otherwise. }\end{cases}
            \end{equation*}
            which is now an admissible control, into the above inequality, gives the result.
        \end{proof}

\section{Existence of a Deterministic Optimal Control} \label{section:existence of an optimal control}
    We want to prove, that, at least, when we only allow for deterministic controls, there is at least one optimal control, that minimizes the cost functional. We use a compactness method, that is used by \cite{Pardoux2021} to prove existence of solutions to SPDE. For this, we will show tightness of the laws of an approximating sequence on the path spaces $\Omega_1=L^2([0,T],V)_{\text{weak}}$, $\Omega_2=C([0,T],H_{\text{weak}})$ and $\Omega_3=L^2([0,T],H)$ and then extract a weakly convergent subsequence.
    According to  \cite{Pardoux2021}, this method is justified in a thesis of M. Viot, that is not published, where the justification for the application of a Prokhorov theorem is given. We even want a sequence of almost surely convergent random variables with the same laws, so we apply the generalized Skorokhod embedding theorem from \cite{Jakubowski1998} (cf. Appendix \refforappendixone). Our control space is now
    \begin{equation*}
        \alpha\in \mathbb{A}:=\left\{\alpha\in L^q([0,T],U)\middle|\int_0^T\left\| \alpha(t)\right\|_U^q d t \leq K\right\}.
    \end{equation*}
    Note, that all the results from section \ref{section:Well-Posedness of the Control Problem} still hold, if we have assumptions \ref{assumption:Linear operator}, \ref{Assumption:Standard Assumptions for Existence and Uniqueness of State Equation} and \ref{Assumption:f and g are bounded below and quadratically}. We do not need Assumption \ref{Assumptions:DifferentiabilityOfCoefficients}, \ref{Assumption: f,g diff. and moment bounds} and \ref{Assumption:Convexity of F,B,f} here, instead we additionally assume the following.
    \begin{assumption}[Compact embedding] \label{assumption:V compactly in H}
        $V$ embeds compactly into $H$.
    \end{assumption}
    \begin{assumption}[Continuity] \label{assumption:Condition for existence of controls}
        \begin{enumerate}[(i)]
            \item There exist $A_6>0$ and $1\leq r\leq q^\prime<q$, such that for for any $u(t),v(t)\in L^2([0,T],V)\cap L^q([0,T],H)$, $\mu,\nu:[0,T]\to \cP_q(H)$ measurable, $\alpha\in \A$ and $0\neq v\in V$
            \begin{align*}
                &\frac{1}{A_6\|v\|_V}\int_0^T \langle F(t,u_t,\mu_t,\alpha_t)-F(t,v_t,\nu_t,\alpha_t),v\rangle_Vdt\\
                \leq &
                \rho(u,v,\mu,\nu,\alpha)\int_0^T\left\|u_t-v_t\right\|^{r}_H+\W_{q^\prime}(\mu_t,\nu_t)dt
            \end{align*}
            and
            \begin{align*}
                &\frac{1}{A_6\|v\|_V}\int_0^T f(t,u_t,\mu_t,\alpha_t)-f(t,v_t,\nu_t,\alpha_t)dt\\
                \leq &
                \rho(u,v,\mu,\nu,\alpha)\int_0^T\left\|u_t-v_t\right\|^{r}_H+\W_{q^\prime}(\mu_t,\nu_t)dt
            \end{align*}
            where
            \begin{align*}
                \rho(u,v,\mu,\nu,\alpha)
                =&1+\int_0^T\|u_t\|^{q^\prime-r}_{H}+\|v_t\|^{q^\prime-r}_{H} +\mu(\|\cdot\|_H^{q})+\nu(\|\cdot\|_H^{q})
                +\|\alpha_t\|^q_Udt\\
                &+ \left(\int_0^T \|u_t\|^2_V+\|v_t\|^2_V dt\right)^{\frac{q^\prime-r}{2}}.
            \end{align*}
            \item For all $\varphi \in C\left([0, T] ; H\right)$, $\mu \in C\left([0, T] ; \mathcal{P}_2\left(H\right)\right)$ and $h\in H$
            \begin{align*}
                &\mathbb{A} \rightarrow L^2\left([0, T] , V^*\right),\quad \alpha \mapsto F\left(\cdot, \varphi_\cdot, \mu_\cdot, \alpha_\cdot\right),\\
                & \mathbb{A} \rightarrow L^2\left([0, T] ;H\right),\quad \alpha \mapsto (B^*B)\left(\cdot, \varphi_\cdot, \mu_\cdot, \alpha_\cdot\right)h,
            \end{align*}
            are weakly-weakly sequentially continuous and 
            \begin{equation*}
                \mathbb{A} \rightarrow \mathbb{R}, \quad \alpha \mapsto \int_0^T f\left(t, \varphi_t, \mu_t, \alpha_t\right) d t
            \end{equation*}
            is weakly sequentially lower semicontinuous.
            \item $g$ is lower semi-continuous in the following sense: If $x_n\to x$ weakly in $H$, $\mu_n\to \mu$ weakly in $\cP(H_{\text{weak}})$ and $\sup_{n\in\N}\mu_n\left(\|\cdot\|^q_H\right)<\infty$, then
            \begin{equation*}
                \liminf_{n\to\infty} g(x_n,\mu_n)\geq g(x,\mu).
            \end{equation*}
            Here, $H_{\text{weak}}$ denotes the Hilbert space $H$ equipped with its weak (in the functional analytic sense) topology and $\cP(H_{\text{weak}})$ are the probability measures on $H_\text{weak}$.
        \end{enumerate}
    \end{assumption} 
    \begin{remark}\label{remark:Sufficient conditions for existence}
        \begin{enumerate}
            \item This abstract assumption will be needed in the reaction-diffusion equation case (cf. section \ref{example:Reaction-diffusion-equation}). 
            \item As this assumption has quite a complicated structure, we give simpler sufficient conditions replacing the first and the third condition.
            \begin{enumerate}[(i)]
                \item[(i)$^\prime$] $F(t,x,\mu,\alpha)=F_1(t,x,\mu)+F_2(t,\alpha)$ and $f(t,x,\mu,\alpha)=f_1(t,x,\mu)+f_2(t,\alpha)$, where 
                \begin{equation*}
                    F_1(t,\cdot):H\times \cP_2(H) \to H\text{ and }f_1(t,\cdot):H\times \cP_2(H) \to \R
                \end{equation*}
                   are continuous.
                \item[(iii)$^\prime$] $g(x,\mu)=g_1(x)+g_2(\int_H y\, \mu(dy))$, where $g_1,g_2:H\to \R$ are continuous and convex.
            \end{enumerate}
        \end{enumerate}
    \end{remark}
    \begin{theorem} \label{theorem: existence of an optimal control}
        If Assumptions \ref{assumption:Linear operator}, \ref{Assumption:Standard Assumptions for Existence and Uniqueness of State Equation}, \ref{assumption:V compactly in H} and \ref{assumption:Condition for existence of controls} hold, there is at least one optimal control $\alpha^* \in \mathbb{A}$ such that $J\left(\alpha^*\right)=\inf _{\alpha \in \mathbb{A}} J(\alpha)$.
    \end{theorem}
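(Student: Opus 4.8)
The plan is to use the direct method of the calculus of variations combined with the stochastic compactness method of the appendix. First I would fix a minimizing sequence $(\alpha_n)_{n\in\N}\subset\A$, which exists since $J$ is bounded below (Lemma~\ref{lemma:well-posedness of J}). As $\A$ is a closed, bounded and convex subset of the reflexive space $L^q([0,T],U)$ (convexity of $\alpha\mapsto\int_0^T\|\alpha(t)\|_U^q\,dt$ follows from $q\ge1$), it is weakly sequentially compact, and Mazur's lemma gives weak closedness; so after passing to a subsequence $\alpha_n\wconv\alpha^*$ in $L^q([0,T],U)$ with $\alpha^*\in\A$. Since $q>2$ and $[0,T]$ has finite measure, $L^2([0,T],U)\hookrightarrow L^{q/(q-1)}([0,T],U)$, so in particular $\int_0^T\langle\beta(t),\alpha_n(t)-\alpha^*(t)\rangle_U\,dt\to0$ for every $\beta\in L^2([0,T],U)$, which is exactly the mode of control convergence appearing in Assumption~\ref{assumption:continuity of F and lower cont of f (i),'weak' continuity of B*B (ii), weak lower cont of g (iii)}.

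Next, writing $X^n:=X^{\alpha_n}$ for the unique solution of \eqref{eq:TheBase-State-SPDE} (Theorem~\ref{theorem:Existence of solution (Hong2022)}), the uniform a priori bound $\sup_n\E[\sup_{t}\|X^n_t\|_H^q+(\int_0^T\|X^n_t\|_V^2\,dt)^{q/2}]\le C(1+K)$ from Lemma~\ref{Lemma:a priori-Estimate}, together with the compact embedding $V\hookrightarrow H$ from Assumption~\ref{assumption:V compactly in H}, yields tightness of the laws of the triples $(X^n,X^n,W)$ on $\Omega_1\times\Omega_2\times\Omega_3$ with $\Omega_1=L^2([0,T],V)_{\text{weak}}$, $\Omega_2=C([0,T],H_{\text{weak}})$, $\Omega_3=L^2([0,T],H)$. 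By the generalized Prokhorov theorem~\ref{theorem:Generalized Prokhorov} and a Skorokhod-type representation adapted to these (non-metrizable) spaces, I would obtain, on an auxiliary probability space, copies $(\tilde X^n,\tilde W^n)$ and a limit $(\tilde X,\tilde W)$ with $\cL(\tilde X^n,\tilde W^n)=\cL(X^n,W)$, $\tilde X^n\to\tilde X$ in $\Omega_1\cap\Omega_2\cap\Omega_3$ and $\tilde W^n\to\tilde W$ uniformly, almost surely. Extracting a further subsequence, $\tilde X^n_t\to\tilde X_t$ strongly in $H$ and (by uniform $q$-integrability, $q>2$) $\cL(\tilde X^n_t)=\cL(X^n_t)\to\cL(\tilde X_t)$ in $\W_2$ for $dt$-a.e.\ $t$, while $\tilde X^n_t\wconv\tilde X_t$ in $H$ for every $t$ from the $\Omega_2$-convergence.

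The hard part will be identifying the limit as a solution for $\alpha^*$. Here I would pass to the limit in the (time-integrated, tested) variational formulation: Assumption~\ref{assumption:continuity of F and lower cont of f (i),'weak' continuity of B*B (ii), weak lower cont of g (iii)}(i) is precisely what lets the drift term $\int_0^\cdot\langle F(s,\tilde X^n_s,\cL(\tilde X^n_s),\alpha_n(s)),\psi(s)v\rangle_V\,ds$ converge, and Assumption~\ref{assumption:continuity of F and lower cont of f (i),'weak' continuity of B*B (ii), weak lower cont of g (iii)}(ii) identifies the quadratic variation of the limiting $\tilde{\cF}_t$-martingale $\tilde M_t:=\tilde X_t-X_0-\int_0^t\big(L\tilde X_s+F(s,\tilde X_s,\cL(\tilde X_s),\alpha^*_s)\big)\,ds$ as $\int_0^t(BB^*)(s,\tilde X_s,\cL(\tilde X_s),\alpha^*_s)\,ds$; a martingale representation lemma then produces a cylindrical Wiener process (which one checks coincides with $\tilde W$) such that $\tilde M=\int_0^\cdot B(s,\tilde X_s,\cL(\tilde X_s),\alpha^*_s)\,d\tilde W_s$, i.e.\ $\tilde X$ solves \eqref{eq:TheBase-State-SPDE} with control $\alpha^*$. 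By the uniqueness in Theorem~\ref{theorem:Existence of solution (Hong2022)}, $\cL(\tilde X)=\cL(X^{\alpha^*})$, hence $\cL(\tilde X_t)=\cL(X^{\alpha^*}_t)$ for all $t$. \textbf{This is the main obstacle:} the polynomial growth of $F$ in the variational setting (only demicontinuity available), the nonlocal dependence on $\cL(X_t)$, and the reconstruction of the stochastic integral must all be handled at once — which is exactly what the compactness-method results collected in the appendix are designed to do.

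Finally, since $\cL(\tilde X^n)=\cL(X^n)$ and $\cL(\tilde X)=\cL(X^{\alpha^*})$, both $J(\alpha_n)$ and $J(\alpha^*)$ can be rewritten as expectations over the auxiliary space of $\int_0^T f(t,\tilde X^n_t,\cL(\tilde X^n_t),\alpha_n(t))\,dt+g(\tilde X^n_T,\cL(\tilde X^n_T))$, respectively of its limit. I would then invoke the $dt$-a.s.\ lower semicontinuity of the running cost from Assumption~\ref{assumption:continuity of F and lower cont of f (i),'weak' continuity of B*B (ii), weak lower cont of g (iii)}(i) (the relevant convergences hold $\tilde\omega$-a.s., with $\alpha_n$ deterministic) and the lower semicontinuity of $g$ under $\tilde X^n_T\wconv\tilde X_T$, $\cL(\tilde X^n_T)\to\cL(\tilde X_T)$ with bounded $q$-moments from Assumption~\ref{assumption:continuity of F and lower cont of f (i),'weak' continuity of B*B (ii), weak lower cont of g (iii)}(iii); the $q>2$ moment bounds together with the $q$-growth of $f,g$ from Assumption~\ref{Assumption:f and g are bounded below and quadratically}(ii) give the uniform integrability needed to apply Fatou's lemma to the lower-bounded integrands. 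This yields $\liminf_n J(\alpha_n)\ge J(\alpha^*)$, and since $J(\alpha_n)\to\inf_{\alpha\in\A}J(\alpha)$ we conclude $J(\alpha^*)=\inf_{\alpha\in\A}J(\alpha)$, so $\alpha^*$ is optimal.
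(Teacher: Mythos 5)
Your proposal follows essentially the same route as the paper: weak compactness of $\A$, the uniform a priori bound from Lemma \ref{Lemma:a priori-Estimate}, tightness on $\Omega_1=L^2([0,T],V)_{\text{weak}}$, $\Omega_2=C([0,T],H_{\text{weak}})$, $\Omega_3=L^2([0,T],H)$, Prokhorov--Skorokhod, identification of drift and quadratic variation via Assumption \ref{assumption:continuity of F and lower cont of f (i),'weak' continuity of B*B (ii), weak lower cont of g (iii)} plus a martingale representation theorem and strong uniqueness, and finally Fatou with the semicontinuity of $f$ and $g$. The only point to flesh out is that tightness on $\Omega_2$ and $\Omega_3$ is not a consequence of the a priori bound and the compact embedding alone --- it additionally requires the equicontinuity estimates $\E[|\langle X^{\alpha^n}_t-X^{\alpha^n}_s,v\rangle_H|]\leq c(t-s)^{\tilde q}$ obtained from (H4), (H3b) and the Burkholder--Davis--Gundy inequality, which feed into the Arzel\`a--Ascoli theorem \ref{Theorem:Arzela-Ascoli for Omega_2} and the Aubin--Lions-type lemma quoted from \cite{Pardoux2021}.
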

    \begin{proof}
        We will again use our notation $\theta^\alpha_s=(s, X_s^\alpha, \cL(X_s^\alpha),\alpha_s)$. By Lemma \ref{lemma:well-posedness of J}, the control problem is finite and we can consider a sequence $\alpha^n\in\A$ such that $\lim _{n \rightarrow \infty} J\left(\alpha^n\right)=\inf _{\alpha \in \mathbb{A}} J(\alpha)$. 
        $\A \subset L^q\left([0, T],U\right)$ is bounded, thus, using the Banach-Alaoglu-Theorem, there exists an $\alpha^* \in L^q\left([0, T] ,U\right)$ and a subsequence, also denoted by $\left(\alpha^n\right)_{n \in \mathbb{N}}$, such that $\alpha^n \rightharpoonup \alpha^*$ weakly in $L^q\left([0, T] ,U\right)$.
        Since $\mathbb{A}$ is convex and (strongly) closed, it is also weakly closed and we get $\alpha^* \in \mathbb{A}$, so $\alpha^*$ is indeed an admissible control.\\
        Further, due to the a priori estimate in Lemma \ref{Lemma:a priori-Estimate}, we get
        \begin{equation}
            \sup_{n\in\N}\mathbb{E}\left[\sup _{t \in[0, T]}\left\|X^{\alpha^n}_t\right\|_{H}^q+\left(\int_0^T\left\|X^{\alpha^n}_t\right\|_{V}^2 \mathrm{~d} t\right)^{q/2}\right] <\infty. \label{eq:a priori-Estimate in Existence proof}
        \end{equation}
        We now show tightness of the laws of $X^{\alpha^n}$ in three different spaces. Namely $\Omega_1=L^2([0,T],V)_{\text{weak}}$, $\Omega_2=C([0,T],H_{\text{weak}})$ and $\Omega_3=L^2([0,T],H)$. Here $\Omega_2$ is equipped with the topology of uniform convergence, which is defined as $H_{\text{weak}}$ is a uniform space (cf. the Appendix \refforappendixtwo)\\
        \textbf{$\Omega_1$:} As $L^2([0,T],V)$ is a reflexive Banach space, using the Banach-Alaoglu theorem, it is clear from \eqref{eq:a priori-Estimate in Existence proof}, that the laws of $X^{\alpha^n}$ are tight on $L^2([0,T],V)$ equipped with the weak topology.\\
        \textbf{$\Omega_2$:}
        We have for $v\in V$ with $\|v\|_V\leq 1$, using Jensen, Assumption \ref{Assumption:Standard Assumptions for Existence and Uniqueness of State Equation} (H4) and \eqref{eq:a priori-Estimate in Existence proof}
        \begin{align*}
            \E\left[\left|\int_s^t \langle LX^\alpha_r+ F\left(\theta^\alpha_r\right), v\rangle_V dr\right|\right]
            \leq & \E\left[\left(t-s\right)^{\frac{1}{2}}\left(\int_0^T \| LX^\alpha_r + F\left(\theta^\alpha_r\right)\|^{2}_{V^*}\| v\|^2_V dr\right)^{{\frac{1}{2}}}\right]\\
            \leq & c_1\left(t-s\right)^{\frac{1}{2}}
        \end{align*}
        Further, using the BDG-inequality, Assumption \ref{Assumption:Standard Assumptions for Existence and Uniqueness of State Equation} (H3b), $q>2$ together with Jensen and \eqref{eq:a priori-Estimate in Existence proof}, we get
        \begin{align*}
            \E\left[\left|\langle \int_s^t B\left(\theta^\alpha_r\right) d W_r , v\rangle_H\right|\right] 
            \leq& c_2\left(t-s\right)^{\frac{q-2}{q}}.
        \end{align*}
        Putting these together, we arrive at
        \begin{align*}
            \E\left[\left|\langle X^\alpha_t-X_s^\alpha , v \rangle_H\right|\right]
            \leq  c_3(t-s)^{\tilde{q}}
        \end{align*}
        for some $\tilde{q}>0$.
        Now, we use the Arzela-Ascoli theorem (cf. Appendix \refforappendixone) to conclude tightness on $\Omega_2$.
        Note, that normally, one would need to show equicontinuity for every $h\in H$, but due to the uniform boundedness from \eqref{eq:a priori-Estimate in Existence proof}, it is sufficient to use the dense subset $V$.\\
        \textbf{$\Omega_3$:} Since we have already shown the equicontinuity, we exploit, using Assumption \ref{assumption:V compactly in H}, the following lemma.
         \begin{lemma}[Lemma 2.22 from \cite{Pardoux2021}]
            Given that the injection from $V$ into $H$ is compact, from any sequence $\left\{u_n, n \geq 1\right\}$ which is both bounded in $L^2([0, T],V) \cap L^\infty([0, T],H)$ and equicontinuous as $V^{\prime}$-valued functions, and such that the sequence $\left\{u_n(0)\right\}$ converges strongly in $H$, one can extract a subsequence which converges in $L^2([0, T],H)$ strongly.
        \end{lemma} \noindent
        Now, note the following:
        \begin{enumerate}[(I)]
            \item By the generalized Skorokhod theorem from \cite{Jakubowski1998} (cf. Appendix \refforappendixone) there exists a probability space $(\tilde{\Omega}, \tilde{\mathcal{F}}, \tilde{\mathbb{P}})$ and a sequence of random variables $(\tilde{X}^{\alpha^n})_{n \in \mathbb{N}}$ defined on $\tilde{\Omega}$ with the same law as $(X^{\alpha^n})_{n \in \mathbb{N}}$ and $\tilde{X}^{\alpha^*}$ defined on $\tilde{\Omega}$, such that the laws converge weakly (in the probabilistic sense) on $L^2([0,T],V)_{\text{weak}}$ and $C([0,T],H_{\text{weak}})$ and $\tilde{\mathbb{P}}$-almost surely we have $\tilde{X}^{\alpha^n} \rightarrow \tilde{X}^{\alpha^*}$ in $L^2([0, T],H)$.
            \item Due to the higher moments bound from \eqref{eq:a priori-Estimate in Existence proof} ($q>2$), we get the uniform integrability of $(\tilde{X}^{\alpha^n})_{n\in\N}$. This together with the a.s. convergence in $L^{2}([0,T],H)$ gives convergence in $L^{q^\prime}(\Omega,L^{q^\prime}([0,T],H))\simeq L^{q^\prime}([0,T],L^{q^\prime}(\Omega,H))\simeq L^{q^\prime}(\Omega\times[0,T],H)$ for all $q^\prime< q$. 
            \item In particular, along a subsequence we have $dt$-a.s. convergence in $L^{q^\prime}(\Omega,H)$ and hence also weak convergence of the laws on $H$. So, we also have $dt$-a.s. $\W_{q^\prime}(\tilde{X}^{\alpha^n}_t,\tilde{X}^{\alpha^*}_t)\to 0$ .
        \end{enumerate}
        It remains to identify the law of $\tilde{X}^{\alpha^*}$ as the law of the solution to \eqref{eq:TheBase-State-SPDE} with $\alpha^*$. To use a martingale representation theorem (cf. \cite{DaPrato2014}, \cite{GOLDYS2009}), we want to show, that, for all $s,t\in[0,T]$, $v\in V$ and $\varphi \in L^\infty(\mathcal{F}_s)$, we can replace $\alpha^n$ by $\alpha^*$ in
        \begin{equation}
            \widetilde{\mathbb{E}}\left[\langle \tilde{X}_t^{\alpha^n}-\tilde{X}_s^{\alpha^n}-\int_s^t L\tilde{X}^{\alpha^n}+ F(\tilde{\theta}^{\alpha^n}_r) d r , v \varphi \rangle_H\right]=0, \label{eq:DriftIdentification}
        \end{equation} 
        i.e. we can identify the drift part of our limiting process, and that, for all $s,t\in[0,T]$ and $a,b\in V$, we can also do the same replacement in
        \begin{equation}
            \begin{aligned}
                0=&\widetilde{\mathbb{E}}\left[ \langle\tilde{X}_t^{\alpha^n}-\tilde{X}_s- \int_s^t L\tilde{X}_r^{\alpha^n}+F(\tilde{\theta}^{\alpha^n}_r) d r, a\rangle_H \langle\tilde{X}_t^{\alpha^n}-\tilde{X}_s-\int_s^t L\tilde{X}_r^{\alpha^n}\right.\\
                & +\left.F(\tilde{\theta}^{\alpha^n}_r) dr, b\rangle_H \varphi-\int_s^t\langle  B(\tilde{\theta}^{\alpha^n}_r)  a, B(\tilde{\theta}^{\alpha^n}_r)  b\rangle_H d r \varphi\right],
            \end{aligned} \label{eq:MartingaleIdentification}
        \end{equation}
        i.e. we can show that the martingale part has the correct quadratic variation.\\
        Towards the convergence of \eqref{eq:DriftIdentification}, we consider the terms separately. The convergence of the $\langle\tilde{X}_t^{\alpha^n},\varphi v\rangle_H$-parts (for all $t\in[0,T]$) follows by the weak $\Omega_2$-convergence of the laws from (I) together uniform integrability from the bounds in \eqref{eq:a priori-Estimate in Existence proof}. The convergence of the $\langle \int LX^{\alpha^n},v\phi\rangle_V$-part follows by the weak $\Omega_1$-convergence of the laws from (I), using the continuity and linearity of $L:V\to V^*$ and uniform integrability from the bounds in \eqref{eq:a priori-Estimate in Existence proof}. Finally, the convergence of the $\langle \int F(\theta^{\alpha^n}),v\phi\rangle_V$-part follows by Assumption \ref{assumption:Condition for existence of controls} (i) and (ii) together with (II) and (III), using Assumption \ref{Assumption:Standard Assumptions for Existence and Uniqueness of State Equation} (H4) and \eqref{eq:a priori-Estimate in Existence proof} to get uniform integrability.\\
        Towards the convergence of \eqref{eq:MartingaleIdentification}, we do the same: By a similar argumentation as above, using that our a priori bounds hold for high enough moments ($q>p+2$) and uniform integrability, the convergence of the minuend follows (as we basically need $L^2$-convergence instead of $L^1$-convergence). And, as $B$ is Lipschitz by Assumption \ref{Assumption:Standard Assumptions for Existence and Uniqueness of State Equation} (H3b), we get similar uniform integrability, and, as $B^*B$ is continuous in the necessary way by Assumption \ref{assumption:Condition for existence of controls} (ii), the necessary convergence and can then use the same argumentation for the subtrahend, too.\\
        Now, we can use a martingale representation theorem (cf. \cite{DaPrato2014} Theorem 8.2) to get our desired optimal solution $\tilde{X}^\alpha$ (maybe on some other extended probability space) and, due to the strong existence and uniqueness of the solution to the state equation, we have $\mathbb{P} \circ(X^{\alpha^*})^{-1}=\tilde{\mathbb{P}}\circ(\tilde{X}^{\alpha^*})^{-1}$. Thus,
        \begin{align*}
            \inf _{\alpha \in \mathbb{A}} J(\alpha) 
            & =\liminf _{n \rightarrow \infty} \tilde{\mathbb{E}}\left[\int_0^T f\left(t, \tilde{X}_t ^{\alpha^n}, \cL\left(\tilde{X}_t ^{\alpha^n}\right), \alpha_t^n\right) d t+g\left(\tilde{X}_T ^{\alpha^n}, \cL\left(\tilde{X}_T ^{\alpha^n}\right)\right)\right] \\ 
            & \geq \tilde{\mathbb{E}} \left[\liminf _{n \rightarrow \infty} \int_0^T f\left(t, \tilde{X}_t ^{\alpha^n}, \cL\left(\tilde{X}_t ^{\alpha^n}\right), \alpha_t^n\right) d t+g\left(\tilde{X}_T ^{\alpha^n}, \cL\left(\tilde{X}_T ^{\alpha^n}\right)\right)\right] \\ 
            & \geq \tilde{\mathbb{E}}\left[\int_0^T f\left(t, \tilde{X}^{\alpha^*}_t, \cL\left(\tilde{X}^{\alpha^*}_t\right), \alpha_t\right) d t+g\left(\tilde{X}^{\alpha^*}_T, \cL\left(\tilde{X}^{\alpha^*}_T\right)\right)\right] \\ 
            & =J(\alpha^*),
        \end{align*}
        where 
        in the first inequality we use Fatou's Lemma, possible due to the lower boundedness of $f$ and $g$ from Assumption \ref{Assumption:f and g are bounded below and quadratically} and in the second inequality, we use the weak sequential lower semi-continuity of $f$ from Assumption \ref{assumption:Condition for existence of controls} (i) and lower semi-continuity of $g$ from Assumption \ref{assumption:Condition for existence of controls} (iii).
    \end{proof}

\section{Examples}\label{section:examples}
\subsection{Linear-Quadratic Case}
    In the setting introduced in section \ref{section:Preliminaries and Assumptions}, we consider the following standard linear-quadratic case,
    \begin{itemize}
        \item $F(t,x,\mu,\alpha)=F_1(t)x+F_2(t) \int_H y \,\mu(dy)+F_3(t)\alpha$,
        \item $B(t,x,\mu,\alpha)=B_1(t)x+B_2(t)\int_H y\,\mu(dy)+B_3(t)\alpha$,
    \end{itemize}
    the cost functions are
    \begin{itemize}
        \item $f(t,x,\mu,\alpha)=\langle x, f_1(t)x\rangle_H+\langle x-h_1(t)\int_H y\,\mu(dy),f_2(t)(x-h_1(t)\int_H y\,\mu(dy)\rangle_H$ $+\langle \alpha, f_3(t)\alpha\rangle_U$,
        \item $g(x,\mu)=\langle x, g_1 x\rangle_H+\langle x-h_2\int_H y\,\mu(dy), g_2\left(x-h_2\int_H y\,\mu(dy)\right)\rangle_H$
    \end{itemize}
    and we choose $q>2$. \\
    We denote $L(E_1,E_2)$ the linear bounded operators from some normed spaces $E_1$ to $E_2$ equipped with the operator norm and $L_{s,p}(E_1)$ the set of symmetric, nonnegative semi-definite, linear, bounded operators on $E_1$. Let $L$ fulfill Assumption \ref{assumption:Linear operator} and let
    \begin{itemize}
        \item $F_1, F_2 :[0,T]\mapsto L(H,H)$, $F_3:[0,T]\mapsto L(U,H)$,
        \item $B_1, B_2 :[0,T]\mapsto L(H,L_2(H))$, $B_3:[0,T]\mapsto L(U,L_2(H))$,
        \item $f_1, f_2 :[0,T]\mapsto L_{s,p}(H)$, $f_3:[0,T]\mapsto L_{s,p}(U)$,
        \item $h_1, h_2 :[0,T]\mapsto L(H,H)$,
        \item $g_1, g_2 \in L_{s,p}(H,H)$,
    \end{itemize}
    be measurable and all their operator norms be uniformly bounded in $t$.\\
    Then, Assumption \ref{Assumption:Standard Assumptions for Existence and Uniqueness of State Equation},
    \ref{Assumption:f and g are bounded below and quadratically},
    \ref{Assumptions:DifferentiabilityOfCoefficients}
    are fulfilled and $p$ from \ref{Assumption:Standard Assumptions for Existence and Uniqueness of State Equation} and $p^\prime$ from \ref{Assumptions:DifferentiabilityOfCoefficients} are $0$ in this case, so $q>2$ is indeed sufficient.
    Assumption \ref{Assumption: f,g diff. and moment bounds} 
    and \ref{Assumption:Convexity of F,B,f}
    are also easy to check and thus, our results until Theorem \ref{theorem:Pontryagin Maximum Principle} are applicable to this case.\\
    For the existence of a deterministic optimal control, we additionally only need to assume that $V$ embeds compactly into $H$. Assumption \ref{assumption:Condition for existence of controls} is fulfilled, as bounded, linear maps are always weakly-weakly continuous and $f$ and $g$ are weakly sequentially lower-semicontinuous due to their continuity and convexity (cf. Remark \ref{remark:Sufficient conditions for existence}).
\subsection{Reaction-Diffusion Equation} \label{example:Reaction-diffusion-equation}
    We will look at a generalization of \cite{StannatWessels2021}.
    Let $\Delta$ be the Dirichlet-Laplace on some bounded Lipschitz domain $D\subset \R^d$, with $d\leq 3$, and $V=H_0^1(D,\R)\subset L^2(D,\R)= H$ and let $U=H$.
    Consider the equation
    \begin{equation*}
        dX_t= \left(\Delta X_t + F_1(X_t)+F_2(\cL(X_t))+b_t\alpha_t \right)dt + B dW_t,
    \end{equation*}
    where $F_1:\R\to\R$ acts as a Nemytskii operator, $F_2:\cP_2(H)\to H$ is Lipschitz
    and $\Lambda$-continuously $L$-differentiable, $b_t\in L^\infty ([0,T]\times D)$, $B\in L_2(H)$, $W_t$ a cylindrical Wiener process on $H$. \\
    We make the following assumptions depending on the dimension of the domain: If $d=1$, then $r=3$ and $q>6$, if $d=2$, then $r<3$ and $q>6$ and if $d=3$, then $r=\frac{7}{3}$ and $q>16/3$.\\
    Now, we assume, that for all $x\in\R$ it holds
    \begin{enumerate}[(i)]
        \item $F_1(x)x \leq C$,
        \item $|F_1(x)|\leq C(1+|x|^r)$,
        \item $F_1:\R\to\R$ is continuously differentiable,
        \item $\sup _{y \in \mathbb{R}} F_1^{\prime}(y)<\infty$ and
        \item $\left|F_1^{\prime}(x)\right|<C\left(1+|x|^{r-1}\right)$.
    \end{enumerate}
    Note, that the upper bound on the derivative implies monotonicity,
    \begin{equation*}
        (F_1(x)-F_1(y))(x-y) \leq C|x-y|^2\quad\text{for all }x,y\in \R
    \end{equation*}
    and
    \begin{equation*}
        (F^\prime_1(x)z_1-F^\prime_1(x)z_2)(z_1-z_2) \leq C|z_1-z_2|^2\quad\text{for all }x,z_1,z_3\in \R.
    \end{equation*}
    We want to optimize with respect to the quadratic cost functional
    \begin{align*}
        J(\alpha)
        :=&\mathbb{E}\left[\int_0^T \|X^\alpha_t-\bar{u}_t\|_H^2+\|\alpha_t-\bar{\alpha}_t\|_H^2 d t+ \|X^\alpha_T - \bar{u}_T\|_H^2\right],
    \end{align*}
    where $\bar{u},\bar{\alpha}\in L^2([0,T],H)$ and $\bar{u}_T\in H$ are running and terminal reference profiles.\\
    Now, Assumptions \ref{assumption:Linear operator}, \ref{Assumption:Standard Assumptions for Existence and Uniqueness of State Equation}, \ref{Assumption:f and g are bounded below and quadratically}, \ref{Assumptions:DifferentiabilityOfCoefficients}, \ref{Assumption: f,g diff. and moment bounds} and \ref{Assumption:Convexity of F,B,f} hold true, so all the results until the Pontryagin maximum principle from Theorem \ref{theorem:Pontryagin Maximum Principle} holds.\\
    For the existence of optimal controls, we let the controls be deterministic and restrict to $d\leq 2$. We see, that Assumption \ref{assumption:V compactly in H} holds. Since Assumption \ref{assumption:Condition for existence of controls} is so specific, we look at the explicit example $F_1(x)=-x^3$.
    We notice, that $F_2$ is Lipschitz continuous and $\alpha_t\to b_t\alpha_t$ is linear and bounded and therefore weakly-weakly continuous (cf. Remark \ref{remark:Sufficient conditions for existence}). Towards the bound of $F_1$, we notice, that 
    \begin{equation*}
        x^3- y^3=(x- y)\left(x^2+y^2+xy\right).
    \end{equation*}
    Thus, using the Hölder-inequality and interpolation inequalities (cf. \cite{adams2003sobolev} 5.8), we get
    \begin{align*}
        &\int_0^T \langle u(t)^3- u_n(t)^3,v\rangle_V dt
        \leq  \|v\|_{L^{r^\prime}} \int_0^T \left\|u(t)^3- u_n(t)^3\right\|_{L^{\frac{r^\prime}{r^\prime-1}}}dt\\
        \leq & c_1\|v\|_{V} \int_0^T   \left(\|u(t)\|^2_{L^{r^\prime}}+\|u_n(t)\|^2_{L^{r^\prime}}\right) 
        \left\|(u(t)- u_n(t))\right\|_{L^{\frac{r^\prime}{r^\prime-3}}}dt\\
        \leq & c_2\|v\|_V \left(\int_0^T \|u(t)\|^{\frac{2\theta}{\gamma}}_V\|u(t)\|^{\frac{2(1-\theta)}{\gamma}}_H
        +\|u_n(t)\|^{\frac{2\theta}{\gamma}}_V\|u_n(t)\|^{\frac{2(1-\theta)}{\gamma}}_H dt\right)^{\gamma}\\
        &\cdot \left(\int_0^T \left\|(u(t)- u_n(t))\right\|_V^{\frac{\theta^\prime}{1-\gamma}}\left\|(u(t)- u_n(t))\right\|_H^{\frac{1-\theta^\prime}{1-\gamma}} dt\right)^{1-\gamma}
    \end{align*}
    for some $0<\gamma<1$ and $\theta = \frac{d}{2}-\frac{d}{r^\prime}$ and $\theta^\prime = \frac{d}{2}-\frac{d(r^\prime-3)}{r^\prime}.$
    Thus, choosing $r^\prime=4$, we get
    \begin{equation*}
        \theta
        =\theta^\prime
        =\begin{cases}
            \frac{1}{4} & d=1\\
            \frac{1}{2} & d=2
        \end{cases}    
    \end{equation*}
    and, choosing $\gamma=\frac{2}{3}$ and using the Hölder-inequality once more, we get the desired bound. 
    Towards the lower semi-continuity of $f$, we notice, that $f$ is continuous and convex. This immediately gives the weak sequential lower semi-continuity. As $B$ is constant, it also fulfills the continuity from Assumption \ref{assumption:Condition for existence of controls} (ii) and $g$ is also convex and continuous, so it also fulfills the desired continuity from (iii).

\section*{Acknowledgments}
WS acknowledges support from DFG CRC/TRR 388 'Rough Analysis, Stochastic Dynamics and Related Fields', Projects A10 and B09.
\appendix
\section{Topologies and Compactness on General Function Spaces}\label{appendix:topologies on general function spaces}
    For our purposes, we first need to define some topologies on spaces of functions. For example, as a Hilbert space $H$ with its weak topology $\cH_\sigma$ is not a metric space, we cannot use the standard supremum topology on the space of continuous functions $C([0,T],(H,\cH_\sigma))$. But, using the theory of uniform spaces, one can define a similar topology, the so-called topology of uniform convergence. 
    We do not introduce uniform spaces completely here, but refer to e.g. \cite{willard2004general} Chapter $9$ for an introduction. Relevant for our result will be the following characterization.
    \begin{definition}[Gauge space \cite{SCHECHTER1997} 5.15 h]
        \begin{enumerate}[(i)]
            \item A collection of pseudometrics $D$ on a set $X$ is called a gauge. 
            \item For each $d \in D$ let $B_d$ be the corresponding open ball. Let $\mathcal{T}_D$ be the collection of all sets $T \subseteq X$ having the property that
            \begin{equation*}
            \begin{aligned}
            & \text { for each } x \in T \text {, there is some finite set } D_0 \subseteq D \text { and some number } r>0 \text { such } \\
            & \text { that } \bigcap_{d \in D_0} B_d(x, r) \subseteq T \text {. }
            \end{aligned}
            \end{equation*}
            Then $\mathcal{T}_D$ is a topology on $X$. We will call it the gauge topology determined by $D$. Any gauge space $(X, D)$ will be understood to be equipped with this topology.
        \end{enumerate}
    \end{definition}
    We get the following result.
    \begin{theorem} \label{theorem:equivalence uniform, gauge and completely regular}
        Let $(X, \mathcal{T})$ be a topological space. Then the following conditions are equivalent:
         \begin{enumerate} [(A)]
             \item $\mathcal{T}$ is gaugeable - i.e., given by a gauge, in the sense of the above definition.
             \item $\mathcal{T}$ is uniformizable - i.e., given by a uniformity.
             \item $\mathcal{T}$ is completely regular.
         \end{enumerate}
    \end{theorem}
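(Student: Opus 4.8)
The plan is to establish the two links (A)$\Leftrightarrow$(B) and (B)$\Leftrightarrow$(C), with essentially all of the technical weight resting on the pseudometrization (metrization) lemma for uniformities. I record at the outset the convention that, since complete regularity here includes the Hausdorff axiom, ``uniformizable'' must be read as admitting a \emph{separated} uniformity $\mathcal{U}$ (i.e.\ $\bigcap\mathcal{U}=\Delta$, the diagonal), this being exactly what makes the induced topology Hausdorff.

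For (A)$\Leftrightarrow$(B) I would show that gauges and uniformities generate one another and yield the identical topology. Given a gauge $D$, the sets $\{(x,y):d(x,y)<\varepsilon\}$ with $d\in D$, $\varepsilon>0$, form a subbase of entourages for a uniformity $\mathcal{U}_D$; verifying the uniformity axioms is routine, and the neighbourhoods $U[x]$ for $U\in\mathcal{U}_D$ are precisely the finite intersections of gauge balls $B_d(x,r)$, so $\mathcal{T}_{\mathcal{U}_D}=\mathcal{T}_D$. Conversely, given a uniformity $\mathcal{U}$, I take $D$ to be the family of all uniformly continuous pseudometrics. Here the key step is the metrization lemma (Kelley/Weil): for any decreasing sequence of symmetric entourages $V_0\supseteq V_1\supseteq\cdots$ with $V_{n+1}\circ V_{n+1}\circ V_{n+1}\subseteq V_n$, there is a pseudometric $d$ satisfying $\{d<2^{-n}\}\subseteq V_n\subseteq\{d\le 2^{-n}\}$. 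Since every entourage admits such a chain beginning below it, each member of $\mathcal{U}$ contains a pseudometric ball, so $D$ regenerates $\mathcal{U}$ and hence the same topology.

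For (B)$\Rightarrow$(C), let $x\in X$ and let $Z$ be closed with $x\notin Z$. Openness of $X\setminus Z$ yields an entourage $U$ with $U[x]\cap Z=\varnothing$. Building a chain of symmetric entourages below $U$ and invoking the metrization lemma produces a uniformly continuous pseudometric $d$ with $d(x,z)\ge c>0$ for all $z\in Z$. The function $f(y):=\max\bigl(0,\,1-d(x,y)/c\bigr)$ is then $\mathcal{T}$-continuous, with $f(x)=1$ and $f\equiv 0$ on $Z$, which is precisely complete regularity; Hausdorffness is supplied by separatedness of $\mathcal{U}$.

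For (C)$\Rightarrow$(B), I would equip $X$ with the gauge $D=\{d_f:f\in C(X,[0,1])\}$, where $d_f(x,y)=|f(x)-f(y)|$, and let $\mathcal{U}$ be the associated gauge uniformity. Since every $f$ is $\mathcal{T}$-continuous we have $\mathcal{T}_D\subseteq\mathcal{T}$. For the reverse inclusion I use complete regularity directly: given open $O\ni x$, a choice of $f$ with $f(x)=1$ and $f\equiv 0$ on $X\setminus O$ gives $\{y:d_f(x,y)<1\}=\{f>0\}\subseteq O$, so each $\mathcal{T}$-neighbourhood contains a gauge ball and $\mathcal{T}\subseteq\mathcal{T}_D$. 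Separatedness of $\mathcal{U}$ follows because in a completely regular Hausdorff space distinct points are separated by some $f$, giving $d_f(x,y)>0$. The main obstacle throughout is the metrization lemma, whose telescoping/chaining construction of the pseudometric is the only genuinely nontrivial ingredient; the remaining work is verification of the uniformity axioms and unwinding of definitions.
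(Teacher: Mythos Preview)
The paper does not supply its own proof of this theorem; it is stated in the appendix as a known result (the surrounding material is attributed to Schechter's \emph{Handbook of Analysis and its Foundations}). Your argument is a correct and standard proof of the equivalence, proceeding via (A)$\Leftrightarrow$(B) using the Weil--Kelley pseudometrization lemma and (B)$\Leftrightarrow$(C) via uniformly continuous pseudometrics in one direction and the gauge of $[0,1]$-valued continuous functions in the other. You also correctly flag the Hausdorff/separatedness bookkeeping needed to reconcile the paper's convention (Definition~\ref{definition:completely regular, Tychonoff} builds Hausdorff into ``completely regular'') with the unqualified statements of gaugeability and uniformizability.
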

    Other authors also call a collection of pseudometrics that is equivalent to a uniformity a gage, if it contains all the uniformly continuous (w.r.t. the product uniformity on $X\times X$) pseudometrics. (cf. \cite{kelley1975} in Chapter 'Uniform spaces' after Corollary 17).\\
    \begin{definition}\label{definition:initial topology}
        Let $X$ be a space and $(Y_\lambda,\cY_\lambda)_{\lambda\in\Lambda}$ a family of topological spaces. For a family of functions $\cF=\{\varphi_\lambda:X\to(Y_\lambda,\cY_\lambda)\mid\lambda\in\Lambda\}$, we denote $\sigma\left(\cF\right)$ the smallest topology on $X$ such that every function in $\cF$ is continuous. This is called the initial topology induced by $\cF$.
    \end{definition} 
    Conveniently, we have the following result.
    \begin{theorem}[\cite{SCHECHTER1997} 18.9 g] \label{theorem:initial topologies are uniform topologies}
        \begin{enumerate}[(i)]
            \item The initial topology determined directly by mappings in $\cF$ is the same, as the uniform topology determined by the initial uniformity, that is generated by $\cF$.
            \item In particular, every initial topology is a uniform topology.
        \end{enumerate}
    \end{theorem}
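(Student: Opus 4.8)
The plan is to realise the initial uniformity concretely through pullback pseudometrics and then to compare the resulting gauge topology with the initial topology $\sigma(\cF)$ one subbasic set at a time. Throughout I regard each target $(Y_\lambda,\cY_\lambda)$ as a gauge space with gauge $D_\lambda$ (legitimate since, by Theorem \ref{theorem:equivalence uniform, gauge and completely regular}, a topology is uniformizable exactly when it is gaugeable), so that $\cY_\lambda=\mathcal{T}_{D_\lambda}$. For $\lambda\in\Lambda$ and $d\in D_\lambda$ I define the pullback pseudometric $\rho_{\lambda,d}(x,x')=d(\varphi_\lambda(x),\varphi_\lambda(x'))$ on $X$ and set $D=\{\rho_{\lambda,d}\mid \lambda\in\Lambda,\ d\in D_\lambda\}$. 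As a preliminary step I would record the standard fact that $D$ generates precisely the initial uniformity: a map $\varphi_\lambda$ is uniformly continuous exactly when each $d\circ(\varphi_\lambda\times\varphi_\lambda)$ belongs to the gauge of $X$, so the coarsest gauge making every $\varphi_\lambda$ uniformly continuous is the one generated by $D$. It therefore suffices to prove $\mathcal{T}_D=\sigma(\cF)$.

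The crux is the elementary identity
\begin{equation*}
B_{\rho_{\lambda,d}}(x_0,r)=\{x\in X\mid d(\varphi_\lambda(x),\varphi_\lambda(x_0))<r\}=\varphi_\lambda^{-1}\bigl(B_d(\varphi_\lambda(x_0),r)\bigr),
\end{equation*}
valid for all $\lambda$, all $d\in D_\lambda$, all $x_0\in X$ and all $r>0$. For the inclusion $\sigma(\cF)\subseteq\mathcal{T}_D$ I would show that each $\varphi_\lambda$ is $\mathcal{T}_D$-continuous: the balls $B_d(y,r)$ form a subbase of $\cY_\lambda=\mathcal{T}_{D_\lambda}$, and by the identity their $\varphi_\lambda$-preimages are exactly the gauge balls $B_{\rho_{\lambda,d}}(x_0,r)$, which are $\mathcal{T}_D$-open; since $\sigma(\cF)$ is by definition the coarsest topology rendering every $\varphi_\lambda$ continuous, it is contained in $\mathcal{T}_D$. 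For the reverse inclusion I would use that the single-pseudometric balls $B_{\rho_{\lambda,d}}(x_0,r)$ form a subbase of $\mathcal{T}_D$; by the same identity each such ball is the $\varphi_\lambda$-preimage of the $\cY_\lambda$-open set $B_d(\varphi_\lambda(x_0),r)$, hence is $\sigma(\cF)$-open. Since finite intersections and arbitrary unions preserve $\sigma(\cF)$-openness, $\mathcal{T}_D\subseteq\sigma(\cF)$. The two inclusions give $\mathcal{T}_D=\sigma(\cF)$, which is assertion (i).

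Assertion (ii) is then immediate: the topology $\sigma(\cF)$ has been exhibited as the gauge topology $\mathcal{T}_D$ of the explicit gauge $D$ on $X$, so it is a uniform topology by Theorem \ref{theorem:equivalence uniform, gauge and completely regular}. I expect the only genuine obstacle to lie in the preliminary identification of $D$ as a generating gauge of the initial uniformity, that is, in the passage between the entourage description of ``the coarsest uniformity making every $\varphi_\lambda$ uniformly continuous'' and its pseudometric description, together with the bookkeeping that each ball $B_d(\cdot,r)$ is genuinely $\cY_\lambda$-open. Both are routine facts about gauge spaces, but they are where care is needed; once the pullback identity is in place, the comparison of the two topologies is purely formal.
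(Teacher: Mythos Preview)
The paper does not prove this statement; it is quoted verbatim as a result from \cite{SCHECHTER1997} (item 18.9~g) and used as a black box, so there is no ``paper's own proof'' to compare against. Your sketch is a correct and standard way to establish the cited fact.

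One remark on the internal logic: the ``preliminary step'' you flag as the only genuine obstacle---that the pullback gauge $D=\{\rho_{\lambda,d}\}$ generates the initial uniformity---is exactly the content of the very next result quoted in the paper, Theorem~\ref{theorem:initial gauge} (\cite{SCHECHTER1997} 18.9~f). So in the paper's logical order that identification is also taken for granted, and your argument reduces to the clean subbase comparison via the pullback identity $B_{\rho_{\lambda,d}}(x_0,r)=\varphi_\lambda^{-1}\bigl(B_d(\varphi_\lambda(x_0),r)\bigr)$, which is entirely correct. A small wording issue: your justification ``legitimate since a topology is uniformizable exactly when it is gaugeable'' does not by itself show the targets $(Y_\lambda,\cY_\lambda)$ \emph{are} gaugeable; rather, the hypothesis that an initial \emph{uniformity} exists already forces the targets to be uniform spaces, and that is what lets you pick gauges $D_\lambda$.
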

    In fact, the uniform topologies are exactly those, that are initial topologies (cf. \cite{Aliprantis2006} Corollary 2.56).
    Further, we also get the following.
    \begin{theorem}[\cite{SCHECHTER1997} 18.9 f]\label{theorem:initial gauge}
        Let $X$ be a set, let $\left\{\left(Y_\lambda, E_\lambda\right): \lambda \in \Lambda\right\}$ be a collection of gauge spaces, and let $\varphi_\lambda$ : $X \rightarrow Y_\lambda$ be some mappings. Then, the initial uniformity on $X$ determined by the $\varphi_\lambda$ and $E_\lambda$ is equal to the uniformity on $X$ determined by the gauge $D=\bigcup_{\lambda \in \Lambda}\left\{e \varphi_\lambda: e \in E_\lambda\right\}$, where $\left(e \varphi_\lambda\right)\left(x, x^{\prime}\right)=e\left(\varphi_\lambda(x), \varphi_\lambda\left(x^{\prime}\right)\right)$.
    \end{theorem}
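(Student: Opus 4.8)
The plan is to prove that the two uniformities coincide by exhibiting a common subbasis of entourages; everything reduces to a single preimage identity once the bookkeeping for subbases is in place.

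First I would check that each $e\varphi_\lambda$ is genuinely a pseudometric on $X$, so that $D$ is a legitimate gauge in the sense of the gauge-space definition above: one has $(e\varphi_\lambda)(x,x)=e(\varphi_\lambda(x),\varphi_\lambda(x))=0$, symmetry is inherited directly from $e$, and the triangle inequality follows by applying the triangle inequality of $e$ to the three points $\varphi_\lambda(x),\varphi_\lambda(x'),\varphi_\lambda(x'')$. Consequently the uniformity $\mathcal{U}_D$ determined by $D$ is well defined, and by the standard construction of the uniformity associated to a gauge it admits as a subbasis of entourages the sets
\begin{equation*}
S_{\lambda,e,r} := \{(x,x') \in X\times X : (e\varphi_\lambda)(x,x') < r\}, \qquad \lambda\in\Lambda,\ e\in E_\lambda,\ r>0,
\end{equation*}
with finite intersections forming a base.

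Next I would recall the analogous description of the initial uniformity $\mathcal{U}_{\mathrm{init}}$ determined by the $\varphi_\lambda$ and the gauge uniformities $\mathcal{U}_{E_\lambda}$ on the $Y_\lambda$. By definition $\mathcal{U}_{\mathrm{init}}$ is the coarsest uniformity on $X$ making every $\varphi_\lambda$ uniformly continuous, and it is generated by the subbasis of preimages $(\varphi_\lambda\times\varphi_\lambda)^{-1}(V)$ as $\lambda$ ranges over $\Lambda$ and $V$ over the entourages of $\mathcal{U}_{E_\lambda}$. The point I would then exploit is that this generating family may be shrunk: since $(\varphi_\lambda\times\varphi_\lambda)^{-1}$ commutes with finite intersections and with passage to supersets, it suffices to let $V$ range only over a subbasis of each $\mathcal{U}_{E_\lambda}$. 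Choosing for that subbasis the gauge entourages $U^\lambda_{e,r} := \{(y,y') : e(y,y') < r\}$, with $e\in E_\lambda$ and $r>0$, I obtain that $\mathcal{U}_{\mathrm{init}}$ is already generated by the subbasis $\{(\varphi_\lambda\times\varphi_\lambda)^{-1}(U^\lambda_{e,r})\}_{\lambda,e,r}$.

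The two descriptions then meet at one identity: for each $\lambda$, $e\in E_\lambda$ and $r>0$,
\begin{align*}
(\varphi_\lambda\times\varphi_\lambda)^{-1}(U^\lambda_{e,r})
&= \{(x,x') : (\varphi_\lambda(x),\varphi_\lambda(x')) \in U^\lambda_{e,r}\}\\
&= \{(x,x') : e(\varphi_\lambda(x),\varphi_\lambda(x')) < r\}
= S_{\lambda,e,r}.
\end{align*}
Thus the natural subbasis for $\mathcal{U}_{\mathrm{init}}$ is literally the natural subbasis for $\mathcal{U}_D$, and since a uniformity is determined by any of its subbases we conclude $\mathcal{U}_{\mathrm{init}} = \mathcal{U}_D$. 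I expect the only genuinely delicate step to be the subbasis-reduction for the initial uniformity: one must verify that generating $\mathcal{U}_{\mathrm{init}}$ from the full uniformities $\mathcal{U}_{E_\lambda}$ produces the same filter as generating it from the chosen subbases, and this is precisely where the compatibility of the preimage operation with finite intersections is needed. The pseudometric check for $D$ and the displayed preimage identity are both routine by comparison.
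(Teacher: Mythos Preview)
The paper does not supply its own proof of this theorem; it is quoted verbatim from \cite{SCHECHTER1997} (18.9~f) and used as a black box in the appendix. Your argument is correct and is exactly the standard one: verify that the pulled-back pseudometrics form a gauge, identify the canonical subbases of entourages on both sides, and observe that they coincide via the displayed preimage identity. The one step you flag as delicate---reducing the generating family of the initial uniformity from full entourages to a subbasis on each $Y_\lambda$---is justified precisely as you indicate, since $(\varphi_\lambda\times\varphi_\lambda)^{-1}$ preserves finite intersections and supersets, so the filter generated does not change.
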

    We now introduce topologies on function spaces.
    \begin{definition}[Uniform convergence on members of $\cS$ (\cite{SCHECHTER1997} 18.26)] \label{definition: topology of uniform convergence on S}
        Let $X$ be a set, and let $\mathcal{S}$ be a collection of subsets of $X$. Let $(Y, \mathcal{U})$ be a uniform space. A net $\left(\varphi_\alpha\right)$ in $Y^X=\{$functions from $X$ into $Y\}$ will be said to converge $\mathcal{U}$-uniformly on elements of $\mathcal{S}$ to a limit $\varphi \in Y^X$ if
        for each $S \in \mathcal{S}$ and $U \in \mathcal{U}$, eventually $\left\{\left(\varphi_\alpha(s), \varphi(s)\right): s \in S\right\} \subseteq U$.\\
        This can be expressed in terms of gauges as well. Let $E$ be any gauge on $Y$ that yields the uniformity $\mathcal{U}$; then $\varphi_\alpha \rightarrow \varphi$ in the sense above if and only if
        \begin{equation*}
            \sup _{s \in S} e\left(\varphi_\alpha(s), \varphi(s)\right) \quad \rightarrow \quad 0 \quad \text { for each } e \in E \text { and } S \in \mathcal{S}
        \end{equation*}
    \end{definition}
    \begin{theorem}[\cite{SCHECHTER1997} p.492] \label{theorem:topology of uniform convergence is gaugeable and a uniform topology}
        The above defined convergence of uniform convergence on members of $\cS$ is topological and gaugeable by the gauge
        \begin{equation*}
            \left\{\sup _{s \in S} e^{\prime}(\cdot(s), \cdot(s))\middle|S\in\cS,e^\prime\in E^\prime\right\},
        \end{equation*}
        where $E^\prime$ is a bounded gauge, that is uniformly equivalent to $E$ (this always exists (cf. \cite{SCHECHTER1997} 18.14)).\\
        In particular this topology is a uniform topology.
    \end{theorem}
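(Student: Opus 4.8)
The plan is to exhibit the claimed family $D=\{d_{S,e'} : S\in\cS,\ e'\in E'\}$, where $d_{S,e'}(\varphi,\psi):=\sup_{s\in S} e'(\varphi(s),\psi(s))$, as a genuine gauge on $Y^X$, and then to check that the gauge topology $\mathcal{T}_D$ it determines has, as its convergent nets, exactly those described in Definition \ref{definition: topology of uniform convergence on S}. Once this is done, $\mathcal{T}_D$ is gaugeable by $D$ by construction, hence uniformizable and in particular a uniform topology by Theorem \ref{theorem:equivalence uniform, gauge and completely regular}, so the final ``in particular'' assertion comes for free. The first task is to verify that each $d_{S,e'}$ is a pseudometric on $Y^X$: symmetry and the triangle inequality are inherited pointwise from $e'$ and are preserved under taking the supremum over $s\in S$, while $d_{S,e'}(\varphi,\varphi)=0$ is immediate. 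The one delicate point is \emph{finiteness}. For an arbitrary $e\in E$ the quantity $\sup_{s\in S} e(\varphi(s),\psi(s))$ may equal $+\infty$, so it fails to be a pseudometric; this is precisely why one replaces $E$ by a bounded uniformly equivalent gauge $E'$ (which exists by \cite{SCHECHTER1997} 18.14), as boundedness of each $e'\in E'$ forces $d_{S,e'}$ to be finite-valued.

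Next I would unwind the gauge topology. By the definition of the gauge topology, basic neighbourhoods of $\varphi$ are finite intersections of balls $B_{d_{S,e'}}(\varphi,r)$, so a net $(\varphi_\alpha)$ converges to $\varphi$ in $\mathcal{T}_D$ if and only if $d_{S,e'}(\varphi_\alpha,\varphi)\to 0$ for every generating pseudometric, that is, if and only if $\sup_{s\in S} e'(\varphi_\alpha(s),\varphi(s))\to 0$ for all $S\in\cS$ and all $e'\in E'$. It then remains to identify this with the convergence of Definition \ref{definition: topology of uniform convergence on S}, which is phrased using the original gauge $E$ rather than $E'$.

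The heart of the argument is that passing from $E$ to the uniformly equivalent $E'$ does not change convergence on members of $\cS$. Since $E$ and $E'$ induce the same uniformity $\mathcal{U}$, for each $e\in E$ and each $\varepsilon>0$ there are finitely many $e'_1,\dots,e'_k\in E'$ and a $\delta>0$ with $\max_i e'_i(y,y')<\delta \Rightarrow e(y,y')<\varepsilon$, and symmetrically. Applying this pointwise at each $s\in S$: if eventually $\sup_{s\in S} e'_i(\varphi_\alpha(s),\varphi(s))<\delta$ for every $i$, then $e(\varphi_\alpha(s),\varphi(s))<\varepsilon$ for every $s\in S$, whence $\sup_{s\in S} e(\varphi_\alpha(s),\varphi(s))\le\varepsilon$ eventually. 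As $\varepsilon>0$ is arbitrary this yields $\sup_{s\in S} e(\varphi_\alpha(s),\varphi(s))\to 0$, and the reverse implication is identical with the roles of $E$ and $E'$ exchanged. Thus the two convergences coincide, so the convergence of Definition \ref{definition: topology of uniform convergence on S} is exactly $\mathcal{T}_D$-convergence; in particular it is topological and gaugeable by $D$.

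I expect the main obstacle to be exactly this finiteness issue and its resolution: one must argue carefully that replacing $E$ by a bounded uniformly equivalent $E'$ is harmless, i.e.\ that taking the supremum over a fixed $S$ transports uniform equivalence of gauges on $Y$ to equivalence of the induced families of pseudometrics on $Y^X$. Everything else --- the pseudometric axioms and the translation between gauge-topological convergence and the stated $\varepsilon$-condition --- is routine bookkeeping.
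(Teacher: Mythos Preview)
The paper does not supply its own proof of this theorem: the statement is quoted from \cite{SCHECHTER1997} p.~492 as a reference result, with no argument given in the present text. So there is no ``paper's proof'' to compare against here.

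That said, your proposal is a correct and self-contained proof of the statement. You have identified precisely the two nontrivial points: (a) that one must pass to a bounded gauge $E'$ so that the suprema $d_{S,e'}$ are finite and hence genuine pseudometrics, and (b) that this replacement is harmless because uniform equivalence of $E$ and $E'$ on $Y$ lifts to equivalence of the induced sup-pseudometrics on $Y^X$, which you argue via the standard $\varepsilon$--$\delta$ characterisation of uniform equivalence applied pointwise over $S$. Your argument for (b) is clean; the only minor remark is that when you write ``there are finitely many $e'_1,\dots,e'_k\in E'$ and a $\delta>0$ with $\max_i e'_i(y,y')<\delta \Rightarrow e(y,y')<\varepsilon$'', you are implicitly using that the entourages of the form $\{(y,y'):e'(y,y')<r\}$ for $e'\in E'$, $r>0$, generate the uniformity as a filter base, which is indeed part of what it means for $E'$ to be a gauge determining $\mathcal U$. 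Everything else is, as you say, routine.
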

    \begin{definition}[\cite{SCHECHTER1997} p.492]
        In the setting of the above Definition \ref{definition: topology of uniform convergence on S} we have the following special cases.
        \begin{enumerate}[(i)]
            \item If $\mathcal{S}$ contains just the singletons $\{x\}$ (for $x \in X$ ), then uniform convergence on elements of $\mathcal{S}$ is the same thing as pointwise convergence. Thus, the product topology on $Y^X$ is a special case of uniform convergence topologies.
            \item When $\mathcal{S}=\{X\}$, then the uniform convergence topology is called simply the topology of uniform convergence on $X$.
            \item When $X$ is a topological space, another important choice is $\mathcal{S}=\{$compact subsets of $X\}$, resulting in the topology of uniform convergence on compact sets.
        \end{enumerate}
    \end{definition}
    \begin{remark}\label{remark:X compact implies equality of uniform topologies}
        Clearly, if $X\in \cS$, then $\cS$ can be reduced to $\{X\}$ and it results in the topology of uniform convergence. So if $X$ is compact, then the topology of uniform convergence and the topology of uniform convergence on compact sets coincide.
    \end{remark}
    Finally, we want to characterize the compact sets by a generalized Arzelà-Ascoli theorem. For this, we first need to introduce equicontinuity.
    \begin{definition}[Equicontinuity (cf. \cite{SCHECHTER1997} 18.29)] \label{definition:Equicontinuity on Uniform spaces}
        Let $X$ be a topological space, and let $x_0 \in X$. Let $(Y, \mathcal{V})$ be a uniform space, with uniformity determined by a gauge $E$. Let $\Phi$ be a collection of functions from $X$ into $Y$. We shall say that $\Phi$ is equicontinuous at the point $x_0$, if for each $e \in E$ and $\varepsilon>0$, there is some neighborhood $H$ of $x_0$ in $X$ such that
        \begin{equation*}
            x \in H, \quad \varphi \in \Phi \quad \Rightarrow \quad e\left(\varphi(x), \varphi\left(x_0\right)\right) \leq \varepsilon .
        \end{equation*}
        A collection of mappings $\Phi: X \rightarrow Y$ is said to be equicontinuous if it is equicontinuous at every point of $X$.
    \end{definition}
    \begin{theorem}[Arzelá-Ascoli (\cite{SCHECHTER1997} 18.35)]\label{theorem:Arzela-Ascoli for general uniform spaces}
        Let $X$ be a topological space, let $Y$ be a uniform space, and let $C(X, Y)=\{$continuous functions from $X$ into $Y\}$ be given the topology of uniform convergence on compact subsets of $X$. Let $\Phi \subseteq C(X, Y)$. If
        \begin{enumerate}
            \item[(1a)] $\Phi$ is equicontinuous, and
            \item[(1b)] the set $\Phi(x)=\{\varphi(x): \varphi \in \Phi\}$ is relatively compact in $Y$ for each $x \in X$
        \end{enumerate}
        then
        \begin{enumerate}
            \item[(2)] $\Phi$ is relatively compact in $C(X, Y)$.
        \end{enumerate}
        If $X$ is locally compact or first countable; then (2) also implies (1).
    \end{theorem}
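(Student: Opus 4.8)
The plan is to realise $C(X,Y)$ inside the product $Y^X=\prod_{x\in X}Y$ equipped with the product (i.e.\ pointwise-convergence) topology $\tau_p$, to obtain compactness there cheaply via Tychonoff's theorem, and then to transfer that compactness to the topology $\tau_c$ of uniform convergence on compact subsets by exploiting equicontinuity. Throughout I fix a gauge $E$ generating the uniformity of $Y$; by Theorem \ref{theorem:topology of uniform convergence is gaugeable and a uniform topology} I may take $E$ bounded, so that for every compact $K\subseteq X$ and $e\in E$ the pseudometric $\rho_{K,e}(\varphi,\psi)=\sup_{x\in K}e(\varphi(x),\psi(x))$ is finite and the family $\{\rho_{K,e}\}$ gauges $\tau_c$. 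Note that $\tau_p$ is coarser than $\tau_c$, since pointwise convergence is uniform convergence on the compact singletons; I use this comparison repeatedly.

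For the forward implication $(1a)+(1b)\Rightarrow(2)$ I would isolate three steps. First, let $\Psi$ denote the $\tau_p$-closure of $\Phi$ in $Y^X$. Since for fixed $x_0,x$ the map $f\mapsto e(f(x),f(x_0))$ is $\tau_p$-continuous and $\{t:t\le\varepsilon\}$ is closed, the equicontinuity inequalities defining (1a) pass to $\tau_p$-limits; hence $\Psi$ is again equicontinuous, and in particular $\Psi\subseteq C(X,Y)$. Second, I claim $\tau_p$ and $\tau_c$ coincide on $\Psi$: given $\psi_0\in\Psi$, a compact $K$, $e\in E$ and $\varepsilon>0$, equicontinuity yields for each $x\in K$ a neighbourhood on which every member of $\Psi$ varies by at most $\varepsilon/4$ in $e$; covering $K$ by finitely many such neighbourhoods centred at $x_1,\dots,x_n$ and intersecting the pointwise conditions $e(\psi(x_i),\psi_0(x_i))<\varepsilon/4$ produces a $\tau_p$-neighbourhood of $\psi_0$ contained in $\{\rho_{K,e}(\cdot,\psi_0)<\varepsilon\}$ by a three-term triangle estimate. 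Third, by (1b) each $\overline{\Phi(x)}$ is compact, so $\prod_{x}\overline{\Phi(x)}$ is $\tau_p$-compact by Tychonoff, and $\Psi$ is a $\tau_p$-closed subset of it, hence $\tau_p$-compact; by the second step it is then $\tau_c$-compact. Since $\tau_p\subseteq\tau_c$ gives $\overline{\Phi}^{\tau_c}\subseteq\overline{\Phi}^{\tau_p}=\Psi$, the set $\overline{\Phi}^{\tau_c}$ is a $\tau_c$-closed subset of the $\tau_c$-compact set $\Psi$ and is therefore compact, which is exactly (2).

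For the converse, assume (2). Property (1b) is automatic: each evaluation $\mathrm{ev}_x\colon C(X,Y)\to Y$ is $\tau_c$-continuous, so $\overline{\Phi(x)}\subseteq \mathrm{ev}_x(\overline{\Phi}^{\tau_c})$ lies inside a compact set and is compact. For (1a) I would use that a $\tau_c$-relatively compact set is totally bounded for the uniformity gauged by the $\rho_{K,e}$. Fix $x_0$, $e\in E$, $\varepsilon>0$. If $X$ is locally compact, choose a compact neighbourhood $K$ of $x_0$; total boundedness gives finitely many $\psi_1,\dots,\psi_m$ with $\Phi\subseteq\bigcup_j\{\varphi:\rho_{K,e}(\varphi,\psi_j)<\varepsilon/3\}$, and continuity of the finitely many $\psi_j$ at $x_0$ yields a neighbourhood $U\subseteq \mathrm{int}\,K$ on which each $\psi_j$ moves by less than $\varepsilon/3$ in $e$; a three-term estimate then gives $e(\varphi(x),\varphi(x_0))<\varepsilon$ for all $x\in U$ and all $\varphi\in\Phi$. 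If instead $X$ is first countable, I argue by contradiction: failure of equicontinuity at $x_0$ produces, along a countable neighbourhood base, points $x_k\to x_0$ and $\varphi_k\in\Phi$ with $e(\varphi_k(x_k),\varphi_k(x_0))>\varepsilon$; the set $K=\{x_0\}\cup\{x_k:k\in\N\}$ is compact, total boundedness for $\rho_{K,e}$ forces infinitely many $\varphi_k$ into one ball around some $\psi_j$, and the continuity of $\psi_j$ at $x_0$ (so $e(\psi_j(x_k),\psi_j(x_0))\to0$) contradicts the lower bound via the same triangle estimate.

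The main obstacle is the second step of the forward direction, the coincidence $\tau_p=\tau_c$ on the equicontinuous hull $\Psi$: this is where equicontinuity is genuinely used, and where pointwise control at finitely many points is upgraded to uniform control over an entire compact set, carried out purely with the gauge $E$ because $Y$ is only a uniform space and need be neither metrisable nor Hausdorff. A secondary technical point is that compactness is used throughout in the weak "closure is compact" sense, so I deliberately avoid invoking "compact $\Rightarrow$ closed" and rely only on the separation-free fact that a closed subset of a compact space is compact. In the converse, the delicate point is manufacturing a single compact set $K$ adapted to $x_0$—supplied by a compact neighbourhood under local compactness and by $\{x_0\}\cup\{x_k\}$ under first countability—so that total boundedness can be brought to bear at the point $x_0$.
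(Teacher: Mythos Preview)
The paper does not supply its own proof of this theorem; it is quoted verbatim as a reference result from \cite{SCHECHTER1997}, 18.35, and used as a black box in the appendix. Your argument is correct and is essentially the classical proof one finds in Schechter or Kelley: embed $C(X,Y)$ in the product $Y^X$, use Tychonoff on $\prod_x\overline{\Phi(x)}$ to get pointwise compactness, and then exploit equicontinuity to upgrade pointwise convergence to uniform convergence on compacta; for the converse, reduce to total boundedness with respect to the gauge $\{\rho_{K,e}\}$ and produce the relevant compact $K$ near $x_0$ either via a compact neighbourhood or via a convergent sequence. There is nothing to compare against in the paper itself.

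One minor remark on your write-up of (1b) in the converse: you assert $\overline{\Phi(x)}\subseteq \mathrm{ev}_x(\overline{\Phi}^{\tau_c})$, but without Hausdorffness of $Y$ the compact image $\mathrm{ev}_x(\overline{\Phi}^{\tau_c})$ need not be closed. Since elsewhere you carefully avoid ``compact $\Rightarrow$ closed'', it would be cleaner either to adopt the convention that ``relatively compact'' means ``contained in a compact set'' (as Schechter does), or to note that uniform spaces in this reference are not assumed separated and adjust the statement accordingly. This is a convention issue rather than a mathematical gap.
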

\section{Generalized Skorokhod Embedding and Its Application} \label{appendix:General Skorokhod and Application}
    \subsection{Generalized Skorokhod Embedding Theorem}
        We assume, that $(\mathcal{X}, \tau)$ is a topological space with the only assumption, that
        \begin{equation}
            \begin{aligned}
                &\text{there exists a countable family $\left\{f_i: \mathcal{X} \rightarrow[-1,1]\right\}_{i \in I}$}\\
                &\text{of $\tau$-continuous functions, which separate points of $\mathcal{X}$.}
            \end{aligned} \label{eq:condition from Jakubowski}
        \end{equation}
        Under this condition, \cite{Jakubowski1998} proved, that the following version of the Skorokhod embedding theorem holds.
        \begin{theorem}\label{Theorem:JakubowskisSkorokhodEmbedding}
            Let $(\mathcal{X}, \tau)$ be a topological space satisfying \eqref{eq:condition from Jakubowski} and let $(X_i)_{i\in\N}$ be $\mathcal{X}$-valued random variables. Suppose for each $\varepsilon>0$ there exists a compact set $K_{\varepsilon} \subset \mathcal{X}$ such that for all $n\in\N$
            \begin{equation*}
                \P\left(X_n \in K_{\varepsilon}\right)>1-\varepsilon,
            \end{equation*}
            Then one can find a subsequence $(X_{n_k})_{k \in \mathbb{N}}$ and $\mathcal{X}$-valued random variables $(Y_k)_{k\in\N_0}$ defined on $([0,1], \mathcal{B}_{[0,1]},dt)$ such that for all $k\in\N$
            \begin{equation*}
                X_{n_k} \sim Y_k,
            \end{equation*}
            and for all $\omega\in[0,1]$ and $k\to\infty$
            \begin{equation*}
                Y_k(\omega) \longrightarrow_{\tau} Y_0(\omega).
            \end{equation*}
        \end{theorem}
    \subsection{Application}
        We need to precisely define our spaces and their topologies, we equip $\Omega_1=L^2([0,T],V)$ with its weak topology
        \begin{equation*}
            \tau_1=\sigma\left(\{f:L^2([0,T],V)\to\R\middle|f\text{ linear and bounded}\}\right),
        \end{equation*}
        $\Omega_2=C([0,T],H_{\text{weak}})$ with the topology of of uniform convergence from Definition \ref{definition: topology of uniform convergence on S} denoted $\tau_2$, which is defined as $H$ equipped with its weak topology is a uniform space by Theorem \ref{theorem:initial topologies are uniform topologies}, and $\Omega_3=L^2([0,T],H)$ with its norm topology denoted $\tau_3$.\\
        For $\Omega_1$ and $\Omega_3$ the property \eqref{eq:condition from Jakubowski} is clear (cf. also \cite{Jakubowski1998} Theorem 4). For $\Omega_2$ we notice, that for $(t_n)_{n\in\N}$ dense in $[0,T]$ and $(h_n)_{n\in\N}$ dense in $H_{weak}$
        \begin{equation*}
            \{f_{n,m}:\Omega_2\to [-1,1],g\mapsto 1\wedge\langle h_n, g(t_n)\rangle_H\vee-1\mid n,m\in \N \}
        \end{equation*}
        is a countable separating family of continuous functions (point evaluations are continuous) of $\Omega_2$. Indeed, if $g_1(t)\neq g_2(t)$ the continuity and uniqueness of weak limits implies that there is an $N\in \N$ such that $g_1(t_N)\neq g_2(t_N)$ and by the density there is an $M\in \N$ such that $\langle h_M, g_1(t_N)\rangle_H\neq \langle h_M, g_2(t_N)\rangle_H$.

\setcitestyle{numbers}
\bibliographystyle{unsrtnat}
\bibliography{refs}

\end{document}